\newtheorem{thm}{Theorem}
\newtheorem{cor}{Corollary}
\newtheorem{example}{Example}
\newtheorem{lem}{Lemma}
\newtheorem{prop}{Proposition}
\newcommand{\resp}{{\it resp. }}
\renewcommand{\phi}{\varphi}
\renewcommand{\cong}{\simeq}
\newcommand{\ind}{{\rm ind }}
\newcommand{\dif}{{\rm d }}
\newcommand{\ndeg}{{\rm ndeg }}
\newcommand{\ql}{{\rm ql}}
\newcommand{\nd}{{\rm ndeg }}
\newcommand{\M}{\Bbb M}
\newcommand{\F}{\Bbb F}
\begin{document}

\title[]{5-dimensional minimal quadratic and bilinear forms over function fields of conics}

\author{Adam Chapman}
\address{School of Computer Science, Academic College of Tel-Aviv-Yaffo, Rabenu Yeruham St., P.O.B 8401 Yaffo, 6818211, Israel}
\email{adam1chapman@yahoo.com}

\author{Ahmed Laghribi}
\address{Univ. Artois, UR 2462, Laboratoire de Math{\'e}matiques de Lens (LML), F-62300 Lens, France}
\email{ahmed.laghribi@univ-artois.fr}

\begin{abstract}
Over a field of characteristic $2$, we give a complete classification of quadratic and bilinear forms of dimension $5$ that are minimal over the function field of an arbitrary conic. This completes the unique known case due to Faivre concerning the classification of minimal quadratic forms of dimension $5$ and type $(2,1)$ over function fields of nonsingular conics.
\end{abstract}

\date{\today}

\keywords{Quadratic (bilinear) form, function field of a conic, minimal form, (quasi-)Pfister neighbor.}
\subjclass{11E04, 11E81}

\maketitle

\section{Introduction}

Let $F$ be a field of characteristic $2$ and $K/F$ a field extension. An anisotropic $F$-form (quadratic or bilinear) $\phi$ is called {\it $K$-minimal} if $\phi_K$ is isotropic and any form $\psi$ dominated by $\phi$, such that $\dim \psi <\dim \phi$, remains anisotropic over $F$ ($\dim \phi$ denotes the dimension of $\phi$). We refer to Section \ref{background} for the definition of the domination relation which is more refined than the subform relation and is necessary when we take into account singular quadratic forms. Let us mention that the minimality for bilinear forms is equivalent to that of totally singular quadratic forms (Corollary \ref{c1}). Henceforth, we will restrict ourselves on the minimality for quadratic forms.

In the case of a quadratic extension $K/F$, any $K$-minimal form should be of dimension $2$. Obviously, the same conclusion is true when $K$ is the function field of a $2$-dimensional quadratic form. When $K$ is the function field of a conic (singular or not), then any $3$-dimensional anisotropic $F$-form which becomes isotropic over $K$ is necessarily $K$-minimal, and there is no $K$-minimal form of dimension $4$. These two facts combine many references that we summarize below:
\begin{enumerate}
\item[(1)] For quadratic forms $\phi$ of dimension $3$, we use \cite[Th. 1.4]{Laghribi2002}.
\item[(2)] For quadratic forms $\phi$ of dimension $4$, we use:
\begin{enumerate}
\item[(i)] \cite[Th. 1.3]{Laghribi2002} for $\phi$ of type $(2,0)$.
\item[(ii)] \cite[Th. 1.4, subsection 1.4]{Laghribi2002} for $\phi$ of type $(1,2)$.
\item[(iii)] \cite[Th. 1.2]{Laghribi:Rehmann2013} for $\phi$ of type $(0,4)$ and $\ndeg_F(\phi)=8$.
\item[(iv)] \cite[Prop. 1.1 and Th. 1.2]{Laghribi:Rehmann2013}) for $\phi$ of type $(0,4)$ and $\ndeg_F(\phi)=4$.
\end{enumerate}
\end{enumerate}

Here $\ndeg_F(\phi)$ denotes the norm degree of a totally singular quadratic form $\phi$ (see Section \ref{qPf}). Still for $K$ the function field of a conic, $5$-dimensional quadratic forms which are $K$-minimal were classified first in characteristic not $2$ by Hoffmann, Lewis and van Geel \cite{HLVG}. Their result has been extended to characteristic $2$ by Faivre in the case of quadratic forms of type $(2,1)$ and nonsingular conics. His result states the following:

\begin{thm} (Faivre \cite[Prop. 5.2.12]{Faivre2006}) Let $\phi$ be an anisotropic $F$-quadratic form of dimension $5$ and type $(2,1)$, and $\tau=b[1, a]\perp \left<1\right>$ an anisotropic $F$-quadratic form of dimension $3$. Then, $\phi$ is $F(\tau)$-minimal iff the three conditions are satisfied:\\(i) $\phi$ is a Pfister neighbor of a $3$-fold Pfister form $\pi$.\\(ii) $\pi\cong \left< \! \left<c, b, a\right] \! \right]$ for a suitable $c\in F^*$.\\(iii) $\ind (C_0(\phi)\otimes C_0(\tau))=4$.
\label{trappel}
\end{thm}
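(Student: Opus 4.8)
The plan is to prove both implications by reducing $F(\tau)$-minimality to a question about which $4$-dimensional forms dominated by $\phi$ become isotropic over $F(\tau)$, and then to control that question by the theory of (quasi-)Pfister forms over function fields of conics together with the index of even Clifford algebras. The first observation I would record is that $\tau$ is a neighbor of the $2$-fold quadratic Pfister form $\pi_\tau := [1,a]\perp b[1,a]\cong\left<\!\left<b,a\right]\!\right]$ (indeed $[1,a]$ represents $1$, so $\left<1\right>$ is dominated by $[1,a]$ and $\dim\tau=3>2$), whence $(\pi_\tau)_{F(\tau)}$ is hyperbolic and $C_0(\tau)$ is a quaternion algebra (namely $[a,b)$), which splits over $F(\tau)$. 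A second reduction I would make is that, to test minimality, it suffices to rule out isotropy over $F(\tau)$ of the $4$-dimensional dominated subforms: any isotropic dominated subform of smaller dimension is contained in such a $4$-dimensional one, which is then itself isotropic. Since $\phi$ has type $(2,1)$, its nonsingular part is $4$-dimensional and its quasilinear part is $1$-dimensional, so a dominated subform has quasilinear part of dimension at most $3$; hence the $4$-dimensional dominated subforms are exactly those of type $(2,0)$ and $(1,2)$, which are governed respectively by \cite[Th.~1.3]{Laghribi2002} and \cite[Th.~1.4 and subsec.~1.4]{Laghribi2002}.

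For the implication from $(i)$--$(iii)$ to minimality I would argue as follows. Condition $(ii)$ gives $\pi\cong\left<\!\left<c\right>\!\right>\otimes\pi_\tau$, so $\pi_{F(\tau)}$ is hyperbolic and its neighbor $\phi_{F(\tau)}$ is isotropic. It then remains to show that no $4$-dimensional form dominated by $\phi$ is isotropic over $F(\tau)$; by the reduction above I would run through the type-$(2,0)$ subform (the nonsingular part $\rho$ of $\phi$) and the type-$(1,2)$ subforms, compute their even Clifford algebras in terms of $C_0(\phi)$, and apply the cited criteria of \cite{Laghribi2002}. In each case the criterion for isotropy over $F(\tau)$ amounts to a splitting condition relating the subform's even Clifford algebra to $C_0(\tau)=[a,b)$; condition $(iii)$, namely that $C_0(\phi)\otimes C_0(\tau)$ is a biquaternion division algebra of index $4$, is exactly what forbids this, so all these subforms stay anisotropic and $\phi$ is minimal.

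For the converse I would start from $F(\tau)$-minimality, so $\phi_{F(\tau)}$ is isotropic while every proper dominated subform stays anisotropic over $F(\tau)$; in particular $\rho_{F(\tau)}$ is anisotropic, so the isotropy of $\phi=\rho\perp\left<d\right>$ forces $\rho_{F(\tau)}$ to represent $d$. Applying the subform (Cassels--Pfister type) theorem over the function field of the conic $\tau$ to this representation, and using minimality, I would deduce that $\phi$ is a Pfister neighbor, and the dimension count $\dim\pi/2<5\le\dim\pi$ with $\dim\pi$ a power of $2$ forces $\dim\pi=8$, i.e.\ $\pi$ is $3$-fold; this is $(i)$. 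Since $\phi$ is a neighbor of $\pi$, isotropy of $\phi_{F(\tau)}$ upgrades to hyperbolicity of $\pi_{F(\tau)}$, and by the divisibility criterion for Pfister forms over function fields of Pfister forms this holds iff $\pi_\tau$ divides $\pi$, whence $\pi\cong\left<\!\left<c,b,a\right]\!\right]$ for some $c\in F^*$; this is $(ii)$. Finally I would prove $(iii)$ by contraposition: if $\ind(C_0(\phi)\otimes C_0(\tau))\le 2$, then the Clifford computations of the previous paragraph exhibit a $4$-dimensional form dominated by $\phi$ that already becomes isotropic over $F(\tau)$, contradicting minimality; hence the index equals $4$.

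The main obstacle is the equivalence, underlying both directions, between $\ind(C_0(\phi)\otimes C_0(\tau))=4$ and the anisotropy over $F(\tau)$ of every $4$-dimensional form dominated by $\phi$. This rests on computing the even Clifford algebras of the type-$(2,0)$ and type-$(1,2)$ subforms, relating them in the Brauer group to $C_0(\phi)$ and to the quaternion algebra $C_0(\tau)=[a,b)$, and keeping careful track of the characteristic-$2$ domination relation---in particular how the quasilinear slot $\left<d\right>$ of $\phi$ can be absorbed into, or extracted from, the nonsingular part---so that the list of $4$-dimensional subforms to be tested is genuinely complete. This is the characteristic-$2$, conic-function-field analogue of the classical link between biquaternion division algebras and anisotropic Albert forms.
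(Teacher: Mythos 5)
The paper itself gives no proof of Theorem \ref{trappel}: it is recalled from Faivre's thesis, and Proposition \ref{propg}(3) isolates its key implication (minimal $\Rightarrow$ Pfister neighbor of $\left< \! \left<c, b, a\right] \! \right]$), whose known proof rests on the excellence of $F(\tau)/F$ for a nonsingular conic \cite[Cor. 5.7]{Hoffmann:Laghribi2006}. Your skeleton --- reduce minimality to dominated subforms of dimension $3$ and $4$, observe that only types $(2,0)$ and $(1,2)$ can occur, translate their isotropy over $F(\tau)$ through \cite[Th. 1.3, Th. 1.4]{Laghribi2002} into Clifford-algebra conditions, and confront these with (iii) --- is the right shape, and it matches both Faivre's strategy and the paper's parallel proof of Theorem \ref{tp1}. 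But the two load-bearing steps are missing. First, your derivation of (i) from minimality would fail as written: from $d\in D_{F(\tau)}(\rho)$ no ``Cassels--Pfister type'' representation theorem over the function field of a conic yields a Pfister-neighbor property. What is actually needed is the descent of $(\phi_{F(\tau)})_{an}$ to $F$ --- precisely the excellence of $F(\tau)/F$, as the paper emphasizes when discussing Proposition \ref{propg} --- or else an isotropy refinement in the style of Propositions \ref{p1} and \ref{pinsc}, which the authors prove exactly because excellence fails in their singular cases. You invoke neither and supply no substitute.

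Second, you label the equivalence between $\ind (C_0(\phi)\otimes C_0(\tau))=4$ and the anisotropy over $F(\tau)$ of all dominated subforms ``the main obstacle'' and defer it; but that equivalence \emph{is} the theorem, and executing it runs into two characteristic-$2$ phenomena your sketch glosses over. (a) ``The nonsingular part $\rho$ of $\phi$'' is not well defined: cancellation of the quasilinear summand $\left<d\right>$ fails in characteristic $2$, so $\phi$ admits many pairwise non-isometric nonsingular complements, all with the same Brauer class but with varying Arf invariant --- and the isotropy criterion of \cite[Th. 1.3]{Laghribi2002} depends on the Arf invariant, so the type-$(2,0)$ test must range over this whole family. (b) In the type-$(1,2)$ and $3$-dimensional cases, isotropy yields $e\tau \prec \phi$, i.e. $\phi \cong eb[1,a]\perp f[1,g]\perp \left<h\right>$ with $e\in D_F(f[1,g]\perp\left<h\right>)$; deducing a contradiction with (iii) is not a formal ``splitting condition'' but a linkage computation: $C_0(\phi)\otimes [a,b)\sim [a, eh)\otimes [g, fh)$, and writing $e=fu+h\lambda^2$ with $u\in D_F([1,g])$ nonzero gives $eh=m+(h\lambda)^2$ for $m=fuh$, so $F(\sqrt{eh})=F(\sqrt{m})$ is a common \emph{inseparable} quadratic subfield of the two factors (using $[g, fh)\cong [g, m)$ since $u$ is a norm of $[1,g]$), forcing index $\leq 2$; the case $u=0$ is immediate. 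Symmetrically, your contrapositive for (iii) must distinguish separable from inseparable linkage of $C_0(\phi)$ and $[a,b)$ --- inequivalent notions in characteristic $2$ --- to produce, respectively, an isotropic nonsingular complement or a dominated multiple of $\tau$, and it needs $\ind C_0(\phi)=2$ first, via \cite[Prop. 3.2]{Laghribi2002} and anisotropy. Until these computations are carried out, your text is an accurate table of contents for Faivre's proof rather than a proof.
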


Moreover, for function fields of nonsingular conics, Faivre proved the following general result: 

\begin{prop} Let $\tau=b[1, a]\perp \left<1\right>$ be an anisotropic $F$-quadratic form of type $(1, 1)$, and $\phi$ an anisotropic $F$-quadratic form. Suppose that $\phi$ is $F(\tau)$-minimal, then we have:\\(1) (\cite[Prop. 5.2.1]{Faivre2006}) $\phi$ is singular but not totally singular.\\(2) (\cite[Prop. 5.2.8]{Faivre2006}) If $\phi$ is of type $(1, l)$, then $l$ is odd.\\(3) \cite[Prop. 5.2.11]{Faivre2006}) If $\dim \phi=5$, then $\phi$ is a Pfister neighbor of $3$-fold Pfister form $\left< \! \left<c, b, a\right] \! \right]$ for some $c\in F^*$.
\label{propg}
\end{prop}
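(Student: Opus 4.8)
The plan is to combine two ingredients. The first is the family of subform theorems describing when an anisotropic $F$-form becomes isotropic over $F(\tau)$: since $\tau$ is a $3$-dimensional Pfister neighbor of the anisotropic $2$-fold quadratic Pfister form $\pi_0:=\left< \! \left<b, a\right] \! \right]$, the isotropy over $F(\tau)$ of a small form is controlled by the presence of a subform similar to $\tau$ or to $\pi_0$. The second is minimality, which forbids any proper dominated subform of $\phi$ from becoming isotropic over $F(\tau)$, together with the facts recalled in the introduction that no $F(\tau)$-minimal form has dimension $4$. For (1), that $\phi$ is \emph{not totally singular} uses that $\tau$ is nonsingular, so $F(\tau)/F$ is separable (regular); a separable extension is linearly disjoint from the purely inseparable closure, hence $F^2$-linear independence of the entries of a diagonal form is preserved, and every anisotropic totally singular form stays anisotropic over $F(\tau)$. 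Thus a totally singular $\phi$ could never become isotropic there, against minimality. That $\phi$ is \emph{singular} is the contrapositive of the subform theorem: a nonsingular $\phi$ with $\phi_{F(\tau)}$ isotropic would contain a proper subform similar to $\tau$ or $\pi_0$, already isotropic over $F(\tau)$; this contradicts minimality once $\dim\phi\geq 5$, while the remaining even-dimensional cases $\dim\phi\in\{2,4\}$ are excluded directly (a $2$-dimensional form contains neither $\tau$ nor $\pi_0$ and stays anisotropic, and there is no $4$-dimensional minimal form). Hence $\phi$ has mixed type $(r,s)$ with $r,s\geq 1$.

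For (2) write $\phi\cong\rho\perp Q$ with $\rho$ the binary nonsingular part and $Q=\ql(\phi)$ of dimension $l$. By the argument of (1), $Q$ remains anisotropic over $F(\tau)$, so any isotropy of $\phi_{F(\tau)}$ must genuinely involve $\rho$. The plan is to combine the isotropy criterion for forms of type $(1,\ast)$ over $F(\tau)$ — the dimension $4$, type $(1,2)$ instance of which is recalled in the introduction — with minimality: assuming $l$ even, I would split off from $Q$ a binary quasilinear piece whose norm is compatible with $\rho$, producing a proper dominated subform of $\phi$ still isotropic over $F(\tau)$ and so contradicting minimality. The evenness of $l$ is precisely what makes such a parity-driven descent possible, forcing $l$ odd.

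For (3), let $\dim\phi=5$; by (1) and (2), $\phi$ has type $(2,1)$ or $(1,3)$. Since $\phi_{F(\tau)}$ is isotropic and $\tau$ is a neighbor of $\pi_0$, the splitting of $\phi$ is governed by $\pi_0$, and minimality excludes $\phi$ containing a copy of $\tau$ or of $\pi_0$ (either would yield a smaller isotropic dominated subform, as in (1)). The remaining possibility, which I would establish, is that $\phi$ occurs as a neighbor of the $3$-fold Pfister form $\pi:=\pi_0\perp c\,\pi_0=\left< \! \left<c, b, a\right] \! \right]$ for a suitable $c\in F^*$: one checks $\dim\phi=5>4=\tfrac12\dim\pi$ and that $\phi$ is similar to a subform of $\pi$, the scalar $c$ being read off the isotropy relation through the even Clifford algebra. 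This is the structural content behind the index condition $\ind(C_0(\phi)\otimes C_0(\tau))=4$ of the type $(2,1)$ classification in Theorem \ref{trappel}.

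The main obstacle is (3), and to a lesser extent the parity step in (2): converting the bare data ``$\phi_{F(\tau)}$ isotropic and $\phi$ minimal'' into the explicit Pfister-neighbor structure. This requires the precise characteristic-$2$ isotropy criterion over the function field of a nonsingular conic for the mixed types $(2,1)$ and $(1,3)$, and careful use of the refined domination relation to rule out every non-neighbor configuration, each of which would supply a smaller dominated isotropic subform. The delicate bookkeeping is that deleting coordinates from the singular part can alter the type of the form, and it is ultimately the Clifford/Arf invariant computation that pins down the third Pfister slot $c$.
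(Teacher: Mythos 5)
Your proposal never invokes the one ingredient the paper identifies as the engine of Faivre's proof: the paper does not reprove Proposition \ref{propg} but quotes it from \cite{Faivre2006}, noting explicitly that the proof ``is mainly based on the fact that the extension given by the function field of a nonsingular conic is excellent \cite[Cor. 5.7]{Hoffmann:Laghribi2006}'' together with arguments in the style of \cite{HLVG}. The substitute you use instead does not exist. Concretely, in the ``$\phi$ is singular'' half of (1) you assert that any nonsingular anisotropic $\phi$ with $\phi_{F(\tau)}$ isotropic contains a proper (dominated) subform similar to $\tau$ or to $\pi_0=\left< \! \left<b, a\right] \! \right]$. The quoted isotropy criteria (\cite{Laghribi2002}, \cite{Laghribi:Rehmann2013}, \cite{Dolphin:Laghribi2017}) only cover dimension $\leq 5$, and even there their conclusion in the non-neighbor case is a Witt equivalence $\phi \sim x\pi \perp \phi'$, not a subform statement; in dimension $\geq 6$ the claim is false as stated --- for instance, completing a $5$-dimensional $F(\tau)$-minimal form $R\perp \left<1\right>$ to the nonsingular form $R\perp [1,t]$ yields a $6$-dimensional anisotropic form that is isotropic over $F(\tau)$ yet need neither dominate $x\tau$ nor contain $x\pi_0$. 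The genuine argument descends the anisotropic part of $\phi_{F(\tau)}$ to $F$ by excellence, places the difference in the Witt kernel $\pi_0\otimes W(F)$, and extracts the contradiction with minimality from that equivalence; without this, your case $\dim\phi\geq 6$ is simply open. (Your separability argument for the ``not totally singular'' half of (1) is correct and standard.)

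Parts (2) and (3) are programmatic rather than proved: ``I would split off \dots a binary quasilinear piece'', ``the remaining possibility, which I would establish''. The parity descent in (2) and the trichotomy in (3) (dominates $x\tau$, contains $x\pi_0$, or is a Pfister neighbor of $\left< \! \left<c, b, a\right] \! \right]$) are precisely the content of the proposition, and they are where excellence does the work in Faivre's proof --- or, for the type $(1,3)$ part of (3), where the paper's own alternative route goes through Proposition \ref{pinsc}, proved from \cite[Th. 1.1]{Dolphin:Laghribi2017} together with the Hauptsatz; nothing playing either role appears in your text. Two further inaccuracies: the scalar $c$ is pinned down not by ``the Clifford/Arf invariant computation'' but by the hyperbolicity of $\pi$ over $F(\tau)$, which forces $\pi_0$ to divide $\pi$; and for type $(1,3)$ the even Clifford algebra cannot detect $c$ at all, since by Corollary \ref{coralg} it splits over the relevant extension --- which is exactly why Theorem \ref{tp2} replaces the index condition of Theorem \ref{trappel} by a representation-theoretic one. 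In sum, you have located the right landmarks, but the one step you argue in detail rests on a false subform principle, and the decisive steps are missing.
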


The proof of this proposition is mainly based on the fact that the extension given by the function field of a nonsingular conic is excellent \cite[Cor. 5.7]{Hoffmann:Laghribi2006} and some arguments similar to those developed by Hoffmann, Lewis and Van Geel in characteristic not $2$ \cite{HLVG}. This excellence result is no longer true for singular conics as it was recently proved by Laghribi and Mukhija \cite{Laghribi:Mukhija2024}. 

In our knowledge, no classification of minimal quadratic forms of type $(2,1)$ over function fields of singular conics, or of type $(1,3)$ over function fields of arbitrary conics are known. Our aim in this paper is to complete these open cases. The first result in this sense is the following theorem that concerns minimal quadratic forms of type $(2,1)$ over function fields of singular conics.
\begin{thm}
Let $\phi$ be an anisotropic $F$-quadratic form of dimension $5$ and type $(2,1)$, and $\tau=\left<1, a, b\right>$ an ansiotropic totally singular $F$-quadratic form of dimension $3$. Then, $\phi$ is $F(\tau)$-minimal iff the three  conditions are satisfied:\\(i) $\phi$ is a Pfister neighbor of a $3$-fold Pfister form $\pi$.\\(ii) $\pi\cong \left< \! \left<a, b,c\right] \! \right]$ for some $c\in F^*$.\\(iii) $\ind C_0(\phi)_{F(\sqrt{a}, \sqrt{b})}=2$.
\label{tp1}
\end{thm}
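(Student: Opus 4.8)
The plan is to prove Theorem~\ref{tp1} by mirroring the structure of Faivre's nonsingular result (Theorem~\ref{trappel}), with the crucial modification that the invariant governing minimality is no longer an index over $F$ but rather an index after the quadratic extension $F(\sqrt{a},\sqrt{b})$ which is forced on us by the totally singular nature of $\tau=\left<1,a,b\right>$. First I would establish the necessity of condition (i). Since $\tau$ is a $3$-dimensional totally singular quadratic form, the function field $F(\tau)$ behaves like the function field of a quasi-Pfister neighbor; by the minimality hypothesis $\phi_{F(\tau)}$ is isotropic while every proper subform stays anisotropic, and I would use the characterization of isotropy over such function fields together with the dimension count (via the norm degree $\ndeg$ machinery referenced in Section~\ref{qPf}) to deduce that $\phi$ becomes isotropic over $F(\tau)$ only if its anisotropic part is controlled by a $3$-fold Pfister form. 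The fact that no $4$-dimensional minimal forms exist over function fields of conics, combined with the type $(2,1)$ hypothesis, should pin down that $\phi$ is a Pfister neighbor of some $3$-fold Pfister form $\pi$, giving (i).

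Next I would identify $\pi$ to obtain condition (ii). Writing the type-$(2,1)$ form $\phi$ and its Clifford-type invariant $C_0(\phi)$, I would track how the two slots $a,b$ coming from $\tau$ must appear in the quasilinear part of $\pi$. The key computation is that isotropy of $\phi_{F(\tau)}$ forces $\tau$ to be (similar to) a quasi-Pfister neighbor of the pure part of $\pi$, which in characteristic $2$ means $\pi\cong\left<\!\left<a,b,c\right]\!\right]$ for a suitable $c\in F^*$; here the slots $a$ and $b$ enter through the totally singular factor $\left<\!\left<a,b\right>\!\right>=\left<1,a,b,ab\right>$, exactly as they enter $\tau$. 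This is the analogue of Faivre's condition (ii), and the determination of $c$ proceeds by comparing the total quadratic form to $\pi$.

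The substantive step is condition (iii), and I expect the index computation to be the main obstacle. Assuming (i) and (ii), I would translate $F(\tau)$-minimality of $\phi$ into a statement about which proper subforms of $\phi$ remain anisotropic. The difference from the nonsingular case is that the relevant Clifford invariant $C_0(\phi)$ must be restricted to $F(\sqrt{a},\sqrt{b})$ rather than combined with $C_0(\tau)$ over $F$: since $\tau$ is totally singular it has no even Clifford algebra in the usual sense, so the obstruction to further isotropy is measured by $\ind C_0(\phi)_{F(\sqrt{a},\sqrt{b})}$. I would show that $\ind C_0(\phi)_{F(\sqrt{a},\sqrt{b})}=2$ is equivalent to the non-isotropy of all proper subforms by examining the $4$-dimensional subforms of $\phi$: a drop of this index to $1$ would correspond to one of these subforms already becoming isotropic over $F$ (contradicting minimality), whereas index $2$ is exactly the threshold at which $\phi$ itself, but no proper subform, becomes isotropic over $F(\tau)$.

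The hardest and most delicate part will be the "only if'' direction of (iii), namely showing that minimality forces the index to be exactly $2$ and not $4$, and conversely that index $2$ guarantees anisotropy of all proper subforms. This requires a careful analysis of how the even Clifford algebra of a type-$(2,1)$ Pfister neighbor splits over the biquadratic extension $F(\sqrt{a},\sqrt{b})$, and a matching of the Witt-index behavior of $\phi$ over $F(\tau)$ against the behavior of its subforms. I would handle this by invoking the classification results for $3$- and $4$-dimensional forms listed as items (1)--(2) in the introduction to rule out premature isotropy of subforms, and then use a norm-degree or Clifford-index argument to certify that index $2$ is both necessary and sufficient. The loss of excellence for singular conics (noted after Proposition~\ref{propg}) means I cannot import Faivre's excellence-based arguments directly, so this final step must be carried out by hand.
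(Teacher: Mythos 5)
Your roadmap reproduces the paper's architecture at the top level (minimality $\Rightarrow$ Pfister neighbor; $\pi \cong \left< \! \left<a, b, c\right] \! \right]$ from isotropy of $\pi_{F(\tau)}$; condition (iii) via the even Clifford algebra over $F(\sqrt{a},\sqrt{b})$), but at each substantive step the mechanism you name is either absent or wrong. For (i), you appeal to the norm degree machinery, but $\ndeg$ is defined only for totally singular forms, and $\phi$ has type $(2,1)$; that tool cannot touch it. What the paper actually proves is a refined isotropy statement (Proposition \ref{p1}): if $\phi$ is \emph{not} a Pfister neighbor and $\phi_{F(\tau)}$ is isotropic, then $\tau$ is dominated by $\phi$ up to a scalar. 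This rests on the decomposition $\phi \sim x\pi \perp \phi'$ from \cite[Th. 1.2(3)]{Laghribi2002}, followed by a Witt-group computation: adding $\theta = y\left< \! \left<a, b\right> \! \right>$ to both sides, using roundness of (quasi-)Pfister forms to collapse $x\pi \perp \theta$, extracting a common $2$-dimensional form dominated by $R$ and $\theta$, and invoking \cite[Th. 1.2(1)]{Laghribi:Rehmann2013} to conclude $\tau$ is similar to the resulting $3$-dimensional form. Nothing in your sketch produces this, and without it both Corollary \ref{cor1} and the endgame of the necessity direction are unavailable.

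For (iii), your framing of the dichotomy as ``index $2$ versus index $4$'' is off target: once $\phi$ is an anisotropic Pfister neighbor, \cite[Prop. 3.2]{Laghribi2002} already gives $\ind C_0(\phi) = 2$ over $F$, so over $F(\sqrt{a},\sqrt{b})$ the only question is split versus nonsplit --- a fact you never invoke. More seriously, you offer no mechanism for the hard direction. The paper's necessity argument assumes $C_0(\phi)_{F(\sqrt{a},\sqrt{b})}$ split, writes $x\pi \cong R \perp [1,r] \perp s[1,t]$ so that $C(R) = [t,s) \sim C_0(\phi)$ (via \cite[Lemma 2]{MTW}), applies Baeza's theorem \cite[Cor. 4.16, p. 128]{B} --- a nonsingular form hyperbolic over $F(\sqrt{a},\sqrt{b})$ is Witt-equivalent to $\left<1,a\right>\otimes\phi_1 \perp \left<1,b\right>\otimes\phi_2$ --- to land in $I^3_qF$, uses the Pfister/Hauptsatz step to get $\phi \sim a[1,k_1] \perp b[1,k_2] \perp \left<1\right> \perp \rho$ with $\rho \in GP_3F$ isotropic over $F(\tau)$, and then reruns the Proposition \ref{p1} argument to contradict minimality. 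Your ``norm-degree or Clifford-index argument'' names no tool that bridges splitting of $C_0(\phi)$ over the biquadratic extension to the existence of a small dominated form isotropic over $F(\tau)$. The sufficiency direction you sketch is closer to the paper (both use the $3$- and $4$-dimensional classifications to force $x\tau \prec \psi$), but the decisive computation is still missing: from $x\tau \cong \left<u,v,w\right>$ and $\phi \cong u[1,p] \perp v[1,q] \perp \left<w\right>$ one gets $C_0(\phi) \sim [p,uw) \otimes [q,vw)$ with $uw, vw$ squares in $F(\sqrt{a},\sqrt{b})$, hence split --- contradicting (iii). Finally, your claim that an index drop corresponds to a subform isotropic ``over $F$'' is a non sequitur: every form dominated by the anisotropic $\phi$ is anisotropic over $F$; the entire question is isotropy over $F(\tau)$.
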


Concerning the classification of minimal $5$-dimensional quadratic forms of type $(1,3)$ over function fields of singular conics, we prove the following result.

\begin{thm}
Let $\phi$ be an anisotropic $F$-quadratic form of type $(1,3)$, and $\tau=\left<1, a, b\right>$ an anisotropic totally singular quadratic form of dimension $3$. Then, $\phi$ is $F(\tau)$-minimal iff the three conditions are satisfied:\\(i) $\phi$ is a Pfister neighbor of a $3$-fold Pfister form $\pi$.\\(ii) $\pi\cong \left< \! \left<a, b,c\right] \! \right]$ for some $c\in F^*$.\\(iii) For any $e\in F^*$, we have either $i_d(e\tau \perp \ql(\phi))\leq 1$ or ($i_d(e\tau \perp \ql(\phi))=2$ and $(D_F(\phi)\cap D_F(e\tau))\setminus D_F(\ql(\phi))=\emptyset$).
\label{tp2}
\end{thm}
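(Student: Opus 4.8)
The plan is to treat (i),(ii) as the isotropy/structure input and to isolate (iii) as the exact obstruction to minimality. Granting the singular-conic analogue of Proposition \ref{propg} (which forces an $F(\tau)$-minimal form of dimension $5$ to be a Pfister neighbor of $\left< \! \left<a,b,c\right] \! \right]$), conditions (i),(ii) are necessary; conversely they make $\phi_{F(\tau)}$ isotropic, because over $F(\tau)$ the form $\tau$ is isotropic, hence so is the quasi-Pfister form $\langle 1,a,b,ab\rangle$ containing it, which forces the bilinear $2$-fold Pfister factor of $\pi$ to be metabolic and $\pi$ itself hyperbolic, so its neighbor $\phi$ is isotropic. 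The remaining content is: assuming (i),(ii), prove $\phi$ is $F(\tau)$-minimal iff (iii). Since minimality means no proper dominated subform becomes isotropic over $F(\tau)$, and since every dominated subform of dimension $\leq 3$ is dominated by a $4$-dimensional one (anisotropy passing down), I reduce to the $4$-dimensional restrictions $\phi|_W$. As the polar form $b_\phi$ has rank $2$, each such restriction is of type $(1,2)$ or totally singular of type $(0,4)$; writing $\phi\cong\rho\perp\ql(\phi)$ with $\rho$ nonsingular, the type-$(0,4)$ restrictions are exactly the forms $\langle d\rangle\perp\ql(\phi)$ with $d\in D_F(\rho)$, up to isometry.

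The core computation handles the totally singular subforms. I will invoke the quasilinear subform theorem of Hoffmann and Laghribi: for anisotropic totally singular $\sigma$ with $\dim\sigma\geq\dim\tau$, the form $\sigma_{F(\tau)}$ is isotropic iff $e\tau$ is a subform of $\sigma$ for some $e\in F^*$. Letting $V_\tau$ and $V_{ql}$ be the $F^2$-spans in $F$ of the value sets of $\tau$ and of $\ql(\phi)$, a direct count yields $i_d(e\tau\perp\ql(\phi))=\dim_{F^2}(eV_\tau\cap V_{ql})$. Applying the theorem to $\sigma=\langle d\rangle\perp\ql(\phi)$ and letting $d$ range over $D_F(\rho)$, so that the value sets $D_F(\sigma)$ sweep out $D_F(\phi)$, the condition ``$e\tau$ is a subform of some such $\sigma$'' splits into: the borderline $i_d=3$ case, which means $\ql(\phi)\cong e\tau$ and is excluded since then $\ql(\phi)_{F(\tau)}$ is already isotropic; and the $i_d=2$ case, which requires a value of $e\tau$ lying in $D_F(\phi)\setminus D_F(\ql(\phi))$. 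Negating ``some type-$(0,4)$ subform is $F(\tau)$-isotropic'' therefore reproduces, clause by clause, exactly condition (iii).

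It remains to rule out, under (iii), any isotropy coming from the type-$(1,2)$ subforms. Here I use that $F(\tau)=F(t)\bigl(\sqrt{at^2+b}\bigr)$ is purely inseparable of degree $2$ over $F(t)$: expanding a vector as $v+w\sqrt{at^2+b}$ shows that a form $q$ over $F(t)$ is isotropic over $F(\tau)$ iff there is a pair $v,w$ with $b_q(v,w)=0$ and $q(v)=(at^2+b)\,q(w)$. In particular the nonsingular part $\rho$ stays anisotropic (for a binary norm form this is the identity $\wp(F(\tau))\cap F(t)=\wp(F(t))$). For a type-$(1,2)$ subform $\psi=\rho'\perp\mu$ the rank-$2$ polar form forces one foot of the isotropic plane into the radical $\mu$, so that $\psi_{F(\tau)}$ is isotropic iff $\rho'\perp\mu\perp(at^2+b)\mu$ is isotropic over $F(t)$; moreover such a witness, being of bounded $t$-degree, is confined to a $4$-dimensional $F$-rational subspace exactly when minimality is to fail. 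Using (i),(ii) to write $\rho\cong\lambda[1,c]$ and specialising in $t$, I expect to show that any such isotropy descends to a shared value between $\ql(\phi)$ and a scalar multiple of $\tau$, hence is already forbidden by (iii); conversely, if (iii) fails the explicit solution furnished by the subform theorem is confined to a $4$-dimensional subspace and breaks minimality.

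The main obstacle is precisely this last step. The danger is that the nonsingular part $\rho\cong\lambda[1,c]$ interacts with $\tau$ to create isotropy of some $\psi=\rho'\perp\mu$ that is invisible to the purely quasilinear condition (iii) (which involves neither $c$ nor $\rho$). Overcoming it requires combining the inseparable rigidity of nonsingular forms with the Pfister-neighbor constraint and a Springer-type specialisation in the variable $t$, so as to push every such interaction down to a common value of $\ql(\phi)$ and $e\tau$. The most delicate point is matching the value-set bookkeeping of clause (iii) exactly, namely the appearance of $D_F(\phi)$ rather than $D_F(\rho)$ and the treatment of the borderline $i_d=2$ case.
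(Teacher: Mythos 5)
Your decoding of condition (iii) on the totally singular side is sound and in fact coincides with the paper's mechanism: the paper likewise translates $i_d(e\tau\perp\ql(\phi))=3$ into ``$\ql(\phi)$ similar to $\tau$'' and the case $i_d\geq 2$ together with a common value in $(D_F(\phi)\cap D_F(e\tau))\setminus D_F(\ql(\phi))$ into ``$e\tau$ dominated by $\phi$''; your $F^2$-span computation of the defect index is correct, and the $D_F(\rho)$-versus-$D_F(\phi)$ bookkeeping you worry about resolves itself, since $\left<d\right>\perp\ql(\phi)\cong\left<\rho(v_1)\right>\perp\ql(\phi)$ whenever $d=\rho(v_1)+\ql(\phi)(v_2)$ with $v_1\neq 0$ (anisotropic totally singular forms with the same value space are isometric). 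But two genuine gaps remain, and they are exactly the points where you hedge. The type-$(1,2)$ subforms, which you yourself call ``the main obstacle'', are where the proposal stops being a proof: the inseparable-descent computation over $F(\tau)=F(t)(\sqrt{at^2+b})$ with a $t$-specialisation is never carried out (``I expect to show\dots''), and carrying it out would amount to reproving a known theorem. The paper dispatches this case in one line by citing \cite[Th.~1.4, subsection~1.4]{Laghribi2002}: an anisotropic form of type $(1,2)$ that becomes isotropic over the function field of the conic $\tau$ must dominate $e\tau$ for some $e\in F^*$. Combined with the domination lemma this yields $e\tau\cong\left<x,y,z\right>$ and $\phi\cong x[1,u]\perp\left<y,z,t\right>$, which violates (iii) because then $i_d(e\tau\perp\ql(\phi))\geq 2$ and $x\in(D_F(\phi)\cap D_F(e\tau))\setminus D_F(\ql(\phi))$. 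So the $(1,2)$ subforms are not ``invisible to the purely quasilinear condition (iii)'' as you fear: the cited isotropy theorem forces any such isotropy into precisely the common-value configuration that (iii) forbids, with no interaction between $c$ and $\rho$ left to control.

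The second gap is that you grant the necessity of (i)--(ii) by invoking ``the singular-conic analogue of Proposition \ref{propg}''; no such analogue existed, and proving it is one of the paper's actual contributions. The paper's Proposition \ref{priso} refines \cite[Th.~1.1(3)]{Dolphin:Laghribi2017}: if $\phi$ of type $(1,3)$ is \emph{not} a Pfister neighbor and $\phi_{F(\tau)}$ is isotropic, then $\tau$ is dominated by $\phi$ up to a scalar. Its proof is not formal --- one adds $\theta=y\left< \! \left<a,b\right> \! \right>$ to the Witt equivalence $\phi\sim x\pi\perp\phi'$, uses roundness of (quasi-)Pfister forms to force isotropy of $R\perp\left<z\right>\perp\theta$, and concludes with \cite[Th.~1.2(1)]{Laghribi:Rehmann2013} --- and minimality then forces the Pfister-neighbor property (Corollary \ref{c2}). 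Without this, the forward implication of the theorem is simply unproven. Finally, a caution on a tool you state too broadly: the ``quasilinear subform theorem'' is false for general $\dim\sigma\geq\dim\tau$ --- the paper's Theorem \ref{tp3} produces $5$-dimensional totally singular $K$-minimal forms, i.e., forms isotropic over $K$ that contain no multiple of $\tau$. What is true, and what both you and the paper actually need, is the dimension-$\leq 4$ case \cite[Prop.~1.1 and Th.~1.2]{Laghribi:Rehmann2013}, which suffices for your application since your restrictions have dimension $4$.
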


The classifications given in Theorems \ref{trappel} et \ref{tp1} are based on the even Clifford algebra $C_0(\phi)$ of $\phi$. However, for quadratic forms of type $(1,3)$, another characterization is used in Theorem \ref{tp2}. This is due to the fact that any quadratic Pfister neighbor of type $(1,3)$ has a split even Clifford algebra as we state in Corollary \ref{coralg}.

\begin{prop}
Let $\phi=a_1[1, b_1]\perp \cdots \perp a_n[1, b_n]\perp \left<1, c_1, \cdots, c_s\right>$ be a singular quadratic form such that $n\geq 1$ and $s\geq 1$, and let $K=F(\sqrt{c_1}, \cdots, \sqrt{c_s})$. Then, $C_0(\phi)$ is isomorphic to the $F$-algebra $[b_1, a_1)\otimes_F\cdots \otimes_F|b_n, a_n)\otimes_FK$. In particular, $C_0(\phi)$ has degree $2^n$ as a $K$-algebra.
\label{pexp}
\end{prop}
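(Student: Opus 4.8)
The plan is to construct the isomorphism explicitly on generators of the Clifford algebra, exploiting the fact that in characteristic $2$ the quasilinear generators are central and can be used to shift parity. First I would fix an orthogonal basis $e_1,f_1,\dots,e_n,f_n,g_0,g_1,\dots,g_s$ adapted to the decomposition $\phi=a_1[1,b_1]\perp\cdots\perp a_n[1,b_n]\perp\left<1,c_1,\dots,c_s\right>$, so that $e_i,f_i$ span the $i$-th nonsingular plane and $g_0,\dots,g_s$ span $\ql(\phi)=\left<1,c_1,\dots,c_s\right>$. In $C(\phi)$ this gives $e_i^2=a_i$, $f_i^2=a_ib_i$, $e_if_i+f_ie_i=a_i$, while $g_0^2=1$ and $g_j^2=c_j$. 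The crucial observation is that $g_0,\dots,g_s$ lie in the radical of the polar form, so $g_jx+xg_j=0$ for every generator $x$; in characteristic $2$ this means each $g_j$ commutes with everything and is therefore central in $C(\phi)$. Moreover $g_0$ is a unit with $g_0^{-1}=g_0$.

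Next I would perform the parity shift. Setting $E_i=g_0e_i$, $F_i=g_0f_i$ and $h_j=g_0g_j$ produces \emph{even} elements. Using $g_0^2=1$ and the centrality of $g_0$, a direct computation yields $E_i^2=a_i$, $F_i^2=a_ib_i$ and $E_iF_i+F_iE_i=a_i$, so that $w_i:=a_i^{-1}E_iF_i$ satisfies $w_i^2+w_i=b_i$ and $E_iw_iE_i^{-1}=w_i+1$; hence $w_i$ and $E_i$ generate a copy of the quaternion algebra $[b_i,a_i)$. For $i\neq k$ the pairs $(E_i,F_i)$ and $(E_k,F_k)$ commute because the corresponding planes are orthogonal, and each $h_j$ is central with $h_j^2=c_j$. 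Since $\phi$ is anisotropic, the $c_j$ are $F^2$-independent, so $F[h_1,\dots,h_s]\cong K$. These relations provide a well-defined $F$-algebra homomorphism
\[
\Phi:\ [b_1,a_1)\otimes_F\cdots\otimes_F[b_n,a_n)\otimes_F K\ \longrightarrow\ C_0(\phi),
\]
sending the two generators of the $i$-th quaternion factor to $w_i$ and $E_i$, and the generator $\sqrt{c_j}$ of $K$ to $h_j$, with $K$ mapping into the centre.

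To finish I would prove surjectivity and then invoke simplicity. As $C_0(\phi)$ is generated as an $F$-algebra by products of two basis vectors, it is enough to show each such product lies in the image of $\Phi$. Writing $e_i=g_0E_i$, $f_i=g_0F_i$ and $g_j=g_0h_j$ (for $j\geq 1$) via $g_0^{-1}=g_0$, every product of two basis vectors collapses to a word in the $E_i,F_i,h_j$: for instance $e_if_k=E_iF_k$, $e_ig_j=E_ih_j$, $g_jg_k=h_jh_k$, $g_0g_j=h_j$ and $g_0^2=1$. Thus $\Phi$ is surjective. Since the source is a tensor product of quaternion algebras base-changed to the field $K$, it is a central simple $K$-algebra, hence a simple ring; a surjection from a simple ring onto a nonzero ring is automatically injective. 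Therefore $\Phi$ is an isomorphism, which identifies $C_0(\phi)$ with $[b_1,a_1)\otimes_F\cdots\otimes_F[b_n,a_n)\otimes_F K$ and exhibits it as a central simple $K$-algebra of degree $2^n$. As a consistency check one has $\dim_F C_0(\phi)=2^{\dim\phi-1}=2^{2n+s}=4^n\cdot[K:F]$.

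I expect the main obstacle to be the parity-shift step: one must verify that the even elements $E_i,F_i,h_j$ satisfy \emph{exactly} the defining relations of the tensor product (so that $\Phi$ is well defined) and that they \emph{generate} all of $C_0(\phi)$. The verification of the quaternion relations $w_i^2+w_i=b_i$ and $E_iw_iE_i^{-1}=w_i+1$, together with the orthogonality-induced commutation across distinct factors, is where the characteristic-$2$ bookkeeping is most delicate; once these are in place, the simplicity of the source makes the passage from surjectivity to isomorphism purely formal.
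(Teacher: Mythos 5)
Your proposal is correct and takes essentially the same route as the paper's proof: both shift parity by multiplying the generators by the central square-one vector coming from the $\left<1\right>$-slot of the quasilinear part (your $g_0$, the paper's $z_1$), verify the quaternion relations $w_i^2+w_i=b_i$ and $E_iw_iE_i^{-1}=w_i+1$ on the resulting even generators, and identify the central subalgebra generated by the $h_j$ with $K$. The only difference is that you spell out well-definedness, surjectivity, and injectivity-via-simplicity, as well as the independence of the $c_j$ over $F^2$ needed to have $F[h_1,\dots,h_s]\cong K$ — points the paper's proof uses silently.
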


As a corollary, we get:

\begin{cor}
Let $\phi$ be an anisotropic $F$-quadratic form of type $(1,3)$ which is a Pfister neighbor. Then, up to a scalar, $\phi$ is isometric to $rs[1, u] \perp \left<1, r, s\right>$ for suitable scalars $r, s, u\in F^*$. Moreover, $C_0(\phi)$ is split as a $K$-algebra, where $K=F(\sqrt{r},\sqrt{s})$.
\label{coralg}
\end{cor}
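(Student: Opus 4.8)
The plan is to normalize $\phi$, read off the shape of its regular part from the Pfister-neighbor hypothesis, and then deduce the splitting directly from Proposition \ref{pexp}. First I would normalize: since $\phi$ is of type $(1,3)$, write $\phi\cong a[1,b]\perp\langle c_1,c_2,c_3\rangle$ with regular part $a[1,b]$ and $\ql(\phi)=\langle c_1,c_2,c_3\rangle$. Scaling $\phi$ by $c_1^{-1}$ (permissible, as the conclusion is only up to a scalar) and relabelling, I may assume $\ql(\phi)\cong\langle 1,r,s\rangle$. Anisotropy of $\phi$ forces $1,r,s$ to be $F^2$-independent, so in particular the quasi-Pfister form $\langle\!\langle r,s\rangle\!\rangle=\langle 1,r,s,rs\rangle$ is anisotropic; this last point will be needed later.

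For the structural claim, recall that $\phi$ is a Pfister neighbor of a $3$-fold quadratic Pfister form $\pi$, so $\lambda\phi$ is dominated by $\pi$ for some $\lambda\in F^*$, and after scaling I take $\lambda=1$. The totally singular part $\ql(\phi)=\langle 1,r,s\rangle$ is then a quasilinear form dominated by $\pi$, hence a quasi-Pfister neighbor of the $4$-dimensional quasi-Pfister companion underlying $\pi$; matching value spaces lets me write $\pi\cong\left< \! \left<r,s,u\right] \! \right]=\langle 1,r,s,rs\rangle_b\otimes[1,u]$ for a suitable $u\in F^*$, the companion being exactly $\langle\!\langle r,s\rangle\!\rangle$. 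Now $\pi$ is the orthogonal sum of the binary blocks $d[1,u]$ with $d\in\{1,r,s,rs\}$, and restricting the first three blocks to their leading variables produces the totally singular subform $\langle 1,r,s\rangle=\ql(\phi)$. The regular part $a[1,b]$ is a nonsingular binary subform of $\pi$, so by the roundness of $\langle\!\langle r,s\rangle\!\rangle$ it is similar to $[1,u]$ with similarity factor in $D_F(\langle\!\langle r,s\rangle\!\rangle)$; requiring it to be orthogonal to the distinguished copy of $\langle 1,r,s\rangle$ inside $\pi$ forces the factor into the remaining direction $rs$. This gives $a[1,b]\cong rs[1,u]$, that is $\phi\cong rs[1,u]\perp\langle 1,r,s\rangle$ up to a scalar, which is part (a).

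Part (b) is then immediate from Proposition \ref{pexp}. Applied with $n=1$ to $\phi\cong rs[1,u]\perp\langle 1,r,s\rangle$, it yields $C_0(\phi)\cong [u,rs)\otimes_F K$ with $K=F(\sqrt r,\sqrt s)$. Over $K$ the element $rs=(\sqrt r\,\sqrt s)^2$ is a square, and a characteristic-$2$ quaternion algebra $[u,rs)$ whose second slot is a square is split; hence $C_0(\phi)$ is split as a $K$-algebra.

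I expect the main obstacle to be the middle step, namely showing that the similarity factor of the regular part is \emph{exactly} $rs$ rather than merely an element of the value group $\langle r,s\rangle F^{*2}$. Once $\langle 1,r,s\rangle$ is placed on the leading diagonal of $\pi$, the complementary nonsingular block is morally forced, but making this precise requires the multiplicativity (roundness) of the anisotropic quasi-Pfister form $\langle\!\langle r,s\rangle\!\rangle$ together with the freedom to re-choose the basis $\{1,r,s\}$ of its value space so that the distinguished complementary direction becomes $rs$; the anisotropy of $\langle\!\langle r,s\rangle\!\rangle$ noted in the normalization step is what guarantees $rs$ lies outside the span of $1,r,s$ and hence is an admissible complement. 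The normalization and the quaternion splitting are routine by comparison.
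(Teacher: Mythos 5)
Your normalization, the identification $\pi\cong\left< \! \left<r, s, u\right] \! \right]$, and the final splitting of $[u,rs)\otimes_F K$ via Proposition \ref{pexp} all agree with the paper. The genuine gap is in your middle step. You claim that the regular part $a[1,b]$, being a nonsingular binary subform of $\pi$, must by roundness of $\left< \! \left<r, s\right> \! \right>$ be similar to $[1,u]$ with factor in $D_F(\left< \! \left<r, s\right> \! \right>)$. Roundness only gives the converse inclusion: every $d[1,u]$ with $d\in D_F(\left< \! \left<r, s\right> \! \right>)$ embeds in $\pi$; it does not say every binary nonsingular subform has this shape, and in fact that is false. Inside $[1,u]\perp r[1,u]=\left< \! \left<r, u\right] \! \right]\prec\pi$, the vectors $e_1$ and $e_2+e_3$ (with $e_1,e_2$ a basis of the first plane, $e_3,e_4$ of the second) span a nonsingular subform isometric to $[1,u+r]$, whose Arf invariant is $u+r$; since similar binary nonsingular forms have equal Arf invariants, $[1,u+r]$ is not similar to $[1,u]$ whenever $r\notin\wp(F)$, which is the generic situation. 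So the reduction to a form $d[1,u]$ is unproved, and your subsequent ``orthogonality forces the factor into the direction $rs$'' has no footing either: the factor $d$ is only well defined modulo $F^{*2}\cdot D_F([1,u])$, which does not respect the grading $F^2\oplus rF^2\oplus sF^2\oplus rsF^2$, and the class $rsF^{*2}$ is not even an invariant of the isometry class of $\ql(\phi)$ (e.g.\ $\left<1,r,s\right>\cong\left<1,r,r+s\right>$ while $r(r+s)/rs=1+r/s$ need not be a square). Ironically, the step you flagged as the main obstacle (getting \emph{exactly} $rs$) is the repairable half --- anisotropy of $\phi$ forces any admissible $d$ out of $D_F(\left<1,r,s\right>)$, after which one can rebase $\{1,r,s\}$ to arrange $d\equiv rs$; the step you treated as routine (similarity to $[1,u]$) is where the proof actually breaks.

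The paper avoids any analysis of binary subforms of $\pi$ and argues globally. After scaling so that $\phi=R\perp\left<1,r,s\right>\prec\pi$, the domination is completed to an isometry $\pi\cong R\perp[1,x]\perp r[1,y]\perp s[1,z]$: each diagonal entry $\left<c\right>$ of the quasilinear part extends to a binary form $c[1,\cdot]$ inside the nonsingular form $\pi$ (cf.\ \cite[Lemma 3.1]{Hoffmann:Laghribi2004}). Comparing this with $\pi\cong\left< \! \left<r, s, u\right] \! \right]=[1,u]\perp r[1,u]\perp s[1,u]\perp rs[1,u]$, one adds $\left<1,r,s\right>$ to both sides, applies $c[1,w]\perp\left<c\right>\cong[0,0]\perp\left<c\right>$ three times on each side, and cancels the hyperbolic planes by Witt cancellation to obtain $\phi\cong rs[1,u]\perp\left<1,r,s\right>$ in one stroke. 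If you want to keep your outline, you must replace your block-by-block identification of the regular part with such a completion-plus-cancellation argument; as written, the middle step fails on the Arf-invariant counterexample above.
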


For the classification of minimal $5$-dimensional quadratic forms of type $(1,3)$ over function fields of nonsingular conics, we prove the following theorem:

\begin{thm}
Let $\phi$ be an anisotropic $F$-quadratic form of type $(1,3)$, and $\tau=a[1, b]\perp \left<1\right>$ an anisotropic quadratic form of dimension $3$ and type $(1,1)$. Then, $\phi$ is $F(\tau)$-minimal iff the three conditions are satisfied:\\(i) $\phi$ is a Pfister neighbor of a $3$-fold Pfister form $\pi$.\\(ii) $\pi\cong \left< \! \left<c, a, b\right] \! \right]$ for some $c\in F^*$.\\(iii) For any $e\in F^*$, if $e[1,b]\subset \phi$ then $e\not\in D_F(a[1, b]).D_F(\ql(\phi))$.
\label{tp4}
\end{thm}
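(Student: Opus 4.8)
The plan is to reduce $F(\tau)$-minimality to a statement about the $3$-dimensional forms dominated by $\phi$, and then to recognize condition (iii) as the precise obstruction to the existence of such a subform that becomes isotropic over $F(\tau)$. First I would record the necessity of (i) and (ii): since $\phi$ is $F(\tau)$-minimal and $\tau$ is of type $(1,1)$, Proposition \ref{propg}(3) immediately yields that $\phi$ is a Pfister neighbor of $\pi\cong\left< \! \left<c, a, b\right] \! \right]$ for some $c\in F^*$. Conversely, if (i) and (ii) hold then $\tau$ is itself a neighbor of $\pi$, so $\pi_{F(\tau)}$ is hyperbolic; as $\dim\phi=5>\tfrac12\dim\pi$, the neighbor $\phi$ becomes isotropic over $F(\tau)$. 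Thus (i)+(ii) are equivalent to the isotropy requirement in the definition of minimality, and it remains to analyze the anisotropy of the proper dominated subforms.

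Next I would cut the dimension count down to $3$. By the definition of minimality I must check that every form dominated by $\phi$ of dimension at most $4$ stays anisotropic over $F(\tau)$. Dominated subforms of dimension $\le 2$ remain anisotropic over $F(\tau)$ (the base field is algebraically closed in $F(\tau)$ and a binary anisotropic form is not split by a conic function field), and since there are no $F(\tau)$-minimal forms of dimension $4$, any $4$-dimensional dominated subform that became isotropic would itself contain a proper dominated subform becoming isotropic; by transitivity of domination such a subform is dominated by $\phi$ and has dimension $\le 3$. Hence $\phi$ is $F(\tau)$-minimal if and only if (i), (ii) hold and no $3$-dimensional form dominated by $\phi$ becomes isotropic over $F(\tau)$. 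The $3$-dimensional dominated subforms split into two families: those of type $(1,1)$, whose nonsingular part must be the whole binary block of $\phi$ (the only nonsingular subform of a binary form being itself), and the totally singular ones. By the classification of $3$-dimensional isotropy over $F(\tau)$ (see \cite[Th. 1.4]{Laghribi2002}), a type $(1,1)$ form becomes isotropic over $F(\tau)$ exactly when it is similar to $\tau$, while an anisotropic totally singular form stays anisotropic over the function field of a nonsingular conic because $\ql(\tau)$ is $1$-dimensional.

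It then suffices to translate ``$\phi$ has a dominated $3$-dimensional subform similar to $\tau$'' into condition (iii). A type $(1,1)$ dominated subform has the shape $e[1, b]\perp\left<w\right>$, where $e[1, b]$ is the nonsingular part of $\phi$ (so $e[1,b]\subset\phi$, which forces the binary slot $b$ of $\tau$) and $w\in D_F(\ql(\phi))$. Such a form is isometric to $w\tau=wa[1, b]\perp\left<w\right>$ precisely when $e[1, b]\cong wa[1, b]$, that is, when $e\in wa\,D_F([1, b])=w\,D_F(a[1, b])\subseteq D_F(a[1, b])\cdot D_F(\ql(\phi))$; the converse factorization is immediate. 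Therefore a dominated subform similar to $\tau$ exists if and only if some $e$ with $e[1, b]\subset\phi$ lies in $D_F(a[1, b])\cdot D_F(\ql(\phi))$, which is exactly the negation of (iii). Combining this with the reduction of the previous paragraph gives the desired equivalence. I expect the two delicate points to be the careful bookkeeping of the domination relation needed to justify that every relevant $3$-dimensional subform is of the form $e[1,b]\perp\left<w\right>$ or totally singular, and the verification that no totally singular $3$-dimensional dominated subform becomes isotropic over $F(\tau)$; the latter is the main obstacle, as it relies on the isotropy behaviour of quasilinear forms over function fields of nonsingular conics together with the anisotropy of $\phi$ inherited from (i) and (ii).
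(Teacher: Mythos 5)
Your overall architecture coincides with the paper's own proof: necessity of (i)--(ii) via the Pfister-neighbor statement (you quote Faivre's Proposition \ref{propg}(3), where the paper deliberately reproves this fact through its new isotropy result, Proposition \ref{pinsc} and Corollary \ref{c3}; but quoting Faivre is legitimate since the paper records it as known), isotropy of $\phi_{F(\tau)}$ from (i)+(ii), and then reduction of non-minimality to the existence of a dominated form similar to $\tau$ via \cite[Th.~1.4]{Laghribi2002}, followed by the same $e=axy$ bookkeeping using the roundness of $[1,b]$ to translate this into condition (iii). Your extra reduction from dimension $4$ to dimension $3$ (non-existence of $4$-dimensional minimal forms, plus the facts that binary and totally singular anisotropic forms survive the separable extension $F(\tau)/F$) is a harmless repackaging of the paper's direct appeal to the $4$-dimensional case of the same theorem; both routes rest on the same sources.

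The one step that fails as stated is your structural claim that a dominated type-$(1,1)$ subform ``has the shape $e[1,b]\perp\left<w\right>$, where $e[1,b]$ is the nonsingular part of $\phi$'', justified by ``the only nonsingular subform of a binary form being itself''. Domination in characteristic $2$ does not behave this way: writing $\phi\cong m[1,u]\perp \ql(\phi)$, the $2$-dimensional subspaces on which the polar form of $\phi$ is nondegenerate are the graphs of linear maps $f$ from the nonsingular plane into the radical, so the nonsingular binary forms dominated by $\phi$ are the twists $x\mapsto m[1,u](x)+\ql(\phi)(f(x))$; for instance $[1,u]\perp\left<c\right>$ dominates $[1+c\alpha^{2},\,u+c\beta^{2}]$, whose Arf invariant generically differs from that of $[1,u]$. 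For the same reason the nonsingular complement of $\ql(\phi)$ is not unique up to isometry inside $\phi$, so your ``isometric to $w\tau$ precisely when $e[1,b]\cong wa[1,b]$'' also needs care. What rescues the argument is that you only need the single instance $\psi\cong w\tau$, i.e.\ $wa[1,b]\perp\left<w\right>\prec\phi$: there the graph subspace $U$ carrying $wa[1,b]$ splits off, $\phi\cong\phi|_{U}\perp\ql(\phi)$, because $U\oplus\mathrm{rad}(b_{\phi})=V$ and the radical is orthogonal to everything, and the vector representing $w$ is $b_{\phi}$-orthogonal to $U$, hence lies in the radical; this gives $\phi\cong wa[1,b]\perp\ql(\phi)$ with $w\in D_{F}(\ql(\phi))$, so $e:=wa$ violates (iii). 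This is precisely the content of the domination lemma \cite[Lemma~3.1]{Hoffmann:Laghribi2004}, which the paper invokes implicitly at the same spot (``$\phi\cong ea[1,b]\perp\left<e,f,g\right>$''). With that substitution for your false general claim, your proof closes and agrees with the paper's.
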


Finally, for the classification of minimal quadratic forms of type $(0,5)$, we use the language of bilinear forms, which will help us to use a cohomological invariant and a classification parallel to those given in Theorems \ref{trappel} et \ref{tp1}. Namely, we will prove the following result:

\begin{thm}
Let $B$ be an anisotropic $F$-bilinear form of dimension $5$, and $Q=\left<1, a, b\right>$ an anisotropic totally singular quadratic form of dimension $3$. Then, the following statements are equivalent:\\(1) $B$ is $F(Q)$-minimal.\\(2) There exists an $F$-bilinear form $C$ of dimension $5$ which is a strong Pfister neighbor of a bilinear Pfister form $\left< \! \left< a, b, c\right> \! \right>_b$ and satisfies the two conditions:\\(i) $\widetilde{B}\cong \widetilde{C}$.\\(ii) For any $u, v\in F^2(a, b)$ such that $\left<u, v, uv\right>_b$ is similar to a subform of $\left< \! \left< a, b, c\right> \! \right>_b$, the invariant $e^2(C \perp \left<\det C\right>_b\perp \left< \! \left<u, v\right> \! \right>_b+I^3F)$ has 
length $2$.
\label{tp3}
\end{thm}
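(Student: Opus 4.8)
The plan is to transfer everything to the totally singular setting, where minimality is already understood, and to use the bilinear form only as a vehicle for the finer invariant $e^2$. By Corollary \ref{c1}, the bilinear form $B$ is $F(Q)$-minimal if and only if its associated totally singular quadratic form $\widetilde{B}$ is $F(Q)$-minimal. Thus I set $\phi:=\widetilde{B}$, a $5$-dimensional anisotropic totally singular form, and note that any bilinear form $C$ with $\widetilde{C}\cong\widetilde{B}$ has $\widetilde{C}=\phi$ as well, so the choice of bilinear representative is immaterial for minimality but decisive for the invariant in (ii). One checks that $C\perp\langle\det C\rangle_b$ has trivial determinant and therefore lies in $I^2F$, so that $C\perp\langle\det C\rangle_b\perp\left< \! \left< u,v\right> \! \right>_b\in I^2F$ and the class on which $e^2$ is evaluated is well defined modulo $I^3F$.

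For the implication (1) $\Rightarrow$ (2), I would first establish the totally singular analogue of Proposition \ref{propg}(3): an $F(Q)$-minimal totally singular form of dimension $5$ must be a quasi-Pfister neighbor of a $3$-fold quasi-Pfister form, and the isotropy criterion for totally singular forms over $F(\left<1,a,b\right>)$ — governed by the norm degree $\ndeg$ and by quasi-Pfister form containment — forces this quasi-Pfister form to be $\left< \! \left< a,b,c\right> \! \right>$ for a suitable $c\in F^*$. Lifting to the bilinear level yields a bilinear form $C$ with $\widetilde{C}\cong\widetilde{B}$ that is a strong Pfister neighbor of $\left< \! \left< a,b,c\right> \! \right>_b$, which is condition (i). Condition (ii) is then read off from minimality: since no $4$-dimensional form dominated by $\phi$ may become isotropic over $F(Q)$, the $e^2$-classes attached to the admissible pairs $(u,v)$ cannot degenerate, and I would show this degeneration is exactly measured by the length of $e^2$ dropping below $2$.

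The converse (2) $\Rightarrow$ (1) is the shorter half. If $C$ is a strong Pfister neighbor of $\left< \! \left< a,b,c\right> \! \right>_b$ then $\phi=\widetilde{C}$ is a quasi-Pfister neighbor of $\left< \! \left< a,b,c\right> \! \right>$; since $\left<1,a,b\right>$ is dominated by $\left< \! \left< a,b\right> \! \right>\subset\left< \! \left< a,b,c\right> \! \right>$, the form $\phi$ becomes isotropic over $F(Q)$. It remains to rule out that a proper dominated subform also splits over $F(Q)$, and the length-$2$ hypothesis on each relevant $e^2$-class provides precisely this obstruction, giving minimality.

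The technical heart — and the main obstacle — is the equivalence between the field-theoretic minimality condition and the cohomological length-$2$ condition. Concretely, I must match the requirement that for every scalar $e$ no $4$-dimensional subform of $e\phi$ becomes isotropic over $F(Q)$ (the analogue of condition (iii) in Theorems \ref{tp1} and \ref{tp2}) against the statement that $e^2(C\perp\langle\det C\rangle_b\perp\left< \! \left< u,v\right> \! \right>_b+I^3F)$ has length exactly $2$ for all admissible $u,v\in F^2(a,b)$ with $\left<u,v,uv\right>_b$ similar to a subform of $\left< \! \left< a,b,c\right> \! \right>_b$. This requires a precise description of which $4$-dimensional totally singular subforms become isotropic over $F(\left<1,a,b\right>)$ in terms of these $2$-fold symbols, together with the fact that the length of the $e^2$-class measures exactly whether the correction by $\left< \! \left< u,v\right> \! \right>_b$ forces a splitting modulo $I^3F$. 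Keeping track of the scaling parameter $e$ — which is absorbed into the similarity factor of $\left<u,v,uv\right>_b$ — while controlling the length of $e^2$ rather than its mere nonvanishing, is where the argument demands the greatest care.
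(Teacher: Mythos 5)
Your outline reproduces the paper's skeleton --- reduction via Corollary \ref{c1}, the norm-degree/quasi-Pfister-neighbor argument for condition (i), lifting to a bilinear representative by Lemma \ref{l1}, and the $e^2$-length criterion --- but the two pivotal implications are announced rather than proved, and you yourself flag them as ``the main obstacle''. That is exactly where the content of the theorem lies, so the proposal has a genuine gap. Concretely, for (2)$\Rightarrow$(1) the mechanism is not a classification of which $4$-dimensional subforms split over $F(Q)$: by Theorem \ref{t2} (in the form of Corollary \ref{cmin}), for dimension $\leq 5$ non-minimality of a form isotropic over $K=F(Q)$ forces the $3$-dimensional form $Q$ itself to be similar to a subform of $\widetilde{C}$, and Lemma \ref{l1} transfers this to the bilinear level only up to squares, yielding $p\left<1, a+q^2, b+ar^2+s^2\right>_b\subset C$. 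Setting $u=a+q^2$, $v=b+ar^2+s^2$ (so $u,v\in F^2(a,b)$, and $\left<u,v,uv\right>_b$ is similar to a subform of $\left< \! \left<a,b,c\right> \! \right>_b$ because $C$ is a strong Pfister neighbor of it), one gets $C\perp\left<\det C\right>_b\perp\left< \! \left<u,v\right> \! \right>_b\equiv\tau\pmod{I^3F}$ with $\tau\in GBP_2F$, i.e.\ length $\leq 1$, contradicting (ii). This computation is short, but it is also the reason condition (ii) quantifies over all $u,v\in F^2(a,b)$ rather than over the single pair $(a,b)$: the bilinear lift of $Q$ is only defined modulo such square perturbations. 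Your sketch does not register this phenomenon, and without it one cannot even see why the hypothesis is stated the way it is.

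The harder half, (1)$\Rightarrow$(2), is entirely missing from your proposal. After writing $C\cong\left< \! \left<x,y\right> \! \right>_b\perp\left<z\right>_b$ up to a scalar (Proposition \ref{p3}), one must show that length $\leq 1$ for some admissible pair $(u,v)$ contradicts minimality. The paper first reduces to $\left< \! \left<u,v\right> \! \right>_b\cong\left< \! \left<a,b\right> \! \right>_b$ (Theorem \ref{t2} together with the linkage Lemma \ref{ll}), then uses Lemma \ref{ll2} to convert ``length $\leq 1$'' into isotropy of the Albert form $\left<a,b,ab,x,y,xy\right>_b$, hence $1$-linkage of $\left< \! \left<x,y\right> \! \right>_b$ and $\left< \! \left<a,b\right> \! \right>_b$, and finally runs a chain of explicit isometries (comparing the two $3$-fold bilinear Pfister forms of which $C$ is a strong Pfister neighbor, adding and cancelling metabolic planes) that manufactures a $4$-dimensional subform $C_1\subset bC$ with $(C_1)_K$ isotropic --- the explicit witness of non-minimality. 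Nothing in your proposal substitutes for this construction: statements such as ``the $e^2$-classes cannot degenerate'' and ``degeneration is exactly measured by the length dropping below $2$'' are restatements of the desired equivalence, not arguments. Until you supply the subform-up-to-squares computation in one direction and the linkage-plus-isometry construction in the other, what you have is a correct plan whose central equivalence remains unproved.
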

 
Note that for bilinear forms, nothing happens over the function field of a nonsingular conic since an anisotropic bilinear form remains anisotropic over such a field. To clarify the notations used in Theorem \ref{tp3}, let us recall that to any bilinear form $B$ of underlying vector space $V$, we associate a totally singular quadratic form $\widetilde{B}$ defined on $V$ by: $\widetilde{B}(v)=B(v, v)$ for all $v\in V$. The cohomological invariant $e^2$ is that due to Kato \cite{Kato1982} going from the quotient $I^2F/I^3F$ to $\nu_F(2)$, where $\nu_F(2)$ is the additive group generated by the logarithmic symbols $\frac{\dif a_1}{a_1}\wedge \frac{\dif a_2}{a_2}$ for $a_1, a_2\in F^*$. This invariant plays the role of the Clifford invariant which is not defined for bilinear forms in characteristic $2$, and thus the group $\nu_F(2)$ can be seen as the $2$-torsion of the Brauer group. The word ``length'' that we talk about in the previous theorem concerns the smallest number of logarithmic symbols needed to write the cohomological invariant $e^2(\eta)$ for $\eta \in I^2F/I^3F$. Finally, the notion of a strongly Pfister neighbor bilinear form is defined as the classical notion of a quadratic Pfister neighbor. We use the term ``strongly'' since we have another weaker notion of bilinear Pfister neighbor (see below).   

\section{Background on quadratic and bilinear forms}\label{background}

Any quadratic form $\phi$ decomposes as follows:
\begin{equation}
\phi \cong [a_1, b_1]\perp \cdots \perp [a_r, b_r]\perp \left<c_1\right>\perp \cdots \perp \left<c_s\right>,
\label{eqdec}
\end{equation}where $[a, b]$ (\resp $\left<c\right>$) denotes the binary quadratic form $ax^2+xy+by^2$ (\resp $cz^2$). Here, $\cong$ and $\perp$ denote the isometry and the orthogonal sum, respectively. 

As in (\ref{eqdec}), the form $\phi$ is called of type $(r, s)$. We say that $\phi$ is nonsingular (\resp totally singular) if $s=0$ (\resp $r=0$). It is called singular if $s>0$. 

The form $\left<c_1\right>\perp \cdots\perp \left<c_s\right>$ in (\ref{eqdec}) is unique up to isometry, we call it the quasilinear part of $\phi$ and denote it by $\ql(\phi)$. 

The Witt decomposition asserts that $\phi$ uniquely decomposes as follows:
\begin{equation}
\phi\cong \phi_{an} \perp [0,0]\perp \cdots \perp [0,0]\perp \left<0\right>\perp \cdots \perp \left<0\right>,
\label{eqdec2}
\end{equation}where the form $\phi_{an}$ is anisotropic that we call the anisotropic part of $\phi$. The number of copies of the hyperbolic plane $[0, 0]$ in (\ref{eqdec2}) is called the Witt index of $\phi$, we denote it by $i_W(\phi)$. Similarly, the number of $\left<0\right>$ in (\ref{eqdec2}) is called the defect index of $\phi$ and denoted by $i_d(\phi)$. The total index of $\phi$ is $i_W(\phi)+i_d(\phi)$.

Let $C(\phi)$ (\resp $C_0(\phi)$) denote the Clifford algebra (\resp the even Clifford algebra) of the quadratic form $\phi$. When $\phi \cong a_1[1, b_1]\perp \cdots \perp a_r[1, b_r]$ is nonsingular, its Arf invariant $\Delta(\phi)$ is defined as the class of $\sum_{i=1}^r b_i$ in $F/\wp(F)$, where $\wp(F)=\{x^2+x\mid x\in F\}$. In this case, we have $C(\phi)\simeq\otimes_{i=1}^r[b_i, a_i)$, where $[b, a)$ denotes the quaternion algebra generated by two elements $i$ and $j$ subject to the relations: $i^2=a \in F^*$, $j^2+j=b\in F$ and $iji^{-1}=j+1$. 

Two quadratic forms $\phi$ and $\psi$ are called Witt equivalent if $\phi \perp m\times [0,0]\cong \psi \perp n\times [0,0]$ for some integers $m, n\geq 0$. In this case, we write $\phi \sim \psi$.

An $F$-quadratic form $(V, \phi)$ is said to be dominated by  another quadratic form $(W, \psi)$, denoted by $\phi \prec \psi$, if there exists an injective $F$-linear map $\sigma:V\longrightarrow W$ such that $\phi(v)=\psi(\sigma(v))$ for any $v\in V$. We say that $\phi$ is a subform of $\psi$, denoted by $\phi \subset \psi$, if $\psi\cong \phi \perp \phi'$ for a suitable quadratic form $\phi'$. Clearly, if $\phi$ is a subform of $\psi$, then it is dominated by $\psi$, but the converse is not true in general. We refer to 
\cite[Lemma 3.1]{Hoffmann:Laghribi2004} for more details on the domination relation. 

For $a_1, \cdots, a_n\in F^*:=F\setminus \{0\}$, let $\left<a_1, \cdots, a_n\right>_b$ denote the diagonal bilinear form given by:$$((x_1, \cdots, x_n), (y_1, \cdots, y_n))\mapsto \sum_{i=1}^na_ix_iy_i.$$

A metabolic plane is a $2$-dimensional bilinear form isometric to $\begin{pmatrix} a & 1\\1 &0\end{pmatrix}$ for some $a\in F$. A bilinear form is called metabolic if it is isometric to a sum of metabolic planes. 

Let $W_q(F)$ (\resp $W(F)$) be the Witt group of nonsingular quadratic forms (\resp the Witt ring on nondegenarate symmetric bilinear forms). For any integer $n\geq 1$, let $I^nF$ be the $n$-th power of the fundamental ideal $IF$ of $W(F)$ (we take $I^0F=W(F)$). Recall that $W_q(F)$ is endowed with $W(F)$-module structure in a natural way \cite{B}. For any integer $m\geq 2$, let $I^m_qF$ be the submodule $I^{n-1}F\cdot W_q(F)$ of $W_q(F)$. The ideal $I^nF$ is additively generated by the $n$-fold bilinear Pfister forms $\left<1, a_1\right>_b\otimes\cdots \otimes \left<1, a_n\right>_b$, that we denote by $\left< \! \left<a_1, \cdots, a_n\right> \! \right>_b$. The submodule $I^m_qF$ is generated, as a $W(F)$-module, by the quadratic forms $\left< \! \left<a_1, \cdots, a_{m-1}\right> \! \right>_b\cdot [1, b]$, that we denote by $\left< \! \left<a_1, \cdots, a_{m-1}, b\right] \! \right]$ and call an $m$-fold quadratic Pfister form. 

Let $P_mF$ be the set of forms isometric to $m$-fold quadratic Pfister forms, and $GP_mF$ the set of forms similar to forms in $P_mF$. Similarly, let $BP_mF$ be the set of forms isometric to $m$-fold bilinear Pfister forms, and $GBP_mF$ the set of forms similar to forms in $BP_mF$.

Recall that a quadratic (\resp bilinear) Pfister form $Q$ is isotropic iff it is hyperbolic (\resp metabolic). Such a form is also round meaning that $\alpha \in F^*$ is represented by $Q$ iff $Q\cong \alpha Q$.

For a quadratic form $\phi \cong [a_1, b_1]\perp \cdots \perp [a_r, b_r]\perp \left<c_1\right>\perp \cdots \perp \left<c_s\right>$, let take the polynomial $P_{\phi}=\sum_{i=1}^r(a_ix_i^2+x_iy_i+b_iy_i^2)+\sum_{j=1}^sc_jz_j^2$. This polynomial is reducible when $\phi\cong [0,0]\perp \left<0\right>\perp \cdots \perp \left<0\right>$ or $\phi \cong \left<a\right>\perp \left<0\right>\perp \cdots \perp \left<0\right>$ for some $a\in F^*$ \cite[Prop. 3]{MTW}. When $P_{\phi}$ is irreducible, we denote by $F(\phi)$ the quotient field of the ring $F[x_i,y_i, z_j]/(P_{\phi})$, that we call the function field of $\phi$. When $P_{\phi}$ is reducible or $\phi$ is the zero form, then we take $F(\phi)=F$.

A quadratic form $\phi$ is called a Pfister neighbor if there exist $\pi \in P_mF$ and $a\in F^*$ such that $a\phi \prec \pi$ and $2\dim \phi >\dim \pi$. Recall that the form $\pi$ is unique up to isometry, and for any field extension $K/F$ we have that $\phi_K$ is isotropic iff $\pi_K$ is isotropic. In particular, $\phi_{F(\pi)}$ and $\pi_{F(\phi)}$ are isotropic.

\section{Proof of Theorem \ref{tp1}}

\begin{sloppypar}
We start by proving the following proposition that improves \cite[Th. 1.2(3)]{Laghribi2002}.

\begin{prop}
Let $\phi$ be an anisotropic quadratic form of type $(2,1)$, and $\tau=\left<1, a, b\right>$ an anisotropic totally singular quadratic form of dimension $3$. Suppose that $\phi$ is not a Pfister neighbor. Then, $\phi_{F(\tau)}$ is isotropic iff $\tau$ is dominated by $\phi$ up to a scalar.
\label{p1}
\end{prop}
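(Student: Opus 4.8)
The plan is to prove the two implications separately, the first being immediate and the second carrying all the content. Write $\phi = \psi \perp \langle c\rangle$ with $\psi$ nonsingular of dimension $4$ and $\ql(\phi) = \langle c\rangle$, and let $b_\phi(x,y) = \phi(x+y)+\phi(x)+\phi(y)$ denote the polar form (so $b_\phi = b_\psi$ on the $\psi$-part and $e_c$, the vector with $\phi(e_c)=c$, lies in its radical). For ``$\lambda\tau \prec \phi \Rightarrow \phi_{F(\tau)}$ isotropic'' I would note that $\tau_{F(\tau)}$ is isotropic by definition of the function field, so $(\lambda\tau)_{F(\tau)}$ has a nonzero zero $v$, and the dominating map sends $v$ to a nonzero vector on which $\phi_{F(\tau)}$ vanishes. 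For the converse I would use the characteristic-$2$ criterion that a singular form $\psi \perp \langle c\rangle$, with $\psi$ nonsingular and anisotropic, becomes isotropic over an extension $K$ if and only if $\psi_K$ is isotropic or $c \in D_K(\psi)$. This splits the hard direction into two cases.

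First case: $\psi_{F(\tau)}$ is isotropic. Here I would first observe that $\lambda\tau \prec \psi$ is impossible, since dominating the totally singular $\lambda\tau = \langle\lambda,\lambda a,\lambda b\rangle$ into $\psi$ would force three independent, mutually $b_\psi$-orthogonal vectors, i.e.\ a $3$-dimensional totally isotropic subspace for the nondegenerate alternating form $b_\psi$ on a $4$-dimensional space, which does not exist. Thus the isotropy must come from the Pfister mechanism, and I would invoke the classification of $4$-dimensional nonsingular forms over such function fields from \cite{Laghribi2002} to conclude that $\psi \in GP_2F$ and that $av^2+b$ is a similarity factor of $\psi$ over $F(v)$. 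Writing $\psi \cong s\langle\!\langle e, f]]$, one then reconstructs a $3$-fold quadratic Pfister form of which $\phi = \psi \perp \langle c\rangle$ is a neighbor, contradicting the hypothesis. Hence this case cannot occur, which explains why the quasilinear direction $\langle c\rangle$ is indispensable.

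Second case: $\psi_{F(\tau)}$ is anisotropic but $c \in D_{F(\tau)}(\psi)$. Writing $F(\tau) = F(v)(\sqrt{av^2+b})$, a purely inseparable quadratic extension of the rational field $F(v)$, I would expand a representation $\psi(\alpha + \beta\sqrt{av^2+b}) = c$ into components, getting $\alpha,\beta$ over $F(v)$ with $b_\psi(\alpha,\beta)=0$ and $\psi(\alpha) + (av^2+b)\psi(\beta) = c$. Since $\psi$ is anisotropic over $F(v)$ (Cassels--Pfister), $\beta\neq 0$; otherwise $c\in D_{F(v)}(\psi)$, hence $c\in D_F(\psi)$, making $\phi$ isotropic over $F$. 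After clearing denominators and running a Cassels--Pfister-type degree reduction, I would bring the solution to the shape $\beta = q_0$ constant and $\alpha = p_0 + p_1 v$, and read off the coefficients: the top term gives $\psi(p_1) = a\psi(q_0)$, the constant term gives $c = \psi(p_0) + b\psi(q_0)$, and the remaining coefficients together with $b_\psi(\alpha,\beta)=0$ give $b_\psi(p_1,q_0) = b_\psi(p_0,q_0) = b_\psi(p_0,p_1) = 0$. Setting $s = \psi(q_0)$ and $z = p_0 + e_c$, the vectors $q_0,\,p_1,\,z$ are mutually $b_\phi$-orthogonal with $\phi$-values $s,\,as,\,bs$; they are independent because $a \notin F^{*2}$ (else $\langle 1,a\rangle \subset \tau$ would be isotropic). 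Therefore $s\tau \prec \phi$, as desired.

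The main obstacle is the degree reduction in the second case. The governing equation $\psi(\alpha) + (av^2+b)\psi(\beta) = c$ lives over the inseparable extension and carries the orthogonality constraint $b_\psi(\alpha,\beta)=0$, which must be preserved at every step of the Cassels--Pfister descent; moreover one must show that the minimal solution genuinely has $\deg\beta = 0$, so that precisely the configuration yielding $s\tau \prec \phi$ emerges, all while keeping $\psi$ anisotropic over $F(v)$ so that $\psi(\beta)\neq 0$. The first case, by contrast, is clean once the dichotomy for $4$-dimensional nonsingular forms from \cite{Laghribi2002} is available.
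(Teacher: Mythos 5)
Your setup is sound: the easy direction, the splitting of the isotropy of $\phi=\psi\perp\left<c\right>$ into ``$\psi_{F(\tau)}$ isotropic or $c\in D_{F(\tau)}(\psi)$'', the symplectic argument ruling out $\lambda\tau\prec\psi$, and the endgame of your second case (the vectors $q_0$, $p_1$, $p_0+e_c$ are pairwise $b_\phi$-orthogonal with values $s$, $as$, $bs$; note that injectivity of the resulting map follows simply from the anisotropy of $s\tau$, not from $a\notin F^{*2}$) are all correct. The genuine gap is the step you yourself flag as the main obstacle, and it is worse than a technical lacuna: the degree reduction to $\beta=q_0$ constant and $\alpha=p_0+p_1v$ is \emph{false} in general, because your second case never uses the hypothesis that $\phi$ is not a Pfister neighbor, and the desired conclusion fails without it. The paper's own Example \ref{exa1} shows this: $\phi=c[1,a+b]\perp b[1,a]\perp\left<1\right>$ with $\tau=\left<1,b,ac\right>$ is $F(\tau)$-minimal; its nonsingular part $R$ must remain anisotropic over $F(\tau)$ (otherwise the dominated $4$-dimensional form $R$ would contradict minimality), so $1\in D_{F(\tau)}(R)$ and this $\phi$ sits exactly in your second case --- yet no multiple of $\tau$ is dominated by $\phi$, again by minimality. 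Hence for this $\phi$ every solution of $\psi(\alpha)+(av^2+b)\psi(\beta)=c$ with $b_\psi(\alpha,\beta)=0$ has degrees strictly beyond your normal form, and no unconditional Cassels--Pfister descent can produce it. Any repair must consume the non-Pfister-neighbor hypothesis inside the descent, and your outline offers no mechanism for that.

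This is precisely where the paper takes a different route: the hypothesis enters through the isotropy theorem \cite[Th. 1.2(3)]{Laghribi2002}, which for a non-Pfister-neighbor $\phi$ of type $(2,1)$ with $\phi_{F(\tau)}$ isotropic yields $\phi\sim x\pi\perp\phi'$ with $\pi\in P_3F$ isotropic over $F(\tau)$ (so $\pi\cong\left< \! \left<a,b,c\right] \! \right]$) and $\phi'$ of type $(2,1)$ dominating $\tau$ up to a scalar; the remaining work is Witt calculus transferring the domination from $\phi'$ to $\phi$: add $\theta=y\left< \! \left<a,b\right> \! \right>$ to both sides, use roundness to get $x\pi\perp\theta\sim z\left< \! \left<a,b\right> \! \right>$, deduce that $R\perp\theta$ has Witt index $2$, extract a binary totally singular form dominated by both $R$ and $\theta$ via \cite[Prop. 3.11]{Hoffmann:Laghribi2004}, and conclude with \cite[Th. 1.2(1)]{Laghribi:Rehmann2013} that the resulting $3$-dimensional form is similar to $\tau$. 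If you insist on the direct polynomial approach, you would in effect be reproving \cite[Th. 1.2(3)]{Laghribi2002}: you must show that exactly when your low-degree normal form fails to exist, a $GP_3F$-summand splits off the Witt class of $\phi$ --- that is the real content hidden behind ``degree reduction''.
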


\begin{proof}
It is clear that if $\tau$ is dominated by $\phi$ up to a scalar, then $\phi_{F(\tau)}$ is isotropic. Conversely, suppose that $\phi_{F(\tau)}$ is isotropic. It follows from \cite[Th. 1.2(3)]{Laghribi2002} that 
\begin{equation}
\phi \sim x\pi \perp \phi'
\label{e1}
\end{equation}where $x\in F^*$, $\pi$ a $3$-fold Pfister form isotropic over $F(\tau)$, and $\phi'$ a form of type $(2,1)$ that dominates $\tau$ up to a scalar. Without loss of generality, we may suppose $\phi \cong R\perp \left<1\right>$, where $R$ is nonsingular of dimension $4$. Hence, $\left<1\right>$ is also the quasilinear part of $\phi'$. 

Let $y\in F^*$ be such that $y\tau$ is dominated by $\phi'$. Hence, by the domination relation, there exist $u, v, w\in F^*$ such that $y\tau\cong \left< u, v, w\right>$ and $\phi'\cong u[1, p] \perp v[1, q]\perp \left<w\right>$ for suitable $p, q\in F^*$. By the uniqueness of the quasilinear part, we have $\left<1\right>\cong \left<w\right>$. Without loss of generality, we may suppose $w=1$. Moreover, the isotropy of $\pi_{F(\tau)}$ implies that $\pi\cong \left< \! \left<a, b, c\right] \! \right]$ for some $c\in F^*$. 

We add in both sides of (\ref{e1}) the form $\theta:=y\left< \! \left<a, b\right> \! \right>$, we get:

\begin{equation}
R \perp \theta \perp \left<0\right> \sim x\pi \perp \theta \perp \left<0\right>.
\label{e2}
\end{equation}
Cancelling the form $\left<0\right>$ and comparing the dimension in both sides of (\ref{e2}), we see that $x\pi \perp \theta$ is isotropic. By the roundness of (quasi-)Pfister forms, there exists $z\in F^*$ such that $x\pi \perp \theta \cong z(\pi \perp \left< \! \left<a, b\right> \! \right>)\sim z\left< \! \left<a, b\right> \! \right>$.  Hence, we get
\begin{equation}
R \perp \theta  \sim z\left< \! \left<a, b\right> \! \right>.
\label{e3}
\end{equation}This implies that the Witt index of $R \perp \theta$ is $2$. Hence, there exists a form $\left<r, s\right>$ which is dominated by $R$ and $\theta$  \cite[Prop. 3.11]{Hoffmann:Laghribi2004}. Then, $\left<1, r, s\right>$ is dominated by $\phi$. In particular, $\left<1, r, s\right>$ is anisotropic. Since $\theta$ represents $r$, $s$ and $1$ (because we take $w=1$), it follows that $\left<1, r, s\right>$ is dominated by $\theta$. Consequently, $\left<1, r, s\right>$ becomes isotropic over $F(\tau)$ because $\tau$ and $\left<1, r, s\right>$ are quasi-Pfister neighbors of the same quasi-Pfister form $\left< \! \left<a, b\right> \! \right>$. It follows from \cite[Th. 1.2(1)]{Laghribi:Rehmann2013} that $\tau$ is similar to $\left<1, r, s\right>$, and thus $\tau$ is dominated by $\phi$ up to a scalar.\qed
\end{proof}

As an immediate corollary, we get:

\begin{cor}
Let $\phi$ be an anisotropic quadratic form of type $(2,1)$, and $\tau=\left<1, a, b\right>$ an anisotropic totally singular quadratic form of dimension $3$. If $\phi$ is $F(\tau)$-minimal, then $\phi$ is a Pfister neighbor.
\label{cor1}
\end{cor}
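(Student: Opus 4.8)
The plan is to obtain this as an immediate consequence of Proposition \ref{p1} via contraposition, so the argument amounts to unwinding the definition of minimality. I would assume that $\phi$ is $F(\tau)$-minimal; by definition this already forces $\phi_{F(\tau)}$ to be isotropic, and it forces every form $\psi \prec \phi$ with $\dim \psi < \dim \phi = 5$ to remain anisotropic over $F(\tau)$. The goal is to show $\phi$ is a Pfister neighbor, so I would argue by contradiction and suppose instead that $\phi$ is \emph{not} a Pfister neighbor.

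Under that supposition, both hypotheses of Proposition \ref{p1} are met: $\phi$ is of type $(2,1)$, is not a Pfister neighbor, and $\phi_{F(\tau)}$ is isotropic. The proposition then yields that $\tau$ is dominated by $\phi$ up to a scalar, i.e.\ there is some $y \in F^*$ with $y\tau \prec \phi$. The key observation is that this $y\tau = \left<y, ya, yb\right>$ is a form of dimension $3 < 5 = \dim \phi$, dominated by $\phi$, and \emph{similar} to $\tau$. Since scaling does not affect isotropy, $(y\tau)_{F(\tau)}$ is isotropic exactly because $\tau_{F(\tau)}$ is isotropic, the latter holding by the very definition of the function field $F(\tau)$.

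This produces a proper dominated subform of $\phi$ that becomes isotropic over $F(\tau)$, contradicting the $F(\tau)$-minimality of $\phi$, which demands that all such smaller dominated forms stay anisotropic over $F(\tau)$. Hence the assumption fails and $\phi$ must be a Pfister neighbor.

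I do not anticipate any genuine obstacle here: the entire content is carried by Proposition \ref{p1}, and the corollary merely rephrases its dichotomy in the language of minimality. The only points needing (trivial) verification are that $y\tau$ has dimension strictly less than $\dim\phi$ and that scaling $\tau$ preserves its isotropy over $F(\tau)$; both are immediate. The proof is therefore essentially complete as soon as Proposition \ref{p1} is invoked.
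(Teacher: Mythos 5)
Your proof is correct and is exactly the argument the paper intends: the corollary is stated there as an immediate consequence of Proposition \ref{p1}, obtained by the same contrapositive reading (if $\phi$ is not a Pfister neighbor, the proposition gives $y\tau \prec \phi$ with $\dim y\tau = 3 < 5$ and $(y\tau)_{F(\tau)}$ isotropic, contradicting minimality). Nothing is missing; the two trivial verifications you flag are indeed all that is needed.
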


\noindent{\bf Proof of Theorem \ref{tp1}.} Let $\phi$ be an anisotropic $F$-quadratic form of dimension $5$ and type $(2,1)$, and $\tau=\left<1, a, b\right>$ an ansiotropic totally singular $F$-quadratic form of dimension $3$.

(2) $\Longrightarrow$ (1) Suppose that the conditions (i), (ii) and (iii) given in the theorem are satisfied. Since $\pi_{F(\tau)}$ is isotropic and $\phi$ is a Pfister neighbor of $\pi$, it follows that $\phi_{F(\tau)}$ is isotropic. Suppose that $\phi$ is not $F(\tau)$-minimal. Then, there exists $\psi$ a form dominated by $\phi$ of dimension $3$ or $4$ such that $\psi_{F(\tau)}$ is isotropic. Using \cite[Th. 1.3]{Laghribi2002}, we can see that the form $\psi$ is neither of type $(2, 0)$ nor of type $(1, 1)$. Hence, $\psi$ is of type $(0, 3)$ or of type $(1, 2)$. But in both cases, there exists $x\in F^*$ such that the form $x\tau$ is dominated by $\psi$ (use \cite[Th. 1.4]{Laghribi2002} for type $(1, 2)$, and \cite[Th. 1.2]{Laghribi:Rehmann2013} for type $(0, 3)$). There exist $u, v, w$ such that $x\tau\cong \left<u, v, w\right>$ and $\phi \cong u[1, p]\perp v[1, q]\perp \left<w\right>$. We have that $C_0(\phi)$ is Brauer equivalent to $[p, uw)\otimes_F[q, vw)$. Because $x\left<1, a, b\right>\cong \left<u, v, w\right>$, the scalars $uw$ and $vw$ are squares $F(\sqrt{a}, \sqrt{b})$. Consequently, $C_0(\phi)_{F(\sqrt{a}, \sqrt{b})}$ is split, a contradiction.

(1) $\Longrightarrow$ (2) Suppose that $\phi$ is $F(\tau)$-minimal. By Corollary \ref{cor1}, we deduce that $\phi$ is a Pfister neighbor of a $3$-fold Pfister form $\pi$. By \cite[Prop. 3.2]{Laghribi2002}, we have $\ind C_0(\phi)\leq 2$. Since $\phi$ is anisotropic, we get $\ind C_0(\phi)= 2$. The isotropy of $\phi_{F(\tau)}$ implies that $\pi_{F(\tau)}$ is also isotropic, and thus hyperbolic. Hence, $\pi \cong \left< \! \left<a, b, c\right] \! \right]$ for some $c\in F^*$. Without loss of generality, we may suppose that $\phi=R\perp \left<1\right>$ for $R$ nonsingular of dimension $4$. Let $x\in F^*$ be such that $x\phi$ is dominated by $\pi$. Hence, we get 
\begin{equation}
x\pi \cong R \perp [1, r] \perp s[1, t]
\label{e4}
\end{equation}for suitable $r, s\in F^*$ and $t=r+\Delta(R)\in F/\wp(F)$. Taking the Clifford algebra of $x\pi$, we get $C(R)=[t, s)$. Moreover, we have $C(R)\sim C_0(\phi)$ \cite[Lemma 2]{MTW}. 

Suppose that $C_0(\phi)_{F(\sqrt{a}, \sqrt{b})}$ is split. Then, $\left< \! \left<s, t\right] \! \right]$ is hyperbolic over $F(\sqrt{a}, \sqrt{b})$. By a result of Baeza \cite[Corollary 4.16, page 128]{B}, we get $\left< \! \left<s, t\right] \! \right]\sim \left<1, a\right>\otimes \phi_1 \perp \left<1, b\right>\otimes \phi_2$ for suitable $\phi_1, \phi_2\in W_q(F)$. Hence, we get$$\left< \! \left<s, t\right] \! \right]\perp \left< \! \left<a, k_1\right] \! \right] \perp \left< \! \left<b, k_2\right] \! \right] \in I^3_qF,$$where $k_i=\Delta(\phi_i)$ for $i=1, 2$. Combining with equation (\ref{e4}) yields
\begin{equation}
R\perp [1, r+t+k_1+k_2] \perp a[1, k_1]\perp b[1, k_2] \in I^3_qF.
\label{e5}
\end{equation}It follows from Pfister's result that there exists $\rho\in GP_3F$ such that$$R\perp [1, r+t+k_1+k_2] \sim a[1, k_1]\perp b[1, k_2] \perp \rho.$$Consequently, by adding $\left<1\right>$ on both sides, we get$$\phi\sim a[1, k_1]\perp b[1, k_2] \perp \left<1\right>\perp \rho.$$Clearly, from this last equivalence, we deduce that $\rho_{F(\tau)}$ is isotropic. Now we have the same conditions as in equation (\ref{e1}) in the proof of Proposition \ref{p1}. We reproduce the same arguments to deduce that $\tau$ is dominated by $\phi$ up to a scalar, a contradiction to the minimality of $\phi$.\qed
\vskip1.5mm

Let us give an example for which Theorem \ref{tp1} applies. This example is similar to the one given by Chapman and Qu\'eguiner-Mathieu for the minimality over the function field of a nonsingular conic \cite{CQM}.
\vskip1.5mm

\begin{example} Let $a, b, c$ be indeterminates over a field $F_0$ of characteristic $2$. Let us consider the forms $\phi=c[1, a+b]\perp b[1, a]\perp \left<1\right>$, $\tau=\left<1, b, ac\right>$ and $\pi=\left< \! \left<b, c, a+b\right] \! \right]$ over the rational function field $F=F_0(a, b, c)$. We have the following:\\(1) $\pi\cong \left<c, bc\right>_b.[1, a+b] \perp \left<1, b\right>_b.[1, a]$ because $\left<1, b\right>_b.[1, a+b]\cong \left<1, b\right>_b.[1, a]$. This proves that $\phi \prec \pi$, and thus $\phi$ is a Pfister neighbor of $\pi$.\\(2) $\phi_{F(\tau)}$ is isotropic because $c\tau \prec \pi$ as we can see from (1).\\(3) Let $L=F(\sqrt{b}, \sqrt{abc})$. We have in the Brauer group of $F$ the following:$$C_0(\phi)_L=[a+b, bc)_L=[a+b,a)_L=[b, a)_L.$$The algebra $[b, a)$ is division over $F_1:=F_0(a,b)(\sqrt{b})$, and it remains division over $L=F_1(\sqrt{ac})$.\\Hence, Theorem \ref{tp1} implies that $\phi$ is $F(\tau)$-minimal.
\label{exa1}
\end{example}

\section{Proof of Theorem \ref{tp2}}

As we did for the proof of Theorem \ref{tp1}, we start proving a result on the isotropy that we need for the proof of Theorem \ref{tp2}. This result refines \cite[Th. 1.1(3)]{Dolphin:Laghribi2017}.

\begin{prop} Let $\phi$ be an anisotropic $F$-quadratic form of dimension $5$ and type $(1, 3)$. Let $\tau=\left<1, a, b\right>$ be an anisotropic totally singular $F$-quadratic form of dimension $3$. Suppose that $\phi$ is not a Pfister neighbor. Then, $\phi$ is isotropic over $F(\tau)$ iff $\tau$ is dominated by $\phi$ up to a scalar.
\label{priso}
\end{prop}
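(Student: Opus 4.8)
The reverse implication is immediate: the form $\tau$ is isotropic over its own function field $F(\tau)$, hence so is every scalar multiple $x\tau$, and since domination preserves the represented values, $x\tau\prec\phi$ forces $\phi_{F(\tau)}$ to be isotropic. The substance is the forward implication, and the plan is to follow closely the argument of Proposition \ref{p1}, the new feature being that for type $(1,3)$ the quasilinear part $\ql(\phi)$ has dimension $3$, the same as $\tau$.

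First I would apply \cite[Th. 1.1(3)]{Dolphin:Laghribi2017} to the isotropy of $\phi_{F(\tau)}$. In exact analogy with the type $(2,1)$ case treated in Proposition \ref{p1}, this should yield a Witt equivalence $\phi\sim x\pi\perp\phi'$ with $x\in F^*$, $\pi$ a $3$-fold quadratic Pfister form isotropic (hence hyperbolic) over $F(\tau)$, and $\phi'$ of type $(1,3)$ dominating $\tau$ up to a scalar; here the hypothesis that $\phi$ is not a Pfister neighbor is exactly what excludes the degenerate alternative $\phi\sim x\pi$ and ensures that $\phi'$ genuinely carries $\tau$. The isotropy of $\pi$ over $F(\tau)$ then forces $\pi\cong\left< \! \left<a, b, c\right] \! \right]$ for some $c\in F^*$, and since Witt equivalence by the nonsingular form $x\pi$ does not affect the quasilinear part, I may normalize so that $\phi$ and $\phi'$ have the same quasilinear part $\ql(\phi)$.

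The heart of the proof is to upgrade the relation $y\tau\prec\phi'$, which a priori lives only at the level of the auxiliary Witt-equivalent form, into a genuine domination of $\tau$ by $\phi$ itself up to a scalar. Writing $y\tau\cong\left<u, v, w\right>$, I would add the totally singular quasi-Pfister form $\theta:=y\left< \! \left<a, b\right> \! \right>$, which contains $y\tau$ because $\tau\subset\left< \! \left<a, b\right> \! \right>$, to both sides of the decomposition and analyze the nonsingular and totally singular parts that result. Cancelling the defect and using the roundness of quadratic and quasi-Pfister forms should collapse the Pfister contribution and leave a Witt relation between the nonsingular part $q$ of $\phi$, the quasilinear part $\ql(\phi)$, and $\theta$ that constrains the total index. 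Applying \cite[Prop. 3.11]{Hoffmann:Laghribi2004} I would then extract a common totally singular subform and assemble from it, together with $\ql(\phi)$, a $3$-dimensional totally singular form $\sigma\prec\phi$ that is a quasi-Pfister neighbor of $\left< \! \left<a, b\right> \! \right>$ and becomes isotropic over $F(\tau)$. Since $\tau$ is also such a neighbor, \cite[Th. 1.2(1)]{Laghribi:Rehmann2013} then identifies $\sigma$ as similar to $\tau$, giving $\tau\prec\phi$ up to a scalar.

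I expect the main obstacle to sit precisely in this upgrading step, and to stem from the coincidence $\dim\ql(\phi)=\dim\tau=3$. Because of it, the domination $y\tau\prec\phi'$ may split nontrivially between the two-dimensional nonsingular part and the three-dimensional quasilinear part of $\phi'$, so that $\ql(\phi)$ interacts with $\theta$ in a way that has no counterpart in the type $(2,1)$ situation, where $\ql$ is one-dimensional. The careful bookkeeping required to guarantee that the common subform produced by the index computation can be realized \emph{inside} $\phi$, and not merely inside the auxiliary form $\phi'$, is the point I would scrutinize most, and it is also where I would expect to have to re-examine whether the roundness reduction goes through unchanged.
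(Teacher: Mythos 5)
You have correctly identified the paper's route---\cite[Th. 1.1(3)]{Dolphin:Laghribi2017}, then adding $\theta=y\left< \! \left<a, b\right> \! \right>$, cancellation and roundness, a common represented element, and finally \cite[Th. 1.2(1)]{Laghribi:Rehmann2013}---but the one step you explicitly leave open, the possible ``nontrivial split'' of the domination $y\tau\prec\phi'$ caused by $\dim \ql(\phi)=\dim\tau=3$, is precisely the crux, and your proposal does not resolve it. The paper's fix is a preliminary case distinction that is missing from your plan: if $\tau$ is similar to $\ql(\phi)$, then $\tau$ is dominated by $\phi$ up to a scalar outright (the quasilinear part is a subform of $\phi$), and the proof is over before any machinery is invoked. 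Once this case is excluded, the split is pinned down completely: if all three diagonal entries of $y\tau$ were carried by the quasilinear part of $\phi'$, then $y\tau\cong\ql(\phi')\cong\ql(\phi)$, contradicting the exclusion; and since the nonsingular part of a type-$(1,3)$ form is a single binary form, exactly one entry goes there. This yields the normal form $y\tau\cong\left<u, v, w\right>$ and $\phi'\cong u[1, p]\perp\left<v, w, z\right>$ with $\left<v, w, z\right>\cong\ql(\phi)$, after which the computation runs essentially as in Proposition \ref{p1}.

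A second, quantitative point where your plan would stall if followed literally: after cancellation and roundness the relation here is $R\perp\left<z\right>\perp\theta\sim\left<z\right>\perp s\left< \! \left<a, b\right> \! \right>$, whose left side has dimension $7$ against an anisotropic part of dimension at most $5$, so one only gets isotropy (total index $\geq 1$), not the Witt index $2$ that \cite[Prop. 3.11]{Hoffmann:Laghribi2004} exploited in the type-$(2,1)$ case. You therefore cannot extract a common binary subform---but you do not need one. A single common value $t\in D_F(R\perp\left<z\right>)\cap D_F(\theta)$ suffices, because the two remaining entries $v, w$ of $y\tau$ already sit inside $\ql(\phi)$: the form $\left<v, w, t\right>$ is then dominated both by $\phi\cong R\perp\left<v, w, z\right>$ and by $\theta$, hence is a quasi-Pfister neighbor of $\left< \! \left<a, b\right> \! \right>$ that becomes isotropic over $F(\tau)$, and \cite[Th. 1.2(1)]{Laghribi:Rehmann2013} gives the similarity to $\tau$, as you intended. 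The roundness reduction itself does go through unchanged, with the extra entry $\left<z\right>$ carried along on both sides.
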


\begin{proof} The proof of this proposition looks like that of Proposition \ref{p1}. However, few changes will be needed to adapt it to the type $(1,3)$, this is why we include a proof. 

Suppose that $\phi_{F(\tau)}$ is isotropic. If $\tau$ is similar to $\ql(\phi)$ then we are done. So suppose that $\tau$ is not similar to $\ql(\phi)$. Since $\phi_{F(\tau)}$ is isotropic, we get from \cite[Th. 1.1(3)]{Dolphin:Laghribi2017}   
\begin{equation}
\phi \sim x\pi \perp \phi'
\label{2e1}
\end{equation}where $x\in F^*$, $\pi$ a $3$-fold Pfister form isotropic over $F(\tau)$, and $\phi'$ a form of type $(1,3)$ that dominates $\tau$ up to a scalar. Without loss of generality, we may write $\phi \cong R\perp \left<1, r, s\right>$, where $R$ is nonsingular of dimension $2$. Hence, $\left<1, r, s\right>$ is also the quasilinear part of $\phi'$.

Let $y\in F^*$ be such that $y\tau$ is dominated by $\phi'$. Hence, by the domination relation and using the fact that $\tau$ and $\ql(\phi)=\ql(\phi')$ are not similar, there exist $u, v, w, z\in F^*$ such that $y\tau\cong \left< u, v, w\right>$ and $\phi'\cong u[1, p] \perp \left<v, w, z\right>$. Moreover, the isotropy of $\pi_{F(\tau)}$ implies that $\pi\cong \left< \! \left<a, b, c\right] \! \right]$ for some $c\in F^*$. 

We add in both sides of (\ref{2e1}) the form $\theta:=y\left< \! \left<a, b\right> \! \right>$, we get:

\begin{equation}
R \perp \left<z\right>\perp \theta \perp \left<0, 0\right> \sim x\pi \perp \left<z\right>\perp  \theta \perp \left<0, 0\right>.
\label{2e2}
\end{equation}
Cancelling the form $\left<0, 0\right>$ and comparing the dimension of both sides in (\ref{2e2}), we see that $x\pi \perp \theta$ is isotropic. By the roundness of (quasi-)Pfister forms, there exists $s\in F^*$ such that $x\pi \perp \theta \cong s(\pi \perp \left< \! \left<a, b\right> \! \right>)\sim s\left< \! \left<a, b\right> \! \right>$.  Hence, we get
\begin{equation}
R \perp \left<z\right>\perp \theta  \sim \left<z\right>\perp s\left< \! \left<a, b\right> \! \right>.
\label{2e3}
\end{equation}This implies that $R \perp \left<z\right>\perp \theta$ is isotropic. Hence, there exists $t\in D_F(R\perp \left<z\right>)\cap D_F(\theta)$. Then, $\left<v, w, t\right>$ is dominated by $\phi$. In particular, $\left<v, w, t\right>$ is anisotropic. Since $\theta$ represents $v$, $w$ and $t$, it follows that $\left<v, w, t\right>$ is dominated by $\theta$. Consequently, $\left<v, w, t\right>$ becomes isotropic over $F(\tau)$ because $\tau$ and $\left<v, w, t\right>$ are quasi-Pfister neighbors of the same quasi-Pfister form $\left< \! \left<a, b\right> \! \right>$. It follows from \cite[Th. 1.2(1)]{Laghribi:Rehmann2013} that $\tau$ is similar to $\left<v, w, t\right>$, and thus $\tau$ is dominated by $\phi$ up to a scalar.\qed
\end{proof}
\medskip
The following corollary is immediate:

\begin{cor}
Let $\phi$ be an anisotropic $F$-quadratic form of dimension $5$ and type $(1, 3)$, and $\tau=\left<1, a, b\right>$ an anisotropic totally singular $F$-quadratic form of dimension $3$. If $\phi$ is $F(\tau)$-minimal, then it is a Pfister neighbor.
\label{c2}
\end{cor}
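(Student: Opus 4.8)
The plan is to deduce the corollary directly from Proposition \ref{priso} by an argument by contradiction. First I would unwind the hypothesis: assuming $\phi$ is $F(\tau)$-minimal means, by definition, that $\phi_{F(\tau)}$ is isotropic while every form $\psi \prec \phi$ with $\dim \psi < \dim \phi = 5$ stays anisotropic over $F(\tau)$. The goal is to show that non-neighborliness of $\phi$ would supply a proper dominated subform that violates the second condition.

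So suppose, toward a contradiction, that $\phi$ is \emph{not} a Pfister neighbor. Then Proposition \ref{priso} is applicable, and since $\phi_{F(\tau)}$ is isotropic it yields that $\tau$ is dominated by $\phi$ up to a scalar; that is, there exists $y \in F^*$ with $y\tau \prec \phi$. I would then point out that this scaled form is exactly the obstruction to minimality: one has $\dim(y\tau) = 3 < 5 = \dim \phi$, and because $\tau_{F(\tau)}$ is isotropic by the very construction of the function field $F(\tau)$, the scalar multiple $(y\tau)_{F(\tau)}$ is isotropic as well. Thus $y\tau$ is a form dominated by $\phi$, of dimension strictly smaller than $\dim \phi$, which becomes isotropic over $F(\tau)$, contradicting the minimality of $\phi$. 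Therefore $\phi$ must be a Pfister neighbor.

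I do not expect any genuine obstacle here: the corollary is simply the contrapositive packaging of the isotropy dichotomy in Proposition \ref{priso} against the definition of minimality. The only point deserving an explicit word is that passing from $\tau$ to the scalar multiple $y\tau$ does not affect isotropy over $F(\tau)$, which is clear since isotropy is invariant under scaling; everything else is a direct reading of the definitions.
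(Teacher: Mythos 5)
Your proof is correct and is exactly the argument the paper intends: the paper states the corollary as ``immediate'' from Proposition \ref{priso}, and your contrapositive reading --- non-neighborliness forces $y\tau \prec \phi$ with $\dim(y\tau)=3<5$ and $(y\tau)_{F(\tau)}$ isotropic, contradicting minimality --- is precisely that implicit deduction, with the scaling-invariance of isotropy correctly noted as the only point needing mention.
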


\noindent{\bf Proof of Theorem \ref{tp2}.} Let $\phi$ be an anisotropic $F$-quadratic form of type $(1,3)$, and $\tau=\left<1, a, b\right>$ an anisotropic totally singular quadratic form of dimension $3$. 

-- Suppose that $\phi$ is $F(\tau)$-minimal. Then, by Corollary \ref{c2}, $\phi$ is a Pfister neighbor of a $3$-fold Pfister form $\pi$. Since $\phi_{F(\tau)}$ isotropic, then $\pi_{F(\tau)}$ is also isotropic. Hence, $\pi \cong \left< \! \left<a, b, c\right] \! \right]$ for some $c\in F^*$. Suppose that there exists $e\in F^*$ such that: $i_d(e\tau \perp \ql(\phi))\geq 2$ and ($i_d(e\tau \perp \ql(\phi))\neq 2$ or $(D_F(\phi)\cap D_F(e\tau))\setminus  D_F(\ql(\phi))\neq\emptyset$). This is equivalent to saying: $i_d(e\tau \perp \ql(\phi))=3$ or ($i_d(e\tau \perp \ql(\phi))\geq 2$ and $(D_F(\phi)\cap D_F(e\tau))\setminus  D_F(\ql(\phi))\neq\emptyset$). The condition $i_d(e\tau \perp \ql(\phi))=3$ means that $\ql(\phi)$ is similar to $\tau$, while the second condition means that $e\tau$ is dominated by $\phi$. Hence, $\phi$ is not $F(\tau)$-minimal, a contradiction. 

-- Conversely, suppose we have the three conditions (i), (ii) and (iii) as described in the theorem. Since $\phi$ is a Pfister neighbor of a $3$-fold Pfister form $\left< \! \left<a, b, c\right] \! \right]$ for some $c\in F^*$, it follows that $\phi_{F(\tau)}$ is isotropic. Suppose that $\phi$ is not $F(\tau)$-minimal. Then, there exists $\psi$ a form dominated by $\phi$ of dimension $3$ or $4$ such that $\psi_{F(\tau)}$ is isotropic. Then, $e\tau$ is dominated by $\psi$ for a suitable $e\in F^*$ (we use \cite[Th. 1.2]{Laghribi:Rehmann2013} when $\psi$ is totally singular, and \cite[Th. 1.4, subsection 1.4]{Laghribi2002} when $\psi$ is of type $(1,2)$). This gives two possibilities: 

(a) $e\tau$ is isometric to $\ql(\phi)$, which contradicts the condition (iii).

(b) or there exists $x, y, z, t, u\in F^*$ such that $e\tau\cong \left<x, y, z\right>$ and $\phi \cong x[1, u]\perp \left<y, z, t\right>$. This condition also contradicts (iii) because $i_d(e\tau \perp \ql(\phi))=2$ but $x\in (D_F(\phi)\cap D_F(e\tau))\setminus D_F(\ql(\phi)$. Hence, $\phi$ is $F(\tau)$-minimal.\qed
\medskip

\noindent{\bf Proof of Proposition \ref{pexp}.} Suppose $\phi=a_1[1, b_1]\perp \cdots \perp a_n[1, b_n]\perp \left<1, c_1, \cdots, c_s\right>$ for $a_i, b_i, c_j\in F$ such that $a_i\neq 0$. The Clifford algebra is generated by $x_1,y_1,\dots,x_n,y_n,z_1,\dots,z_s$ such that $z_i$ commutes with all the generators, $x_i$ commutes with $y_j$ when $i\neq j$, and $x_iy_i+y_ix_i=1$ and $x_i^2=a_i$, $y_i^2=\frac{b_i}{a_i}$ and $z_i^2=c_i$.
The even Clifford algebra is generated by $u_1,v_1,\dots,u_n,v_n,w_2,\dots,w_s$ where $u_i=x_iz_1$, $v_i=y_i z_1$ and $w_i=z_i z_1$. The relations are the following: $w_i$ commutes with all the other generators, $u_i$ commutes with $v_j$ for $i\neq j$, $u_iv_i+v_iu_i=z_1^2=1$ and $u_i^2=a_i$, $v_i^2=\frac{b_i}{a_i}$ and $w_i^2=c_i$.
Therefore, the even Clifford algebra is 
$$F\langle u_1,v_1 \rangle \otimes F \langle u_2,v_2 \rangle \otimes \dots \otimes F \langle u_n,v_n \rangle \otimes F \langle w_2,\dots,w_n \rangle,$$
which is indeed $[b_1,a_1)_F \otimes \dots \otimes [b_n,a_n)_F \otimes K$.\qed

\medskip

\noindent{\bf Proof of Corollary \ref{coralg}.} Let $\phi$ be an anisotropic quadratic form of type $(1, 3)$. Suppose that $\phi$ is a Pfister neighbor of a $3$-fold Pfister form $\pi$. Modulo a scalar, we may write $\phi=R\perp \left<1, r, s\right>$ for suitable nonsingular quadratic form $R$ of dimension $2$ and $r, s\in F^*$. On the one hand, since $\pi$ is isotropic over $F(\left<1, r, s\right>)$, it follows that $\pi$ is also isotropic over $F(\left< \! \left<r, s\right> \! \right>)$, and thus $\pi \cong \left< \! \left<r, s,u\right] \! \right]$ for some $u\in F^*$. On the other hand, the hyperbolicity of $\pi_{F(\phi)}$ implies that $\pi \cong R\perp [1, x]\perp r[1, y] \perp s[1, z]$ for some $x, y, z\in F^*$. Hence, we get$$\left< \! \left<r, s,u\right] \! \right]\cong R\perp [1, x]\perp r[1, y] \perp s[1, z].$$Adding on both sides in the last isometry the form $\left<1, r, s\right>$, and canceling the hyperbolic planes, yields that $\phi \cong rs[1, u]\perp \left<1,r, s\right>$.

The fact that $C_0(\phi)$ is split as an $F(\sqrt{r}, \sqrt{s})$-algebra is a direct consequence of Proposition \ref{pexp}.\qed 

We finish this section by an example applying the criteria given in Theorem \ref{tp2}.

\begin{example}
Let $r, s, u$ be indeterminates over a field $F_0$ of characteristic $2$. Let us consider the forms $\phi=rs[1, u]\perp \left<1, r, s\right>$ and $\tau=\left<1, ru, s(r^2+r+u)\right>$ over the rational function field $F=F_0(r, s, u)$. We have the following statements:\\(1) It is clear that $\phi$ is a Pfister neighbor of $\pi=\left< \! \left<r, s, u\right] \! \right]$. Moreover, $\tau$ is dominated by $\pi$ because the scalars $1$, $ru$ and $s(r^2+r+u)$ are represented by the forms $[1, u]$, $r[1, u]$ and $s[1, u]$, respectively. Hence, $\phi_{F(\tau)}$ is isotropic.\\(2) For any $e\in F^*$, we have $i_d(e\tau \perp \ql(\phi))\leq 1$. In fact, suppose that $i_d(e\tau \perp \ql(\phi)\geq 2$ for some $e\in F^*$. Then, using \cite[Prop. 3.2]{Hoffmann:Laghribi2004}, there exists a totally singular quadratic form of dimension $2$ which is dominated by $e\tau$ and $\ql(\phi)$. Consequently, there exists an inseparable quadratic extension $K=F(\sqrt{d})$ such that $\left< \! \left<r, s\right> \! \right>_K$ and $\left< \! \left<ru, s(r^2+r+u)\right> \! \right>_K$ are isotropic, and thus quasi-hyperbolic. This implies that 
$\left< \! \left<r, s\right> \! \right>\cong \left< \! \left<d, k\right> \! \right>$ and $\left< \! \left<ru, s(r^2+r+u)\right> \! \right>\cong \left< \! \left<d, l\right> \! \right>$ for suitable $k, l\in F^*$. Hence, $\left< \! \left<r, s\right> \! \right>\perp \left< \! \left<ru, s(r^2+r+u)\right> \! \right>$ has defect index $\geq 2$. In particular, $\theta:=\left< \! \left<r, s\right> \! \right>\perp \left<ru, s(r^2+r+u), rus(r^2+r+u)\right>$ is isotropic. But, using the classical isometry $\left<a, b\right>\cong \left<a, a+b\right>$ for any $a, b\in F$, we get:
\begin{eqnarray*}
\theta & = & \left<1, r, s, rs\right> \perp \left<ru, r^2s+rs+ su, rsu(r^2)+su(r^2)+rs(u^2)\right>\\&\cong & 
\left<1,r, s, rs\right> \perp \left<ru, su, rsu(r^2)+su(r^2)+rs(u^2)\right>\\& \cong & \left<1,r, s, rs\right> \perp \left<ru, su, rsu(r^2)\right>\\& \cong & \left<1,r, s, rs\right> \perp \left<ru, su, rsu\right>\\& =& \left<1\right>\perp \left<1, u\right>.\left<r, s, rs\right>,
\end{eqnarray*}which shows that $\theta$ is anisotropic, a contradiction. Hence, $i_d(e\tau \perp \ql(\phi))\leq 1$, and thus Theorem \ref{tp2} implies that $\phi$ is $F(\tau)$-minimal.\qed
\label{exa2}
\end{example}

\section{Proof of Theorem \ref{tp4}} 
Following Proposition \ref{propg}, any anisotropic quadratic form of type $(1,3)$ which is minimal over the function field of a nonsingular conic is necessary a Pfister neighbor. The proof given by Faivre uses some arguments developed by Hoffmann, Lewis and Van Geel in characteristic not $2$ \cite{HLVG}. In this section, we give an alternative proof of this fact using a different method based on the following result on isotropy that refines \cite[Th. 1.1(1)]{Dolphin:Laghribi2017}.

\begin{prop}
Let $\phi$ be an anisotropic quadratic form of type $(1,3)$, and $\tau$ an anisotropic quadratic form of type $(1,1)$. Suppose that $\phi$ is not a Pfister neighbor. Then, $\phi_{F(\tau)}$ is isotropic iff $\tau$ is dominated by $\phi$ up to a scalar.
\label{pinsc}
\end{prop}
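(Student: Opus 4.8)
The plan is to follow the template of the proof of Proposition~\ref{priso}, replacing the totally singular setting by the type $(1,1)$ one. The role played there by the quasi-Pfister form $\left< \! \left<a, b\right> \! \right]$ is now played by the $2$-fold \emph{quadratic} Pfister form $\left< \! \left<a, b\right] \! \right]\cong [1, b]\perp a[1, b]$, which dominates $\tau=a[1, b]\perp \left<1\right>$; thus $\tau$ is a Pfister neighbor of $\left< \! \left<a, b\right] \! \right]$. One implication is immediate: if $e\tau$ is dominated by $\phi$ for some $e\in F^*$, then $\tau_{F(\tau)}$ is isotropic and hence so is $\phi_{F(\tau)}$.

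For the converse, I would assume $\phi_{F(\tau)}$ isotropic and, since $\phi$ is not a Pfister neighbor, invoke \cite[Th. 1.1(1)]{Dolphin:Laghribi2017} to obtain a Witt decomposition $\phi \sim x\pi \perp \phi'$, where $x\in F^*$, $\pi$ is a $3$-fold quadratic Pfister form with $\pi_{F(\tau)}$ isotropic, and $\phi'$ is a form of type $(1,3)$ dominating $\tau$ up to a scalar. After normalizing $\phi\cong R\perp \left<1, r, s\right>$ with $R$ nonsingular of dimension $2$, the quasilinear part of $\phi'$ is again $\left<1, r, s\right>$, and the isotropy of $\pi_{F(\tau)}$ together with the neighbor relation $\tau\prec \left< \! \left<a, b\right] \! \right]$ forces $\pi\cong \left< \! \left<c, a, b\right] \! \right]$ for some $c\in F^*$. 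Fixing $y\in F^*$ with $y\tau\prec\phi'$ and using that the nonsingular part of $y\tau$ is $ya[1, b]$, the domination criterion \cite[Lemma 3.1]{Hoffmann:Laghribi2004} yields $\phi'\cong ya[1, b]\perp \left<1, r, s\right>$ together with $y\in D_F(\left<1, r, s\right>)$.

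The heart of the matter is to transfer the domination of $\tau$ from $\phi'$ to $\phi$ itself. Following Proposition~\ref{priso}, I would add $\theta:=y\left< \! \left<a, b\right] \! \right]$ to both sides of the Witt relation; using the factorization $\pi\cong \left< \! \left<c\right> \! \right>_b\otimes \left< \! \left<a, b\right] \! \right]$, the metabolicity of $\left<1, 1\right>_b$, and the roundness of $\left< \! \left<a, b\right] \! \right]$, the term $x\pi\perp\theta$ rewrites, once seen to be isotropic, as a single scalar multiple of $\left< \! \left<a, b\right] \! \right]$, and cancelling hyperbolic planes leaves a relation of the shape $R\perp ya[1, b]\sim x\pi$ in $W_q(F)$. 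Since the left-hand side is nonsingular of dimension $4$ while $x\pi\in I^3_qF$, the Arason--Pfister bound in $I^3_qF$ forces $x\pi$ to be hyperbolic and $R\cong ya[1, b]$. Combined with $y\in D_F(\ql(\phi))$ this gives $y\tau\cong ya[1, b]\perp \left<y\right>\cong R\perp \left<y\right>\prec R\perp \ql(\phi)=\phi$, so $\tau$ is dominated by $\phi$ up to the scalar $y$. Alternatively, staying closer to Proposition~\ref{priso}, one extracts via \cite[Prop. 3.11]{Hoffmann:Laghribi2004} a form of type $(1,1)$ dominated simultaneously by $\phi$ and by $\theta$, recognizes it as a neighbor of $\left< \! \left<a, b\right] \! \right]$ that is isotropic over $F(\tau)$, and identifies it with $\tau$ up to a scalar using the similarity statement for $3$-dimensional forms of type $(1,1)$ sharing the same Pfister companion \cite[Th. 1.4]{Laghribi2002}.

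The step I expect to be the main obstacle is the bookkeeping around the nonsingular part of $\tau$. In Proposition~\ref{priso} every slot of $\tau$ is quasilinear and one argues entirely with defect indices; here $\tau$ carries the genuine nonsingular slot $a[1, b]$, so its companion $\left< \! \left<a, b\right] \! \right]$ is a (nonsingular) quadratic Pfister form, and the extraction that pins down $R\cong ya[1, b]$ must be controlled through the Witt index, the $W(F)$-module structure of $W_q(F)$, and the Arason--Pfister property of $I^3_qF$ rather than through defect indices. The delicate point is to ensure that the binary form recovered from the Witt relation reassembles with the quasilinear slot $\left<y\right>$ into a bona fide scalar multiple of $\tau$ that is honestly dominated by $\phi$, and that the concluding similarity step remains valid in the type $(1,1)$ setting.
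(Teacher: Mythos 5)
The decisive gap is your opening citation. For a nonsingular conic, \cite[Th. 1.1(1)]{Dolphin:Laghribi2017} does not provide the decomposition $\phi \sim x\pi \perp \phi'$ with $\phi'$ of type $(1,3)$ dominating $\tau$ up to a scalar: that is the shape of part (3) of that theorem (for totally singular $\tau$), which is exactly what Proposition~\ref{priso} uses and what you transplanted. What the theorem yields in the type-$(1,1)$ case, and what the paper's proof actually starts from, is different: there exist $\alpha, \beta, u, v\in F^*$ and nonsingular binary forms $R_1, R_2$ with $\alpha\phi\cong R_1\perp \left<1, u, v\right>$, $\beta\tau= R_2\perp \left<1\right>$, and a \emph{nonsingular} Witt relation $R_1\perp R_2\perp \rho \sim x\pi$ in $W_q(F)$, where $\rho$ is a nonsingular complement of $\left<1, u, v\right>$ and it is $\pi\in P_3F$ (not a five-dimensional $\phi'$) that dominates $\tau$ up to a scalar. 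So the object your whole computation manipulates is not supplied by the cited result, and the template of Proposition~\ref{priso} cannot simply be copied over.

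Even granting your decomposition, the step you yourself flag as the obstacle does fail. First, $y\tau \prec \phi'$ does not yield $\phi'\cong ya[1, b]\perp \left<1, r, s\right>$ with $y\in D_F(\ql(\phi))$: domination mixes the nonsingular and quasilinear slots, as the isometry $[1, b]\perp \left<c\right>\cong [1, b+c]\perp \left<c\right>$ (so $[1, b+c]\prec [1, b]\perp \left<c\right>$) shows. Second, and fatally, after adding $\theta=y\left< \! \left<a, b\right] \! \right]$ you are handling a Witt equivalence of \emph{singular} forms, and you may not cancel the quasilinear part $\left<1, r, s\right>$ to land in $W_q(F)$: Witt equivalence only identifies anisotropic parts, and quasilinear slots absorb nonsingular planes (e.g. $y[1, b]\perp \left<y\right>\sim \left<y\right>$). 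Your resulting relation ``$R\perp ya[1, b]\sim x\pi$'' is therefore unjustified; note it would force $\pi$ to be hyperbolic in every case, which is visibly too strong and does not occur in the paper's argument. The paper avoids all of this precisely because the cited theorem hands it a genuinely nonsingular relation: it adds $\pi$ to $R_1\perp R_2\perp\rho\sim x\pi$, so that the left side has dimension $14<16$ while the right side lies in $I^4_qF$, kills it by the Hauptsatz, then restores $\left<1, u, v\right>$ via $\rho\perp \left<1, u, v\right>\sim \left<1, u, v\right>$ and $[1, a]\perp \left<1\right>\sim \left<1\right>$ to get $\alpha\phi\sim y\left< \! \left<b, a\right] \! \right]\perp \left<1, u, v\right>$; a dimension count makes the right side isotropic, and a common value $w\in D_F(\left<1, u, v\right>)\cap D_F(y\left< \! \left<b, a\right] \! \right])$ together with the roundness of $\left< \! \left<b, a\right] \! \right]$ gives $\alpha\phi\cong wb[1, a]\perp \left<1, u, v\right>$, i.e. $w\tau\prec \alpha\phi$. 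Your alternative ending via \cite[Prop. 3.11]{Hoffmann:Laghribi2004} inherits the same unjustified cancellation, so it does not repair the argument.
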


\begin{proof} Obviously, if $\tau$ is dominated by $\phi$ up to a scalar, then $\phi_{F(\tau)}$ is isotropic. Conversely, suppose that $\phi$ is isotropic over $F(\tau)$. It follows from \cite[Th. 1.1]{Dolphin:Laghribi2017} that there exist $\alpha, \beta, u, v\in F^*$ and $R_1, R_2$ nonsingular quadratic forms of dimension $2$ such that $\alpha\phi\cong R_1\perp \left<1, u, v\right>$, $\beta \tau=R_2\perp \left<1\right>$ and
\begin{equation}
R_1\perp R_2 \perp \rho\sim x\pi,
\label{nsceq1}
\end{equation}where $x\in F^*$, $\rho$ is a nonsingular complement of $\left<1, u, v\right>$ and $\pi \in P_3F$ dominates $\tau$ up to a scalar.

Let us write $R_2=b[1, a]$ for suitable $a, b\in F^*$. Since $\pi_{F(\tau)}$ is isotropic, then $\pi \cong \left< \! \left<b,a\right] \! \right] \perp y\left< \! \left<b,a\right] \! \right]$ for some $y\in F^*$. Adding to both sides of equation (\ref{nsceq1}) the form $\pi$, we get
\begin{equation}
R_1 \perp \rho \perp [1, a]\perp y\left< \! \left<b,a\right] \! \right]\sim x\pi\perp \pi \in I^4_qF.
\label{nsceq2}
\end{equation}
Since the left hand side of equation (\ref{nsceq2}) is of dimension $14$, it follows from the Hauptsatz \cite[Prop. 6.4]{Laghribi2011} that
\begin{equation}
R_1 \perp \rho \perp [1, a]\perp y\left< \! \left<b,a\right] \! \right]\sim 0. 
\label{nsceq3}
\end{equation}
Adding to both sides of equation (\ref{nsceq3}) the form $\left<1, u, v\right>$ and using the equivalences $\rho \perp \left<1, u, v\right>\sim \left<1, u, v\right>$ and $[1, a]\perp \left<1\right>\sim \left<1\right>$, we get
\begin{equation}
\alpha\phi \sim  y\left< \! \left<b,a\right] \! \right]\perp \left<1, u, v\right>. 
\label{nsceq4}
\end{equation}
This implies that $y\left< \! \left<b,a\right] \! \right]\perp \left<1, u, v\right>$ is isotropic. Hence, there exists $w\in D_F(\left<1, u, v\right>)\cap D_F(y\left< \! \left<b,a\right] \! \right])$. Since $\left< \! \left<b, a\right] \! \right]$ is round and $\left<1, u, v\right>\cong \left<w,\cdots\right>$, equation (\ref{nsceq4}) becomes
\begin{equation}
\alpha\phi \sim  wb[1, a]\perp \left<1, u, v\right>. 
\label{nsceq5}
\end{equation}
Consequently, $\alpha\phi \cong  wb[1, a]\perp \left<1, u, v\right>$, and thus $w\tau$ is dominated by $\phi$.\qed
\end{proof}
\medskip
We get the following corollary.

\begin{cor}
Let $\phi$ be an anisotropic $F$-quadratic form of type $(1, 3)$, and $\tau$ an anisotropic $F$-quadratic form of type $(1, 1)$. If $\phi$ is $F(\tau)$-minimal, then it is a Pfister neighbor.
\label{c3}
\end{cor}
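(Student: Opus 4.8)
The plan is to deduce the corollary from Proposition \ref{pinsc} by contraposition, exactly as the minimality hypothesis is designed to interact with the domination relation. Assume that $\phi$ is $F(\tau)$-minimal; by the very definition of minimality this already forces $\phi_{F(\tau)}$ to be isotropic. Suppose, toward a contradiction, that $\phi$ is \emph{not} a Pfister neighbor, and aim to produce a proper dominated subform of $\phi$ that becomes isotropic over $F(\tau)$.

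Under this assumption all the hypotheses of Proposition \ref{pinsc} are in force: $\phi$ is an anisotropic form of type $(1,3)$, $\tau$ is an anisotropic form of type $(1,1)$, $\phi$ is not a Pfister neighbor, and $\phi_{F(\tau)}$ is isotropic. The proposition therefore supplies a scalar $e\in F^*$ such that $e\tau$ is dominated by $\phi$, that is, $e\tau \prec \phi$.

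The remaining step is to convert this domination into a violation of minimality. Since $\dim(e\tau)=\dim\tau=3<5=\dim\phi$, the form $e\tau$ is a dominated subform of $\phi$ of strictly smaller dimension. Moreover $\tau$ becomes isotropic over its own function field $F(\tau)$, so $(e\tau)_{F(\tau)}=e\,\tau_{F(\tau)}$ is isotropic as well, isotropy being preserved under scaling. Thus $e\tau$ is a form dominated by $\phi$, of dimension strictly less than $\dim\phi$, that is isotropic over $F(\tau)$, which contradicts the $F(\tau)$-minimality of $\phi$. Consequently $\phi$ must be a Pfister neighbor.

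I expect no real obstacle here: the substantive work is carried entirely by Proposition \ref{pinsc}, and the corollary is a formal consequence of it together with the definition of minimality. The only points requiring verification are the bookkeeping ones — the dimension inequality $3<5$ and the fact that isotropy of $\tau$ over $F(\tau)$ is unaffected by the scalar $e$ — both of which are immediate.
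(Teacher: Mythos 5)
Your proposal is correct and follows essentially the same route as the paper, which states Corollary \ref{c3} as an immediate consequence of Proposition \ref{pinsc} without writing out the deduction. Your contrapositive argument — if $\phi$ is not a Pfister neighbor, then Proposition \ref{pinsc} yields $e\tau \prec \phi$ with $\dim e\tau = 3 < 5 = \dim\phi$ and $(e\tau)_{F(\tau)}$ isotropic, contradicting $F(\tau)$-minimality — is exactly the omitted routine verification, including the correct bookkeeping that scaling by $e$ does not affect isotropy over $F(\tau)$.
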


\noindent{\bf Proof of Theorem \ref{tp4}.} Let $\phi$ be an anisotropic $F$-quadratic form of type $(1,3)$, and $\tau=a[1, b]\perp \left<1\right>$ an anisotropic quadratic form of type $(1,1)$.

-- Suppose that $\phi$ is $F(\tau)$-minimal. It follows from Corollary \ref{c3} that $\phi$ is a Pfister neighbor of a $3$-fold Pfister form $\pi$. Since $\phi_{F(\tau)}$ is isotropic, then $\pi_{F(\tau)}$ is isotropic as well. Hence, $\pi\cong \left< \! \left<c, a, b\right] \! \right]$ for some $c\in F^*$. Let $e\in F^*$ be such that $e[1, b]\subset \phi$. Hence, $\phi \cong e[1, b]\perp \ql(\phi)$. Suppose that $e\in D_F(a[1, b]).D_F(\ql(\phi))$. Let $x\in D_F([1, b])$ and $y\in D_F(\ql(\phi))$ be such that $e=axy$. Since $[1, b]\cong x[1, b]$, it follows that $e[1, b]\cong axy[1, b]\cong ay[1, b]$, and thus $y\tau$ is dominated by $\phi$, a contradiction.

-- Conversely, suppose that $\tau$ satisfies the conditions (i), (ii) and (iii) are described in the theorem. The conditions (i) and (ii) imply that $\phi_{F(\tau)}$ is isotropic. Suppose that $\phi$ is not $F(\tau)$-minimal. Hence, there exists a form $\psi$ of dimension $3$ or $4$ dominated by $\phi$ and isotropic over $F(\tau)$. The form $\psi$ is of type $(1,1)$ or $(1, 2)$. We use \cite[Th. 1.4(2), subsection 1.4]{Laghribi2002} when $\dim \psi=3$, and \cite[Th.1.4(2), subsection 1.3]{Laghribi2002} when $\dim \psi=4$ to conclude that $\tau$ is dominated by $\psi$. Hence, there exist $e, f, g\in F^*$ such that $\phi \cong ea[1, b]\perp \left<e, f, g\right>$. This contradicts (iii) because $ea\in D_F(a[1, b]).D_F(\ql(\phi))$.\qed

As we did for the other classifications, we give an example of a minimal form that uses Theorem \ref{tp4}:

\begin{example}
Let $F=\F_2(r, s, u)$ be the rational function field in the indeterminates $r, s, u$ over the field $\F_2$ with two elements. Let $\phi=rs[1, u]\perp \left<1, r, s\right>$, $\pi =\left< \! \left<r, s, u\right] \! \right]$ and $\tau=su[1, r+u]\perp \left<1\right>$. It is clear that $\phi$ is a Pfister neighbor of $\pi$. Moreover, $\pi\cong [1, r+u]\perp [r, ur^{-1}+1]\perp s[1,u]\perp rs[1, u]$, and thus $su\tau \subset \pi$. Hence, $\phi$ is isotropic over $F(\tau)$. Suppose that $\phi$ is not $F(\tau)$-minimal. There exists by Theorem \ref{tp4} a scalar $e\in F^*$ such that $e[1, r+u]\subset \phi$ and $e\in D_F(su[1, r+u]). D_F(\left<1, r, s\right>)$. Hence, using the roundness of $[1, r+u]$, we get $\phi \cong sut[1, r+u]\perp \left<1, r, s\right>$ for a suitable $t\in D_F(\left<1, r, s\right>)$. Let $M=\F_2(r, s)$ and consider the $M$-place $\lambda$ from $F$ to $M$ with respect to the $u$-adic valuation of $F$. We have:\\(1) $t$ is a unit for the $u$-adic valuation because $t\in D_F(\left<1, r, s\right>)$.\\(2) the form $\phi$ has nearly good reduction with respect to $\lambda$ (in the sense of Knebusch \cite[Page 289]{Knebusch1973}) since it is isometric to $\phi=rs[1, u]\perp \left<1, r, s\right>$ and $\left<1, r, s\right>_M$ is anisotropic. On the one hand, the total index of $\lambda_*(\phi)$ is equal to $1$ because $\lambda_*(rs[1, u])=[0,0]$ and $\left<1, r, s\right>_M$ is anisotropic. On the other hand, since $\phi$ contains $sut[1, r+u]$ as a subform, and $\lambda(\alpha)=0$ or $\infty$ for every $\alpha$ represented by $sut[1, r+u]$, we conclude by \cite[Prop. 3.4]{Knebusch1973} that the total index of $\lambda_*(\phi)$ is at least $2$, a contradiction.
\label{exa3}
\end{example}

\section{(Quasi-)Pfister neighbor forms}\label{qPf}
 
Our aim in this section is to relate the notions of quasi-Pfister neighbor and bilinear (strong)Pfister neighbor. This is useful to classify $F(\tau)$-minimal bilinear forms of dimension $5$ for $\tau$ a totally singular quadratic form of dimension $3$.

A {\it quasi-Pfister form} is a totally singular quadratic form $\widetilde{B}$ for some bilinear Pfister form $B$. A totally singular quadratic form $Q$ is called {\it quasi-Pfister neighbor} if there exists a quasi-Pfister form $\pi$ such that $2\dim Q>\dim \pi$ and $aQ\subset \pi$ for some $a\in F^*$. In this case, the form $\pi$ is unique, and for any field extension $K/F$ the form $Q_K$ is isotropic iff $\pi_K$ is isotropic. In particular, $Q_{F(\pi)}$ and $\pi_{F(Q)}$ are isotropic. 

The {\it norm field} of a nonzero totally singular quadratic form $Q$ is the field $N_F(Q):=F^2(\alpha\beta\mid \alpha, \beta \in D_F(Q))$, where $D_F(Q)$ is the set of scalars in $F^*$ represented by $Q$. The degree $[N_F(Q):F^2]$ is called the {\it norm degree} of $Q$ and it is denoted by $\nd_F(Q)$. Clearly, we have $\nd_F(Q)=2^d$ for some integer $d\geq 1$ and $\nd_F(Q)\geq \dim Q$. We refer to \cite[Section 8]{Hoffmann:Laghribi2004} for more details on the norm degree and some of its important applications.

Here is a characterization of quasi-Pfister neighbor using the norm degree.

\begin{prop}(\cite[Proposition 8.9(ii)]{Hoffmann:Laghribi2004}) 
Let $Q$ be an anisotropic totally singular quadratic form. Then, $Q$ is quasi-Pfister neighbor iff $2\dim Q>\nd_F(Q)$.
\label{pp1}
\end{prop}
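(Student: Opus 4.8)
The plan is to prove the equivalence $Q$ is quasi-Pfister neighbor iff $2\dim Q > \nd_F(Q)$ by relating the representation set $D_F(Q)$ to the norm field $N_F(Q)$, which is itself the set of nonzero values of the ambient quasi-Pfister form. The key structural fact I would exploit is that for a totally singular quadratic form over a field of characteristic $2$, isotropy and representation are governed entirely by $F^2$-linear algebra: anisotropy of $Q = \left<d_1,\dots,d_n\right>$ is equivalent to the $d_i$ being $F^2$-linearly independent, and $D_F(Q)\cup\{0\}$ is precisely the $F^2$-span of the $d_i$. Likewise, if $\pi$ is the quasi-Pfister form associated to $\left<\!\left<a_1,\dots,a_d\right>\!\right>_b$, then $D_F(\pi)\cup\{0\} = N_F(\pi) = F^2(a_1,\dots,a_d)$, a field of degree $2^d$ over $F^2$.

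First I would prove the forward direction. Suppose $Q$ is a quasi-Pfister neighbor, so there is a quasi-Pfister form $\pi$ with $aQ \subset \pi$ for some $a\in F^*$ and $2\dim Q > \dim\pi$. Since scaling by $a$ does not change the norm degree and $N_F(\pi)$ is the field $F^2(D_F(\pi))$, the containment $aQ\subset\pi$ forces $N_F(Q) = N_F(aQ) \subseteq N_F(\pi)$; but $\pi$ is itself a quasi-Pfister form so $N_F(\pi) = D_F(\pi)\cup\{0\}$ has $F^2$-dimension exactly $\dim\pi$, giving $\nd_F(Q) \leq \dim\pi < 2\dim Q$. Conversely, assume $2\dim Q > \nd_F(Q)$. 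I would set $N = N_F(Q)$, which is a field extension of $F^2$ of degree $2^d = \nd_F(Q)$, and choose an $F^2$-basis of $N$ of the form $a_1^{e_1}\cdots a_d^{e_d}$ with $e_i\in\{0,1\}$ for suitable generators $a_i\in D_F(Q)$; then $\pi := \left<\!\left<a_1,\dots,a_d\right>\!\right>_b$ has $\widetilde{\pi}$ a quasi-Pfister form of dimension $2^d = \nd_F(Q)$ with $N_F(\widetilde\pi)=N$. Since every $d_i\in D_F(Q)$ lies in $N = D_F(\widetilde\pi)\cup\{0\}$, anisotropy of $Q$ lets me realize $Q$ as a subform of $\widetilde\pi$ by picking out the corresponding basis vectors, and the dimension inequality $2\dim Q > \dim\widetilde\pi$ is exactly the hypothesis, so $Q$ is a quasi-Pfister neighbor.

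The main obstacle I anticipate is the careful bookkeeping in the converse: one must verify that the representation set $D_F(Q)$ genuinely embeds into $D_F(\widetilde\pi)$ as an $F^2$-subspace in a way compatible with the quadratic form structure, i.e.\ that $Q$ is a \emph{subform} (via the domination/subform relation for totally singular forms) and not merely that the value sets are contained. This requires the fact that for anisotropic totally singular forms a containment of $F^2$-spans of representation sets upgrades to an honest subform relation, which follows because a totally singular form is determined up to isometry by its set of represented elements together with its dimension, and anisotropy guarantees no collapse. I would invoke the subform criterion from \cite[Section 8]{Hoffmann:Laghribi2004} to make this step rigorous rather than recomputing it. Since this is precisely \cite[Proposition 8.9(ii)]{Hoffmann:Laghribi2004}, the cleanest route is to cite that reference directly; the sketch above records the argument one would reconstruct from the norm-degree formalism already set up in the preceding paragraphs.
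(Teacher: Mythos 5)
The paper offers no proof of this proposition at all—it is quoted directly from \cite[Proposition 8.9(ii)]{Hoffmann:Laghribi2004}—so your concluding decision to cite that reference is exactly what the paper does, and your reconstructed norm-field argument is the standard one behind that citation. The only blemish in the sketch is the unstated normalization in the converse: $D_F(Q)\subseteq N_F(Q)$ can fail when $1\notin D_F(Q)$ (e.g.\ $Q=\left<t\right>$ has $N_F(Q)=F^2$), so one must first replace $Q$ by $\alpha Q$ with $\alpha\in D_F(Q)$—harmless, since the quasi-Pfister neighbor definition already allows a scalar.
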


The norm degree also appears in the description of the Witt kernels for bilinear forms.

\begin{prop}(\cite[Theorem 1.2]{Laghribi2005}) 
Let $B$ be an anisotropic $F$-bilinear form and $Q$ an anisotropic totally singular form of norm degree $2^d$. If $B$ becomes metabolic over $F(Q)$, then $\dim B$ is divisible by $2^d$.
\label{pnd2}
\end{prop}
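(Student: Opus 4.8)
The plan is to deduce the statement from the computation of the full bilinear Witt kernel $W(F(Q)/F):=\ker\bigl(W(F)\to W(F(Q))\bigr)$; namely, to show that it coincides with the ideal $\langle\langle a_1,\dots,a_d\rangle\rangle_b\cdot W(F)$, where $a_1,\dots,a_d\in D_F(Q)$ are chosen so that $N_F(Q)=F^2(a_1,\dots,a_d)$ (after scaling $Q$ we may assume $1\in D_F(Q)$, and the norm degree is unchanged by scaling). Granting this, the divisibility of $\dim B$ is immediate: if $B$ is anisotropic and $[B]=\langle\langle a_1,\dots,a_d\rangle\rangle_b\cdot[C]$, then writing $C\cong\langle d_1,\dots,d_m\rangle_b$ one has $B\cong\bigl(d_1\langle\langle a_1,\dots,a_d\rangle\rangle_b\perp\cdots\perp d_m\langle\langle a_1,\dots,a_d\rangle\rangle_b\bigr)_{an}$. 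Since $\langle\langle a_1,\dots,a_d\rangle\rangle_b$ is an anisotropic round bilinear Pfister form (its associated quasi-Pfister form has norm field $N_F(Q)$ of degree $2^d$), any isotropy in this orthogonal sum is carried by a pair $d_i\langle\langle a_1,\dots,a_d\rangle\rangle_b\perp d_j\langle\langle a_1,\dots,a_d\rangle\rangle_b$ with $d_i/d_j\in D_F(\langle\langle a_1,\dots,a_d\rangle\rangle_b)$, which by roundness is metabolic and can be split off as a whole $2^d$-dimensional block. Hence the anisotropic part is again a multiple of the Pfister form, and $2^d\mid\dim B$.

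To compute the kernel I would first reduce to the case where $Q=\pi$ is the quasi-Pfister form $\langle\langle a_1,\dots,a_d\rangle\rangle$ itself. By Proposition \ref{pp1}, $Q$ is a quasi-Pfister neighbour of $\pi$, so both $\pi_{F(Q)}$ and $Q_{F(\pi)}$ are isotropic; this produces $F$-places $F(\pi)\rightharpoonup F(Q)$ and $F(Q)\rightharpoonup F(\pi)$. As a bilinear form defined over $F$ has good reduction and metabolicity specialises along $F$-places, $B_{F(Q)}$ is metabolic iff $B_{F(\pi)}$ is. Thus $W(F(Q)/F)=W(F(\pi)/F)$, and it suffices to treat $\pi$.

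For $\pi$ the argument runs by induction on $d$. The inclusion $\langle\langle a_1,\dots,a_d\rangle\rangle_b\,W(F)\subseteq W(F(\pi)/F)$ is clear, since the bilinear Pfister form becomes isotropic, hence metabolic, over $F(\pi)$. For the reverse inclusion I would use the explicit presentation $F(\pi)=F_0(\sqrt h)$, where $F_0=F(u_2,\dots,u_{2^d-1})$ is purely transcendental over $F$ and $h=c_{2^d}+\sum_{i=2}^{2^d-1}c_iu_i^2$ is a generic value of $\pi=\langle c_1,\dots,c_{2^d}\rangle$ (with $c_1=1$). A form anisotropic over $F$ stays anisotropic over the rational extension $F_0$, and over $F_0$ the kernel of the single inseparable quadratic extension $F_0(\sqrt h)/F_0$ is $\langle 1,h\rangle_b\,W(F_0)$; hence $[B]_{F_0}=\langle 1,h\rangle_b\cdot[C]$ for some $C$ over $F_0$. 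The core of the proof is then to descend this relation from $F_0$ back to $F$ and to upgrade the single binary factor $\langle 1,h\rangle_b$ into the full $d$-fold Pfister form. I would carry this out through the residue homomorphisms attached to the variables $u_2,\dots,u_{2^d-1}$ (the bilinear analogue of the residue exact sequence for $W(F_0)$), analysing the residues of $\langle 1,h\rangle_b\cdot[C]$ and matching the generic value $h$ with the Pfister structure of $\pi$; an inductive application over the residue fields, each carrying a quasi-Pfister form of norm degree $2^{d-1}$, finally places $[B]$ in $\langle\langle a_1,\dots,a_d\rangle\rangle_b\,W(F)$.

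The main obstacle is exactly this descent. The essential subtlety is that the \emph{generic} splitting field $F(\pi)$ must be used, not a naive multiquadratic splitting field such as $F(\sqrt{a_1},\dots,\sqrt{a_d})$: over the latter each $\langle 1,a_i\rangle_b$ already dies, so its Witt kernel strictly contains $\langle\langle a_1,\dots,a_d\rangle\rangle_b\,W(F)$ and would only yield the far weaker (and insufficient) divisibility by $2$. Controlling the interaction between the transcendentals $u_i$ and the descent, while tracking the anisotropy of the intermediate forms so that the inductive hypothesis applies with the correct norm degree $2^{d-1}$, is the delicate technical point; it is precisely here that the characteristic-$2$ residue theory for bilinear forms and the roundness of anisotropic bilinear Pfister forms are indispensable.
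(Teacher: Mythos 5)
Your plan hinges on the identification $W(F(Q)/F)=\left< \! \left<a_1, \cdots, a_d\right> \! \right>_b\cdot W(F)$ for a \emph{single} bilinear Pfister form attached to one chosen set of generators of $N_F(Q)$, and this identification is false; since the whole "immediate" deduction of the divisibility rests on it, the gap is fatal. In characteristic $2$ a bilinear Pfister form is not determined by its associated quasi-Pfister form, and the kernel contains \emph{every} $d$-fold bilinear Pfister form whose quasi-Pfister form has norm field $N_F(Q)$. Concretely, take $F=\F_2(a,b,c)$, $Q=\left<1,a,b\right>$ (so $d=2$ and $N_F(Q)=F^2(a,b)$), and $B=\left< \! \left<a,\, b+c^2a\right> \! \right>_b$. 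Since $c^2\in F^2$, one has $\widetilde{B}\cong \left< \! \left<a,b\right> \! \right>$, so $B$ becomes isotropic, hence metabolic, over $F(Q)$, i.e. $B\in W(F(Q)/F)$. On the other hand, $e^2(B+I^3F)=\frac{\dif a}{a}\wedge \frac{\dif (b+c^2a)}{b+c^2a}$ differs from $\frac{\dif a}{a}\wedge \frac{\dif b}{b}$ by $\frac{c^2}{b(b+c^2a)}\,\dif a\wedge \dif b\neq 0$ in $\Omega^2_F$ (note $\dif(c^2)=0$), whereas every class in $\left< \! \left<a,b\right> \! \right>_b W(F)$ is congruent modulo $I^3F$ to $0$ or to $\left< \! \left<a,b\right> \! \right>_b$, because $e\left< \! \left<a,b\right> \! \right>_b \perp \left< \! \left<a,b\right> \! \right>_b=\left< \! \left<e,a,b\right> \! \right>_b\in I^3F$ for every $e\in F^*$. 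By Kato's isomorphism $e^2$ this gives $B\notin \left< \! \left<a,b\right> \! \right>_b W(F)$. So the kernel is strictly larger than your principal ideal, and the descent step you yourself flag as "the main obstacle" (and do not carry out) cannot close, because it aims at a false target. Your own side remark about $F(\sqrt{a_1},\dots,\sqrt{a_d})$ shows you sensed the phenomenon, but it already occurs for the generic splitting field $F(Q)$.

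What is actually true --- and is the content of the cited result of Laghribi, which the present paper quotes without reproving --- is that $W(F(Q)/F)$ is generated by the forms $\left< \! \left<b_1, \cdots, b_d\right> \! \right>_b$ taken over \emph{all} tuples with $F^2(b_1,\dots,b_d)=N_F(Q)$, and that an anisotropic form in the kernel decomposes into scalar multiples of such (varying) Pfister forms; the divisibility $2^d\mid \dim B$ is exactly the nontrivial part of that theorem, not a corollary of roundness. Your roundness argument correctly handles isotropy inside a sum of multiples of one fixed Pfister form, but says nothing about isotropy occurring across multiples of two \emph{distinct} Pfister forms with the same norm field, which is where the real work lies (one argues by induction on $d$ through inseparable quadratic extensions such as $F(\sqrt{a_1})$, over which the norm degree drops to $2^{d-1}$, using the known kernel $\left< \! \left<a_1\right> \! \right>_bW(F)$ of that extension). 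Two smaller cautions: the reduction $W(F(Q)/F)=W(F(\pi)/F)$ via $F$-places is sound in spirit, but every point of a totally singular quadric is a singular point, so the existence of the required places needs a valuation-theoretic justification rather than smoothness; and your explicit presentation $F(\pi)=F_0(\sqrt{h})$ is correct and is indeed the right starting point, but it only yields the binary factor $\left<1,h\right>_b$ over $F_0$ --- the passage from there to a conclusion over $F$ is precisely the unproved (and, as stated, unprovable) part of your proposal.
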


A bilinear form $B$ is called a {\it Pfister neighbor} if $\widetilde{B}$ is a quasi-Pfister neighbor. This definition does not imply that $B$ is contained in a bilinear Pfister form up to a scalar. For example, over the rational functions field $F(t_1, t_2)$, the bilinear form $B=\left<1, t_1, t_2, 1+t_1t_2\right>_b$ is a Pfister neighbor because $\widetilde{B}\cong \left< \! \left<t_1,t_2\right> \! \right>$, but $B$ is not similar to a subform of a bilinear Pfister form since its determinant is not trivial. We refer to \cite{Laghribi2005} where bilinear Pfister neighbors are studied and some of their splitting properties are given.
\vskip1.5mm
A bilinear form $B$ is called a {\it strong Pfister neighbor}, shortened by SPN, if there exists a bilinear Pfister form $\rho$ such that $2\dim B>\dim \rho$ and $\alpha B\subset \rho$ for some $\alpha \in F^*$. In this case, the form $\rho$ is unique. In fact, if $B$ is a SPN of another bilinear Pfister form $\delta$, then there exists $\beta \in F^*$ such that $\beta B\subset \delta$. Hence, $\dim \rho =\dim \delta$ and  $i_W(\alpha \rho \perp \beta \delta)\geq \dim B>\frac{\dim \rho}{2}$, which implies that $\rho\cong \delta$ since the Witt index $i_W(\alpha \rho \perp \beta \delta)$ is always a power of $2$ \cite[Th. 3.7]{Laghribi2011}. Obviously, if $B$ is a SPN then it is a Pfister neighbor.
\vskip1.5mm
Recall from \cite{Kato1982} the Kato's isomorphism $e^n:I^nF/I^{n+1}F \longrightarrow \nu_F(n)$ given on generators by:$$e^n(\left< \! \left<a_1, \cdots, a_n\right> \! \right>_b+I^{n+1}F)=\frac{\dif a_1}{a_1}\wedge \cdots \wedge \frac{\dif a_n}{a_n}.$$

The symbol length (or simply the length) of an element $\theta\in \nu_F(n)$ is the smallest number of $n$-logarithmic symbols $\frac{\dif a_1}{a_1}\wedge\cdots \wedge\frac{\dif a_n}{a_n}$ needed to write it.

An Albert bilinear form is a $6$-dimensional bilinear form whose determinant is trivial.

\medskip

\begin{lem}
Let $\gamma$ be an Albert bilinear form and $\tau\in BP_2F$ be such that $\gamma \perp \tau \in I^3F$. Then, $\gamma$ is isotropic.
\label{ll2}
\end{lem}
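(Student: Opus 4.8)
The plan is to convert the hypothesis $\gamma\perp\tau\in I^3F$ into a divisibility obstruction supplied by the norm-degree Witt-kernel theorem (Proposition \ref{pnd2}). The starting observation is that an Albert bilinear form has even dimension and trivial determinant, so $\gamma\in I^2F$; the relation $\gamma\perp\tau\in I^3F$ then says exactly that $\gamma$ and $\tau$ coincide in $I^2F/I^3F$. I would exploit this after base change to the function field of the quasi-Pfister form $\widetilde\tau$ attached to $\tau$, where $\tau$ dies and only $\gamma$ survives.

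First I would dispose of the degenerate case. If $\tau$ is metabolic then $\gamma\sim\gamma\perp\tau\in I^3F$ while $\dim\gamma=6<8$, so the Hauptsatz (\cite[Prop. 6.4]{Laghribi2011}) forces $\gamma$ to be metabolic, hence isotropic. So I may assume $\tau=\left< \! \left<a,b\right> \! \right>_b$ is anisotropic and set $Q:=\widetilde\tau=\left< \! \left<a,b\right> \! \right>$, which is then an anisotropic quasi-Pfister form of norm degree $\nd_F(Q)=[F^2(a,b):F^2]=4$.

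Next I would base change to $K:=F(Q)$. Since $Q_K$ is isotropic and a bilinear Pfister form is anisotropic if and only if its associated quasi-Pfister form is (over every extension), the form $\tau_K$ is isotropic, hence metabolic. Therefore $\gamma_K\sim(\gamma\perp\tau)_K\in I^3K$, and as $\dim(\gamma_K)_{an}\le 6<8$ the Hauptsatz gives that $\gamma_K$ is metabolic; that is, $\gamma$ becomes metabolic over $F(Q)$.

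Finally I would feed this into Proposition \ref{pnd2}, applied to the anisotropic part $\gamma_{an}$ (the case $\gamma_{an}=0$ being trivial): since $\gamma_{an}$ becomes metabolic over $F(Q)$, its dimension is divisible by $\nd_F(Q)=4$. Were $\gamma$ anisotropic we would get $4\mid\dim\gamma=6$, which is absurd; hence $\gamma$ is isotropic. The one step needing genuine care --- the main obstacle --- is the middle one: justifying that $\tau$ really becomes metabolic over $F(\widetilde\tau)$, so that the $I^3$-membership passes to $\gamma_K$ alone, and checking that $Q$ keeps norm degree exactly $4$. Everything else is a dimension count feeding the Hauptsatz and the divisibility statement of Proposition \ref{pnd2}.
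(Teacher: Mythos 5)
Your proof is correct and follows essentially the same route as the paper's: the case of metabolic $\tau$ is settled by the Hauptsatz, and for anisotropic $\tau$ one passes to $F(\widetilde{\tau})$, where $\tau$ becomes metabolic, applies the Hauptsatz again to get $\gamma_{F(\widetilde{\tau})}$ metabolic, and then uses Proposition \ref{pnd2} to contradict $4\mid 6$. You merely make explicit the details the paper leaves implicit (that $\widetilde{\tau}$ has norm degree exactly $4$ and that $\tau$ dies over $F(\widetilde{\tau})$), so no further changes are needed.
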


\begin{proof} (1) If $\tau$ is isotropic, then it is metabolic, and thus $\gamma\in I^3F$. By the Hauptsatz, the form $\gamma$ is metabolic, in particular it is isotropic.

(2) If $\tau$ is anisotropic, then we get by the previous case that $\gamma_{F(\tau)}$ is metabolic. It follows from Proposition \ref{pnd2} that $\gamma$ is isotropic.\qed
\end{proof}

We give a characterization of SPN of dimension $5$. This looks like the characterization of $5$-dimensional quadratic Pfister neighbors (due to Knebusch in characteristic not $2$ \cite[Page 10]{Knebusch1977}, and the second author in characteristic $2$ \cite[Proposition 3.2]{Laghribi2002}).
\medskip
\begin{prop}
Let $B$ be an anisotropic $F$-bilinear form of dimension $5$. The following statements are equivalent:\\(1) $B$ is a SPN.\\(2) $B\cong a\left< \! \left<b, c\right> \! \right>_b \perp \left<d\right>_b$ for suitable $a, b, c, d\in F^*$.\\(3) The invariant $e^2(B\perp \left<\det B\right>_b+I^3F)$ has length $1$.
\label{p3}
\end{prop}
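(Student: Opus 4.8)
The plan is to establish the cyclic chain $(2) \Rightarrow (1) \Rightarrow (3) \Rightarrow (2)$, which proves the three statements equivalent. Throughout I would use freely that in characteristic $2$ the Witt ring $W(F)$ is $2$-torsion (so $-\eta=\eta$ and orthogonal sum computes the sum in $W(F)$), that an anisotropic bilinear form is diagonalizable, and the elementary but repeatedly used fact that every $4$-dimensional bilinear form of trivial determinant is, as a diagonal form, $\left<p,q,r,pqr\right>_b \cong p\left< \! \left<pq,pr\right> \! \right>_b$; that is, it is similar to a $2$-fold bilinear Pfister form. I would also note at the outset that $B \perp \left<\det B\right>_b$ is a $6$-dimensional bilinear form of trivial determinant, hence an Albert form lying in $I^2F$, so that $e^2(B\perp \left<\det B\right>_b + I^3F)$ is defined, and that ``length $1$'' means precisely that this class equals $e^2$ of a single anisotropic $2$-fold bilinear Pfister form.

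For $(2) \Rightarrow (1)$, writing $B \cong a\left< \! \left<b,c\right> \! \right>_b \perp \left<d\right>_b$, I would take $\rho = \left< \! \left<b,c,a^{-1}d\right> \! \right>_b$ and check that $a^{-1}B = \left< \! \left<b,c\right> \! \right>_b \perp \left<a^{-1}d\right>_b$ embeds in $\rho = \left< \! \left<b,c\right> \! \right>_b \perp a^{-1}d\left< \! \left<b,c\right> \! \right>_b$, since $a^{-1}d\left< \! \left<b,c\right> \! \right>_b$ represents $a^{-1}d$; as $2\dim B = 10 > 8 = \dim\rho$, this exhibits $B$ as a SPN.

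For $(1) \Rightarrow (3)$, from $\alpha B \subset \rho$ with $\rho$ a $3$-fold bilinear Pfister form I would first reduce to $B \subset \rho$: the invariant in (3) is unchanged under scaling $B$ by $\alpha$, because $\alpha B \perp \left<\det(\alpha B)\right>_b \cong \alpha(B \perp \left<\det B\right>_b)$ and multiplying an element of $I^2F$ by a scalar does not change its class modulo $I^3F$. Next I would argue $\rho$ is anisotropic: if it were metabolic, then writing $\rho \cong B \perp B_0$ with $\dim B_0 = 3$ gives $B \sim B_0$ in $W(F)$, impossible since $B$ is anisotropic of dimension $5$ while $\dim B_0 = 3$. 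Then in $W(F)$ I compute $B \perp \left<\det B\right>_b \sim \rho \perp \left(B_0 \perp \left<\det B_0\right>_b\right)$, using $\det B_0 \equiv \det B$ modulo squares; since $\rho \in I^3F$ and the $4$-dimensional trivial-determinant form $B_0 \perp \left<\det B_0\right>_b$ is similar to a $2$-fold Pfister form, its $e^2$-class is a single symbol. Finally I would rule out length $0$: if $B_0 \perp \left<\det B_0\right>_b \in I^3F$ then by the Hauptsatz it is metabolic, forcing $B \perp \left<\det B\right>_b \sim \rho$, which is absurd since the left side has dimension $6$ and the anisotropic $\rho$ has dimension $8$.

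The substantive direction is $(3) \Rightarrow (2)$, and the hard part is getting the isotropy that lets $B$ split off $\left<\det B\right>_b$; this is exactly what Lemma \ref{ll2} supplies. Concretely, the length-$1$ hypothesis and injectivity of $e^2$ give $B \perp \left<\det B\right>_b \perp \left< \! \left<b,c\right> \! \right>_b \in I^3F$ for some $b,c$, where $\gamma := B \perp \left<\det B\right>_b$ is an Albert form and $\left< \! \left<b,c\right> \! \right>_b \in BP_2F$; Lemma \ref{ll2} then forces $\gamma$ to be isotropic. Since $B$ is anisotropic, the isotropy must involve the $\left<\det B\right>_b$ slot, whence $\det B \in D_F(B)$ and $B \cong \left<\det B\right>_b \perp B_1$ with $B_1$ of dimension $4$ and trivial determinant. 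Writing $B_1 \cong \left<p,q,r,pqr\right>_b \cong p\left< \! \left<pq,pr\right> \! \right>_b$ then yields exactly the shape in (2), and $(2)\Rightarrow(1)$ closes the cycle. I expect the only delicate bookkeeping to be the determinant and scaling computations modulo squares and the careful separation of the length-$0$ from the length-$1$ case in $(1)\Rightarrow(3)$.
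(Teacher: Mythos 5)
Your proposal is correct, and while it uses exactly the paper's toolkit (Kato's isomorphism $e^2$, Lemma \ref{ll2}, the Hauptsatz, and the identity $\left<p,q,r,pqr\right>_b\cong p\left< \! \left<pq,pr\right> \! \right>_b$), it routes the equivalences through a genuinely different cycle. The paper proves (1) $\Rightarrow$ (2) $\Rightarrow$ (3) $\Rightarrow$ (1): Lemma \ref{ll2} is deployed in (1) $\Rightarrow$ (2) (from $x\pi\cong B\perp\left<y,z,yzd\right>_b$ one gets $B\perp\left<d\right>_b\perp d\tau\in I^3F$, hence isotropy of the Albert form $B\perp\left<d\right>_b$ and the splitting-off of $\left<d\right>_b$); (2) $\Rightarrow$ (3) is a direct computation, with length $0$ excluded via anisotropy of $\left< \! \left<b,c\right> \! \right>_b$; and (3) $\Rightarrow$ (1) passes from $B\perp\left<d\right>_b\perp\tau\in I^3F$ to $B\perp d\tau'\in I^3F$ and then invokes that this (necessarily anisotropic) $8$-dimensional form in $I^3F$ is similar to a $3$-fold bilinear Pfister form, which gives the SPN property directly. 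You instead run (2) $\Rightarrow$ (1) $\Rightarrow$ (3) $\Rightarrow$ (2): your (2) $\Rightarrow$ (1) is a trivial explicit embedding; your (1) $\Rightarrow$ (3) works with the $3$-dimensional complement $B_0$ of $B$ in $\rho$ rather than with the explicit shape in (2), computing $B\perp\left<\det B\right>_b\equiv B_0\perp\left<\det B_0\right>_b \bmod I^3F$ and excluding length $0$ by a Hauptsatz dimension count; and Lemma \ref{ll2} enters only in your (3) $\Rightarrow$ (2), where it splits off $\left<\det B\right>_b$ exactly as the paper does in its (1) $\Rightarrow$ (2). The net effect is that your route avoids the classification of anisotropic $8$-dimensional bilinear forms in $I^3F$ as forms similar to Pfister forms --- the one external input the paper's (3) $\Rightarrow$ (1) leans on without explicit citation --- at the cost of a slightly longer (1) $\Rightarrow$ (3), which must also verify anisotropy of $\rho$ (your metabolicity exclusion is fine) so that $B_0$ is anisotropic, hence diagonalizable. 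Your delicate bookkeeping all checks out: $\det(\alpha B)\equiv\alpha\det B$ and $\alpha\eta\equiv\eta\bmod I^3F$ for $\eta\in I^2F$ give the scaling invariance; $\det B_0\equiv\det B$ follows from triviality of $\det\rho$; and in (3) $\Rightarrow$ (2) the isotropy of $B\perp\left<\det B\right>_b$ with $B$ anisotropic does force $\det B\in D_F(B)$.
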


\begin{proof} Let $d\in F^*$ be such that $\det B=d.F^{*2}$.

(1) $\Longrightarrow$ (2) Suppose that $B$ is a SPN of $\pi\in BP_3F$. Then, we have $x\pi \cong B\perp \left<y, z, yzd\right>_b$ for suitable scalars $x, y, z\in F^*$. Hence, $B\perp \left<d\right>_b \perp d\tau \in I^3F$, where $\tau=\left< \! \left<dy, dz\right> \! \right>_b$. We conclude by Lemma \ref{ll2} that $B\perp \left<d\right>_b$ is isotropic, and thus $B\cong B'\perp \left<d\right>_b$ for some bilinear form $B'$ of dimension $4$ and trivial determinant, as desired.

(2) $\Longrightarrow$ (3) Suppose that $B\cong a\left< \! \left<b, c\right> \! \right>_b \perp \left<d\right>_b$. Clearly, we have $e^2(B\perp \left<d\right>_b+I^3F)=\frac{\dif b}{b}\wedge \frac{\dif c}{c}$, which is of length $1$ because the anisotropy of $\left< \! \left<b, c\right> \! \right>_b$ implies that $\frac{\dif b}{b}\wedge \frac{\dif c}{c}\neq 0$. 

(3) $\Longrightarrow$ (1) Suppose that $e^2(B\perp \left<d\right>_b+I^3F)$ has length $1$. Then, there exists an anisotropic $2$-fold bilinear Pfister form $\tau$ such that $e^2(B\perp \left<d\right>_b+I^3F)=e^2(\tau+I^3F)$. Hence, $B\perp \left<d\right>\perp \tau \in I^3F$ using the isomorphism $e^2$. Consequently, $B\perp d\tau' \in I^3F$, where $\tau'$ is the pure part of $\tau$. Then, $B\perp d\tau'$ is similar to a $3$-fold bilinear Pfister form, and thus $B$ is a SPN.\qed
\end{proof}

\section{On $K$-minimal bilinear forms up to dimension 5} 
Throughout this section we take $Q=\left<1, a, b\right>$ an anisotropic totally singular $F$-quadratic form of dimension $3$, and $K=F(Q)$ its function field. 

\begin{lem} (\cite[Lemma 3.7]{Laghribi2012}) Let $B$ be an anisotropic bilinear form over $F$. If $\psi=\left<a_1, \cdots, a_n\right>$ is a subform of $\widetilde{B}$, then there exists a bilinear form $C$ over $F$ such that $C\subset B$ and $\widetilde{C}\cong \psi$. Explicitly, we can take $C=\left<b_1, \cdots, b_n\right>_b$, where $b_i=a_i+\sum_{j=1}^{i-1}a_ix_i^2$ for suitable $x_1, \cdots, x_{i-1}\in F$ (read $b_1=a_1$).
\label{l1}
\end{lem}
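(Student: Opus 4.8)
The plan is to realize $\psi$ inside $B$ by a Gram--Schmidt orthogonalization, the essential point being a characteristic~$2$ phenomenon: the subform relation $\psi\subset\widetilde{B}$ furnishes vectors on which $B$ attains the prescribed diagonal values $a_i$, but these vectors are in general not $B$-orthogonal, and straightening them out will shift the diagonal entries from the $a_i$ to the claimed $b_i$.

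\emph{First}, I would extract vectors from the subform relation. Since $\psi=\langle a_1,\dots,a_n\rangle$ is a subform of $\widetilde B$, there is an $F$-linear injection of the space of $\psi$ into the space $V$ of $B$ compatible with the forms; evaluating it on a diagonalizing basis of $\psi$ yields linearly independent vectors $w_1,\dots,w_n\in V$ with $B(w_i,w_i)=\widetilde B(w_i)=a_i$. Let $W=\mathrm{span}_F(w_1,\dots,w_n)$. Because $B$ is anisotropic, $B(v,v)\neq 0$ for every nonzero $v$, so $B|_W$ is anisotropic and hence nondegenerate (a radical vector would be isotropic). Therefore $V=W\oplus W^\perp$ and $B\cong B|_W\perp B|_{W^\perp}$, so it suffices to diagonalize $B|_W$ through the $w_i$, and any diagonalization obtained this way is automatically a subform of $B$.

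\emph{Next}, I would orthogonalize. Set $w_1'=w_1$ and inductively $w_i'=w_i+\sum_{j<i}\mu_{ij}w_j'$ with $\mu_{ij}=B(w_i,w_j')/B(w_j',w_j')$; this is legitimate precisely because anisotropy keeps each denominator nonzero, and it produces an orthogonal basis of $W$. Writing $b_i=B(w_i',w_i')$ and using that in characteristic~$2$ the cross terms $2\mu_{ij}B(w_i,w_j')$ vanish and distinct $w_j'$ are mutually orthogonal, one obtains $b_i=a_i+\sum_{j<i}b_j\mu_{ij}^2$. Since each $b_j$ with $j<i$ already lies in the $F^2$-span of $a_1,\dots,a_j$, substituting rewrites this as $b_i=a_i+\sum_{j<i}a_jx_j^2$ for suitable $x_j\in F$, which is the asserted formula (with the summand indices corrected in the obvious way). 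I then put $C=\langle b_1,\dots,b_n\rangle_b=B|_W$, a subform of $B$ by the orthogonal splitting above.

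\emph{Finally}, I would verify $\widetilde C\cong\psi$. The relations $b_i=a_i+\sum_{j<i}a_jx_j^2$ exhibit the passage $(a_1,\dots,a_n)\mapsto(b_1,\dots,b_n)$ as a unipotent upper-triangular change of basis over $F^2$, so $\{b_1,\dots,b_n\}$ and $\{a_1,\dots,a_n\}$ span the same $F^2$-subspace of $F$; since anisotropic totally singular quadratic forms of equal dimension are isometric exactly when they represent the same scalars, this yields $\widetilde C=\langle b_1,\dots,b_n\rangle\cong\langle a_1,\dots,a_n\rangle=\psi$. I expect the main obstacle to be the point flagged at the outset: one cannot simply transport a diagonalization of the quadratic form $\widetilde B$ to the bilinear form $B$, so the argument must be organized around controlling the off-diagonal values $B(w_i,w_j)$ and checking that the resulting correction keeps the new diagonal within the correct $F^2$-span.
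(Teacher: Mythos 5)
Your proof is correct and is essentially the argument behind the lemma as quoted: the paper itself gives no proof (it cites \cite[Lemma 3.7]{Laghribi2012}), and the explicit formula $b_i=a_i+\sum_{j<i}a_jx_j^2$ in the statement is exactly what your characteristic-$2$ Gram--Schmidt recursion $b_i=a_i+\sum_{j<i}\mu_{ij}^2b_j$ produces after substitution, including your correct reading of the statement's index typo. One small simplification: the final isometry $\widetilde{C}\cong\psi$ needs no appeal to the classification of anisotropic totally singular forms by value sets, since the polar form of $\widetilde{B}$ vanishes, so $\widetilde{B}|_W$ is diagonal in \emph{every} basis of $W$, and the bases $(w_i)$ and $(w_i')$ give the diagonalizations $\langle a_1,\dots,a_n\rangle$ and $\langle b_1,\dots,b_n\rangle$ of the same form.
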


The following corollary is immediate.

\begin{cor}
Let $B$ be an anisotropic $F$-bilinear form. Then, $B$ is $K$-minimal iff $\widetilde{B}$ is $K$-minimal.
\label{c1}
\end{cor}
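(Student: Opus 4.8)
The plan is to prove both directions of the equivalence ``$B$ is $K$-minimal iff $\widetilde{B}$ is $K$-minimal'' by transferring minimality back and forth between the bilinear form $B$ and its associated totally singular quadratic form $\widetilde{B}$, using Lemma \ref{l1} as the bridge. The key observation I would exploit throughout is that $B$ and $\widetilde{B}$ have the same dimension, and that a bilinear form is anisotropic precisely when its associated quadratic form is anisotropic; moreover $B_K$ is isotropic iff $\widetilde{B}_K$ is isotropic (this is the standard relationship between a bilinear form in characteristic $2$ and its diagonal quadratic companion). So the first step is to record that $B$ becomes isotropic over $K$ exactly when $\widetilde{B}$ does, which handles the ``becomes isotropic over $K$'' clause of minimality simultaneously for both forms.

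First I would prove the direction ``$\widetilde{B}$ is $K$-minimal $\Longrightarrow$ $B$ is $K$-minimal''. Assume $\widetilde{B}$ is $K$-minimal and suppose for contradiction that $B$ is not; then there is a bilinear form $C$ dominated by $B$ (equivalently, since these are diagonalizable, a subform $C\subset B$ up to the usual manipulations) with $\dim C<\dim B$ and $C_K$ isotropic. Passing to associated quadratic forms, $\widetilde{C}$ is a subform of $\widetilde{B}$ with $\dim \widetilde{C}=\dim C<\dim B=\dim \widetilde{B}$, and $\widetilde{C}_K$ is isotropic because $C_K$ is. This would contradict the $K$-minimality of $\widetilde{B}$, so $B$ must be $K$-minimal. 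This direction is essentially routine once one matches subforms of $B$ with subforms of $\widetilde{B}$.

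The converse, ``$B$ is $K$-minimal $\Longrightarrow$ $\widetilde{B}$ is $K$-minimal'', is where Lemma \ref{l1} does the real work, and I expect this to be the main obstacle. Assume $B$ is $K$-minimal and suppose $\widetilde{B}$ is not; then there is a totally singular quadratic subform $\psi=\left<a_1,\dots,a_n\right>\subset \widetilde{B}$ with $n<\dim B$ and $\psi_K$ isotropic. The difficulty is that a subform of $\widetilde{B}$ need not obviously come from a subform of $B$, since the diagonalization of $B$ can change when one diagonalizes $\widetilde{B}$. This is exactly the content Lemma \ref{l1} supplies: it produces an explicit bilinear form $C=\left<b_1,\dots,b_n\right>_b$ with $C\subset B$ and $\widetilde{C}\cong\psi$. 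Since $\widetilde{C}\cong\psi$ is isotropic over $K$, so is $C_K$, and $\dim C=n<\dim B$, contradicting the $K$-minimality of $B$. Hence $\widetilde{B}$ is $K$-minimal.

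The only point needing care is to confirm that each candidate witness to non-minimality is anisotropic over $F$ (part of the definition of a dominated form used in testing minimality is that the forms involved are anisotropic over $F$): in each direction the produced subform is a subform of an anisotropic form over $F$, hence anisotropic over $F$, so this is automatic. I would therefore organize the write-up as the two implications above, invoking Lemma \ref{l1} in the harder direction and the matching of isotropy between $B$ and $\widetilde{B}$ over both $F$ and $K$ in both directions, concluding the equivalence.
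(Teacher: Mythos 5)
Your proof is correct and is exactly the argument the paper intends: the authors state the corollary is ``immediate'' from Lemma \ref{l1}, and your write-up just makes explicit the two routine implications --- passing from a subform $C\subset B$ to $\widetilde{C}\subset\widetilde{B}$ in one direction, and using Lemma \ref{l1} to lift a subform of $\widetilde{B}$ to a subform of $B$ in the other, together with the standard fact that (an)isotropy of a bilinear form over any extension coincides with that of its associated totally singular quadratic form. No gaps; your care about domination versus subform and anisotropy of the witnesses matches the paper's conventions.
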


We recall the isotropy results that we need for the classification of $K$-minimal bilinear forms of dimension at most $5$. 

\begin{thm} (\cite[Prop. 1.1 and Th. 1.2]{Laghribi:Rehmann2013})
Let $B$ be an anisotropic $F$-bilinear form such that $\dim B\leq 4$ or $\dim B=5$ and $\nd_F(\widetilde{B})=16$. Then, $B_K$ is isotropic iff $Q$ is similar to a subform of $\widetilde{B}$.
\label{t2}
\end{thm}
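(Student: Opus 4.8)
The plan is to reduce everything to totally singular quadratic forms and then to split the nontrivial implication according to whether $\widetilde{B}$ is a quasi-Pfister neighbor. First I would record the basic fact that, since $\widetilde{B}(v)=B(v,v)$, a bilinear form is anisotropic exactly when $\widetilde{B}$ is anisotropic and $B_K$ is isotropic exactly when $\widetilde{B}_K$ is isotropic; as $\dim B=\dim\widetilde{B}$, the hypotheses on $\dim B$ and $\nd_F(\widetilde{B})$ transfer verbatim to $\psi:=\widetilde{B}$, so it suffices to prove that $\psi_K$ is isotropic iff $Q$ is similar to a subform of $\psi$. The implication $(\Leftarrow)$ is free: if $cQ\subset\psi$ for some $c\in F^*$, then $\psi_K$ contains $cQ_K$, which is isotropic because $Q_K$ is isotropic by the very definition of $K=F(Q)$.

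For $(\Rightarrow)$ I would use $\nd_F(Q)=4$ together with the neighbor criterion of Proposition \ref{pp1}. Put $\pi_Q=\langle 1,a,b,ab\rangle$, so that $Q$ is a quasi-Pfister neighbor of $\pi_Q$ and $N_F(Q)=F^2(a,b)$. When $\psi$ is itself a quasi-Pfister neighbor (this covers $\dim\psi=3$ and $\dim\psi=4$ with $\nd_F(\psi)=4$), its norm form $\pi_\psi$ governs $F(Q)$-isotropy: by the neighbor property $\psi_{F(Q)}$ is isotropic iff $(\pi_\psi)_{F(Q)}$ is, and since $F(Q)$ and $F(\pi_Q)$ are isotropy-equivalent (both $(\pi_Q)_{F(Q)}$ and $Q_{F(\pi_Q)}$ are isotropic), this happens iff $(\pi_\psi)_{F(\pi_Q)}$ is isotropic; for anisotropic quasi-Pfister forms the subform theorem turns the latter into $N_F(\pi_Q)\subseteq N_F(\pi_\psi)$, i.e. $F^2(a,b)\subseteq N_F(\psi)$. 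Comparing the dimensions of the norm fields then pins down $\pi_\psi$ relative to $\pi_Q$, and a short extraction argument (for $\dim\psi=3$ this is the statement that all dimension-$3$ neighbors of a fixed $2$-fold quasi-Pfister form are similar) produces a scalar $c$ with $cQ\subset\psi$.

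The substantial case, which explains the hypothesis $\nd_F(\widetilde{B})=16$ in dimension $5$, is when $\psi$ is \emph{not} a quasi-Pfister neighbor (this is $\dim\psi=4$ with $\nd=8$, and decisively $\dim\psi=5$ with $\nd=16$), for then the norm form no longer controls isotropy and I would argue directly on the function field. Dehomogenizing $x^2+ay^2+bz^2=0$ presents $K=F(t)(\sqrt{\,b+at^2}\,)$ as an inseparable quadratic extension of the rational field $F(t)$, and since a totally singular form is additive in characteristic $2$ with $(p+q\sqrt{\,b+at^2}\,)^2=p^2+(b+at^2)q^2$, the isotropy of $\psi_K$ translates into the existence of a nonzero $R\in U[Y]$ with $(b+aY)R\in U[Y]$, where $U=\sum_i F^2c_i$ is the $F^2$-space attached to $\psi=\langle c_1,\dots,c_n\rangle$ and $Y=t^2$. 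If a minimal such $R$ can be taken constant, $R=r_0\in U\setminus\{0\}$, then $ar_0,br_0\in U$, whence $r_0Q=\langle r_0,ar_0,br_0\rangle\subset\psi$ and we are done.

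The main obstacle is precisely the claim that a minimal $R$ is constant. Writing $R=\sum_j r_jY^j$, the coefficientwise memberships $ar_{j-1}+br_j\in U$, together with the boundary relations $br_0,ar_d\in U$, yield in the quotient $F/U$ a chain coupling multiplication by $a$ with multiplication by $b$, and one must show that a chain of positive length cannot exist without already producing a nonzero $r\in U$ with $ar,br\in U$. This is governed by a norm-degree bookkeeping in the spirit of Proposition \ref{pnd2}: the interaction of $U$ with the degree-$4$ field $F^2(a,b)=N_F(Q)$ bounds the admissible chain length (transported, if convenient, through a bilinear lift via Lemma \ref{l1}), and I would combine this with the neighbor dichotomy of Proposition \ref{pp1} to force the length to be zero exactly when $\dim\psi\le4$ or when $\dim\psi=5$ and $\nd_F(\psi)=16$. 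The collapse genuinely fails for $\dim\psi=5$ with $\nd_F(\psi)=8$, namely for the $5$-dimensional quasi-Pfister neighbors, which are the minimal forms studied in the rest of the paper, and this is exactly why that case is excluded from the statement.
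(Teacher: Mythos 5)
Note first that the paper does not prove Theorem \ref{t2} at all: it is imported verbatim from Laghribi--Rehmann (the cited Prop.~1.1 and Th.~1.2), so your attempt can only be measured against that external argument, whose strategy is indeed the polynomial method you set up. Your reductions are sound: isotropy and anisotropy transfer between $B$ and $\widetilde{B}$, the implication $(\Leftarrow)$ is immediate, the case split via Proposition \ref{pp1} is correctly delimited (dimension $3$, and dimension $4$ with $\ndeg =4$, are the quasi-Pfister neighbor cases), and the translation of isotropy of $\psi=\widetilde{B}$ over $K=F(t)(\sqrt{b+at^2})$ into the existence of a nonzero $R\in U[Y]$ with $(b+aY)R\in U[Y]$, together with the constant-$R$ endgame $r_0Q\subset \psi$, is exactly right. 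In the neighbor case, the step ``$F(Q)$ and $F(\pi_Q)$ are isotropy-equivalent for a third form'' is not free in characteristic $2$ (there is no places argument to quote as in characteristic $0$); it is cleaner to invoke directly the known criterion that an anisotropic quasi-Pfister form becomes isotropic (equivalently quasi-hyperbolic) over $F(Q)$ iff $N_F(Q)\subseteq N_F(\pi_\psi)$, after which your norm-field dimension count and the transitivity of similarity on $3$-dimensional subforms of $\left< \! \left<a,b\right> \! \right>$ do finish that case.

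The genuine gap is the chain-collapse claim, which is the entire content of the hard direction, and your proposal does not prove it: it is deferred to ``norm-degree bookkeeping in the spirit of Proposition \ref{pnd2}'', but neither tool you name can deliver it. Proposition \ref{pnd2} is a Witt-kernel statement about \emph{metabolicity} over $F(Q)$ (divisibility of $\dim B$ by $2^d$) and says nothing about isotropy of a $4$- or $5$-dimensional form, where $2^d$ does not divide $\dim B$ anyway; Proposition \ref{pp1} is only the neighbor criterion you already used for the case split. What is actually needed, and missing, is an argument of the following shape: since the isotropy vector only involves the chain elements $r_0,\dots,r_d$ and $br_0,\ ar_{j-1}+br_j,\ ar_d$, one may replace $U$ by their span (any proper span gives a lower-dimensional subform of $\psi$ isotropic over $K$, allowing induction on $\dim\psi$); then, normalizing $r_0=1$ after scaling, the chain relations force $N_F(\psi)\subseteq F^2(a,b,r_1,\dots,r_d)$-type bounds, so for instance a chain with $d=1$ on a $5$-dimensional $\psi$ yields $\ndeg_F(\psi)\leq 8$, contradicting $\ndeg_F(\psi)=16$, while for $\dim\psi=4$, $\ndeg=8$, the dependence relation among the five chain elements forces $r_1\in F^2(a,b)$ and hence $\ndeg_F(\psi)\leq 4$, again a contradiction; the general $d$ requires an induction with exactly this dimension-versus-norm-degree count. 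That count is precisely where the hypothesis ($\dim\leq 4$, or $\dim=5$ with $\ndeg=16$) enters, and your closing observation that collapse fails for $5$-dimensional forms with $\ndeg=8$ confirms the hypothesis must be used -- but it is not a substitute for the argument. As written, your proposal establishes the easy direction and the neighbor case, and leaves the decisive step of [Laghribi--Rehmann, Th.~1.2] asserted rather than proved.
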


As a corollary we get:

\begin{cor}
Let $B$ be an anisotropic $F$-bilinear form of dimension $\leq 5$. If $B_K$ is isotropic, then $B$ is $K$-minimal when $\dim B=3$ or ($\dim B=5$ and $\nd_F(\widetilde{B})=8$).
\label{p4}
\end{cor}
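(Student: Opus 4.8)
The plan is to reduce everything to the isotropy criterion of Theorem \ref{t2}, transported from bilinear forms to their associated totally singular quadratic forms via Corollary \ref{c1} and Lemma \ref{l1}. By Corollary \ref{c1}, $B$ is $K$-minimal if and only if $\widetilde{B}$ is, so I would work throughout with the totally singular form $\widetilde{B}$ and test each of its proper subforms for isotropy over $K$. The basic observation is that $\dim Q=3$ and $\nd_F(Q)=4$; hence, by Theorem \ref{t2}, no anisotropic form of dimension $\leq 2$ can become isotropic over $K$, since that would force $Q$ to be similar to a subform of a form of dimension $<3$.

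First I would dispose of the case $\dim B=3$. Here every proper subform $\psi$ of $\widetilde{B}$ has $\dim\psi\leq 2$, so by the observation above $\psi_K$ remains anisotropic; as $B_K$ is isotropic by hypothesis, $B$ is $K$-minimal. This recovers the fact announced in the introduction that any anisotropic $3$-dimensional form which becomes isotropic over $K$ is automatically minimal.

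For the case $\dim B=5$ and $\nd_F(\widetilde{B})=8$, let $\psi$ be a proper subform of $\widetilde{B}$, so $\dim\psi\in\{1,2,3,4\}$. By Lemma \ref{l1} I may realize $\psi=\widetilde{C}$ for a bilinear subform $C\subset B$; this $C$ is anisotropic, and $C_K$ is isotropic if and only if $\psi_K$ is. As $\dim C\leq 4$, Theorem \ref{t2} applies and shows that $C_K$ is isotropic if and only if $Q$ is similar to a subform of $\psi$. The dimension observation rules out $\dim\psi\in\{1,2\}$. Moreover, if a $4$-dimensional $\psi$ contained a scalar copy $a_0Q$ of $Q$, then $a_0Q$ would already be a $3$-dimensional subform of $\widetilde{B}$; hence the whole matter reduces to proving that no $3$-dimensional subform of $\widetilde{B}$ is similar to $Q$, i.e. that $a_0Q\not\subset\widetilde{B}$ for every $a_0\in F^*$.

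Establishing this non-containment is the step I expect to be the main obstacle, and it is precisely where the hypothesis $\nd_F(\widetilde{B})=8$ must intervene. I would first use Proposition \ref{pp1}: a $5$-dimensional anisotropic totally singular form of norm degree $8$ is a quasi-Pfister neighbor of a quasi-Pfister form $\pi$ with $N_F(\pi)=N_F(\widetilde{B})$, so that the isotropy of $B_K$ is governed by $\pi_K$ rather than by any copy of $Q$ sitting inside $\widetilde{B}$. Since minimality is invariant under scaling $B$, I would assume for contradiction that $Q\subset\widetilde{B}$, write $\widetilde{B}\cong Q\perp\langle e,f\rangle$ for suitable $e,f\in F^*$, and examine the norm field $N_F(\widetilde{B})=F^2(a,b,\dots)$ generated by the pairwise products of the represented scalars, aiming to contradict the value $8$ of the norm degree together with the anisotropy of $\widetilde{B}$. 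Controlling this norm-field computation and eliminating the borderline configurations is the delicate part of the argument. Once $a_0Q\not\subset\widetilde{B}$ is secured, the reduction above shows that every proper subform of $\widetilde{B}$ stays anisotropic over $K$, and therefore $B$ is $K$-minimal.
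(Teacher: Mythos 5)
There is a genuine gap, and it sits exactly at the step you flag as the main obstacle: the claim that $\nd_F(\widetilde{B})=8$ together with anisotropy forces $a_0Q\not\subset\widetilde{B}$ is false. Take $F=\F_2(a,b,t,e)$, $Q=\left<1,a,b\right>$, and $B=t\left<1,a,b\right>_b\perp e\left<1,a\right>_b$. Then $\widetilde{B}=t\left<1,a,b\right>\perp e\left<1,a\right>$ is anisotropic (the scalars $t,ta,tb,e,ea$ are linearly independent over $F^2$), its norm field is $F^2(a,b,te)$, so $\nd_F(\widetilde{B})=8$, and $B_K$ is isotropic; yet the $3$-dimensional subform $t\left<1,a,b\right>_b$ already becomes isotropic over $K$, so $B$ is not $K$-minimal. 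Hence no norm-field computation, however delicate, can close your dimension-$5$ case: the implication you set out to prove does not hold. This is also visible from Theorem \ref{tp3}: if isotropy plus $\nd_F(\widetilde{B})=8$ sufficed for minimality, condition (ii) there would be superfluous.

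The source of the trouble is the reading of the corollary. Despite its wording, what the paper proves --- and what it later uses in the proof of Theorem \ref{tp3}, namely ``$B$ is $K$-minimal $\Rightarrow \nd_F(\widetilde{B})=8$'' --- is the \emph{necessity} statement: if $B$ of dimension $\leq 5$ is $K$-minimal, then $\dim B=3$ or ($\dim B=5$ and $\nd_F(\widetilde{B})=8$). The paper's proof therefore runs in the opposite direction to yours: assume $B$ is $K$-minimal; dimension $2$ is impossible because $B_K$ stays anisotropic; in dimension $4$, and in dimension $5$ with $\nd_F(\widetilde{B})=16$, Theorem \ref{t2} shows that isotropy of $B_K$ forces $Q$ to be similar to a subform of $\widetilde{B}$, and then Lemma \ref{l1} together with Corollary \ref{c1} produces a proper subform of $B$ isotropic over $K$, contradicting minimality. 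Only in dimension $3$ is sufficiency asserted, and that part of your argument (two-dimensional subforms cannot become isotropic over $K$, by Theorem \ref{t2}) is correct and agrees with the paper; your reduction of non-minimality in dimension $5$ to the existence of a $3$-dimensional subform similar to $Q$ is also sound, but it is where the argument must stop: for $5$-dimensional forms with $\nd_F(\widetilde{B})=8$ minimality is genuinely conditional, and its characterization is precisely the content of Theorem \ref{tp3}, not of this corollary.
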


\begin{proof} Suppose that $\dim B\leq 5$ and $B$ is $K$-minimal. The case $\dim B=2$ is excluded as $B_K$ is anisotropic.

-- If $\dim B=3$, then obviously $B$ is $K$-minimal since any subform of $B$ of dimension $2$ is anisotropic over $K$.

-- If $\dim B=4$, then $B_K$ is isotropic iff $Q$ is similar to a subform of $\widetilde{B}$ (Theorem \ref{t2}). Hence, $B$ is not $K$-minimal (Corollary \ref{c1}).

-- If $\dim B=5$. In this case, $\nd_F(\widetilde{B})\in \{8, 16\}$. If $\nd_F(\widetilde{B})= 16$, then $Q$ is similar to a subform of $\widetilde{B}$ (Theorem \ref{t2}). Hence, $B$ is not $K$-minimal when $\nd_F(\widetilde{B})=16$.\qed
\end{proof}

\begin{cor} Let $B$ be an anisotropic $F$-bilinear form of dimension $\leq 5$. If $B$ is isotropic over $K$ but not $K$-minimal, then $Q$ is similar to a subform of $\widetilde{B}$.
\label{cmin}
\end{cor}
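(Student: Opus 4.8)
The plan is to deduce this from the two isotropy results already established, Theorem \ref{t2} and Corollary \ref{p4}, by a short case analysis on $\dim B$ and, in the borderline case $\dim B=5$, on the norm degree $\nd_F(\widetilde{B})$. Assuming that $B_K$ is isotropic and that $B$ is not $K$-minimal, I would determine which of these two results governs the situation and then read off the conclusion directly.

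First I would dispose of the low-dimensional cases. If $\dim B\leq 2$, then $\widetilde{B}$ has dimension $\leq 2$, so $Q$ (of dimension $3$) cannot be similar to a subform of $\widetilde{B}$; by Theorem \ref{t2} this forces $B_K$ to be anisotropic, contradicting the hypothesis. Hence $\dim B\in\{3,4,5\}$. Next, Corollary \ref{p4} tells us that when $\dim B=3$, or when $\dim B=5$ and $\nd_F(\widetilde{B})=8$, isotropy of $B_K$ already makes $B$ a $K$-minimal form; since we have assumed $B$ is not $K$-minimal, these cases are excluded. Here I would recall that in dimension $5$ the norm degree is a power of $2$ satisfying $5\leq \nd_F(\widetilde{B})\leq 2^{4}=16$, so that $\nd_F(\widetilde{B})\in\{8,16\}$ and these are the only subcases to consider.

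There then remain exactly the cases $\dim B=4$ and $\dim B=5$ with $\nd_F(\widetilde{B})=16$. In both of these, Theorem \ref{t2} applies and asserts that $B_K$ is isotropic if and only if $Q$ is similar to a subform of $\widetilde{B}$. Since $B_K$ is isotropic by hypothesis, this yields that $Q$ is similar to a subform of $\widetilde{B}$, which is the desired conclusion. I do not expect any genuine obstacle here: the statement is essentially a repackaging of Theorem \ref{t2} and Corollary \ref{p4}, with the non-minimality hypothesis serving only to eliminate the two cases not covered by the ``iff'' of Theorem \ref{t2}. The only points demanding a little care are the exclusion of $\dim B\leq 2$ and the justification that $\nd_F(\widetilde{B})\in\{8,16\}$ when $\dim B=5$.
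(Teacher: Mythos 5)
There is a genuine gap, and it sits exactly in the case you dismissed: $\dim B=5$ with $\nd_F(\widetilde{B})=8$. You exclude it by reading Corollary \ref{p4} as saying that isotropy of $B_K$ forces $K$-minimality in that case, but that implication is false, and it is not what the paper's proof of Corollary \ref{p4} actually establishes: that proof only shows that a $K$-minimal form of dimension $\leq 5$ must have $\dim B=3$ or ($\dim B=5$ and $\nd_F(\widetilde{B})=8$) — it rules out dimensions $2$, $4$ and the norm degree $16$ case, and this is also the only direction used later (in the proof of Theorem \ref{tp3}). Indeed, if isotropy alone implied minimality in dimension $5$ with norm degree $8$, the conditions in Theorem \ref{tp3} would be vacuous. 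A concrete counterexample: over $F=F_0(a,b,c)$ take $B=\left<1,a,b,c,ab\right>_b$ and $Q=\left<1,a,b\right>$; then $\nd_F(\widetilde{B})=8$ and $B_K$ is isotropic, yet $B$ is not $K$-minimal since its subform $\left<1,a,b\right>_b$ is already isotropic over $K$. So the case ``$\dim B=5$, $\nd_F(\widetilde{B})=8$, $B_K$ isotropic, $B$ not $K$-minimal'' is nonvacuous, your proof concludes nothing there, and Theorem \ref{t2} cannot be applied to $B$ itself in that case since it requires norm degree $16$ in dimension $5$.

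The repair is the paper's own (two-line) argument, which needs no case analysis at all: since $B_K$ is isotropic but $B$ is not $K$-minimal, there is a proper subform $C\subset B$ with $C_K$ isotropic; then $\dim C\leq 4$, so Theorem \ref{t2} applies directly to $C$ and yields that $Q$ is similar to a subform of $\widetilde{C}$, hence of $\widetilde{B}$. Note the contrast with your strategy: applying Theorem \ref{t2} to $B$ forces you to discuss $\nd_F(\widetilde{B})$, whereas applying it to the smaller isotropic subform — which the non-minimality hypothesis hands you for free — makes the norm degree irrelevant. Your treatment of the other cases ($\dim B\leq 2$ via Theorem \ref{t2}, and $\nd_F(\widetilde{B})\in\{8,16\}$ when $\dim B=5$) is correct but unnecessary once this is done.
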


\begin{proof}
Since $B$ is not $K$-minimal, there exists $C$ a subform of $B$ such that $\dim C<\dim B$ and $C_K$ is isotropic. It follows from Theorem \ref{t2} that $Q$ is similar to a subform of $\widetilde{C}$.\qed
\end{proof}

\begin{lem}
Let $\pi_1\in BP_mF$ and $\pi_2\in BP_nF$ with $m\leq n$. Suppose that $\pi_1'$ is similar to a subform of $\pi_2$, where $\pi_1'$ denotes the pure part of $\pi_1$. Then, $\pi_2\cong \pi_1\otimes \tau$ for some $\tau \in BP_{n-m}F$.
\label{ll}
\end{lem}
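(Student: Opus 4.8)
The plan is to prove the lemma about bilinear Pfister forms: given $\pi_1\in BP_mF$ and $\pi_2\in BP_nF$ with $m\leq n$, and assuming the pure part $\pi_1'$ is similar to a subform of $\pi_2$, we want to factor $\pi_2\cong \pi_1\otimes \tau$ for some $\tau\in BP_{n-m}F$. First I would set up notation by writing $\pi_1\cong \left<\!\left<a_1,\dots,a_m\right>\!\right>_b$, so that its pure part is $\pi_1'\cong\left<a_1,\dots,a_m,\dots\right>_b$ of dimension $2^m-1$ (the orthogonal complement of the leading $\left<1\right>_b$). Since $\pi_1'$ is similar to a subform of $\pi_2$, there is a scalar $\lambda\in F^*$ such that $\lambda\pi_1'\subset \pi_2$; because $\pi_2$ is round, $\lambda\pi_2\cong\pi_2$, so after rescaling I may assume $\pi_1'\subset \pi_2$ outright.

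\medskip

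\textbf{The inductive core.} The natural strategy is induction on $m$. The base case $m=1$ reads: if $\pi_1=\left<\!\left<a_1\right>\!\right>_b=\left<1,a_1\right>_b$ and its pure part $\left<a_1\right>_b$ is a subform of $\pi_2$, then $a_1\in D_F(\pi_2)$, so by roundness $a_1\pi_2\cong\pi_2$; this means $\left<\!\left<a_1\right>\!\right>_b\otimes\pi_2\cong\pi_2\perp a_1\pi_2\cong 2\times\pi_2$, which is not quite the factorization but points the way. The cleaner formulation is that $\pi_2$ contains the $2$-dimensional Pfister form $\left<\!\left<a_1\right>\!\right>_b$ as a subform and hence factors as $\left<\!\left<a_1\right>\!\right>_b\otimes\tau$: this is the standard fact that a bilinear Pfister form containing a pure subform slot $\left<a\right>_b$ factors off $\left<\!\left<a\right>\!\right>_b$, which follows from roundness together with the common-slot/linkage description of Pfister forms. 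For the inductive step I would write $\pi_1\cong \pi_0\otimes\left<\!\left<a_m\right>\!\right>_b$ with $\pi_0\in BP_{m-1}F$, observe that the pure part of $\pi_0$ sits inside $\pi_1'$ hence inside $\pi_2$, apply the inductive hypothesis to get $\pi_2\cong\pi_0\otimes\sigma$ with $\sigma\in BP_{n-m+1}F$, and then locate $a_m$ (which is represented by $\pi_1'$ and hence by $\pi_2$) as a pure slot of $\sigma$ to split off one more factor $\left<\!\left<a_m\right>\!\right>_b$, yielding $\sigma\cong\left<\!\left<a_m\right>\!\right>_b\otimes\tau$ and therefore $\pi_2\cong\pi_1\otimes\tau$ with $\tau\in BP_{n-m}F$.

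\medskip

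\textbf{The main obstacle.} The delicate point is the step where I extract a single generator $a_m$ of $\pi_1$ as a pure slot of the already-factored form $\sigma$, making sure the remaining factors of $\pi_1$ are genuinely carried along rather than merely represented. The subtlety is that knowing $\pi_1'\subset\pi_2$ gives representation of each generator $a_i$ individually, but a Pfister factorization requires that the generators split off \emph{compatibly}, i.e. that after peeling off $\pi_0$ the element $a_m$ still behaves as an independent pure slot of $\sigma$ and is not absorbed into the part already used. I would resolve this by invoking roundness at each stage and the bilinear Pfister linkage property (any two bilinear Pfister forms sharing a common subform can be written with a maximal common factor), tracking that the dimension count $2^{n-m}$ for $\tau$ is forced. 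In characteristic $2$ the relevant roundness and subform-factorization statements for bilinear Pfister forms are available, so the argument goes through; the bookkeeping, rather than any conceptual gap, is where the real work lies.
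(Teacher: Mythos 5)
Your induction has genuine gaps at both ends, and they are not bookkeeping. First, the normalization is unjustified: from $\lambda\pi_1'\subset\pi_2$ you cannot ``rescale by roundness'' to $\pi_1'\subset\pi_2$, because roundness gives $\lambda\pi_2\cong\pi_2$ only when $\lambda\in D_F(\pi_2)$, which is not part of the hypothesis. Second, your base case $m=1$ is false as stated: the hypothesis that $\left<a_1\right>_b$ is \emph{similar} to a subform of $\pi_2$ is vacuous (take $\lambda=a_1^{-1}$, since $1\in D_F(\pi_2)$), while the conclusion $\pi_2\cong\left< \! \left<a_1\right> \! \right>_b\otimes\tau$ is restrictive; and the ``standard fact'' you invoke requires $a_1$ to be represented by the \emph{pure part} of $\pi_2$, not by $\pi_2$ itself. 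Mere representation $a_1\in D_F(\pi_2)$ only yields $a_1\pi_2\cong\pi_2$: for instance $1+b\in D_F(\left<1,b\right>_b)$ over $\F_2(b)$, yet $\left<1,b\right>_b\not\cong\left<1,1+b\right>_b$ since the determinants $b$ and $1+b$ differ modulo squares. (In fact the lemma itself degenerates at $m=1$; it is applied in the paper only with $m=2$, so an induction anchored at $m=1$ cannot work.) Third, the compatibility problem you flagged in the inductive step is precisely the hard content: after $\pi_2\cong\pi_0\otimes\sigma$, you would need to pass from $\pi_0'\perp a_m\pi_0\subset\pi_2$ (up to a scalar) to placing $a_m$ as a slot of $\sigma$, but Witt cancellation fails for symmetric bilinear forms in characteristic $2$, so subform containment does not transfer to a complement. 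The ``linkage property'' you appeal to, in the precise form needed here, is essentially the lemma you are trying to prove, so the argument is circular at its crucial point.

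The paper avoids all of this with a one-step argument that you never quite reach: since the characteristic is $2$, the form $\pi_1'\perp\pi_1'$ is metabolic (each $\left<c,c\right>_b$ is an isotropic plane), so from $\lambda\pi_1'\subset\pi_2$ one gets $i_W(\pi_1\perp\lambda^{-1}\pi_2)\geq 2^m-1$. By \cite[Th. 3.7]{Laghribi2011} the Witt index of such a sum is a power of $2$ and computes the linkage; since $2^m-1>2^{m-1}$ for $m\geq 2$ and the index is at most $\dim\pi_1=2^m$, it equals $2^m$, so $\pi_1$ and $\pi_2$ are $m$-linked and $\pi_2\cong\pi_1\otimes\tau$ with $\tau\in BP_{n-m}F$. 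If you want to salvage your approach, the theorem on Witt indices of sums of Pfister forms is the ingredient that replaces both your base case and your compatibility step; once you have it, the induction is superfluous.
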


\begin{proof}
We have $i_W(\pi_1\perp \alpha\pi_2)\geq 2^m-1$ for some $\alpha \in F^*$. It follows from \cite[Th. 3.7]{Laghribi2011} that this Witt index is equal to $\dim \pi_1$, and the forms $\pi_1$ and $\pi_2$ are $m$-linked, which means that $\pi_2\cong \pi_1\otimes \tau$ for some $\tau \in BP_{n-m}F$.\qed
\end{proof}

\section{Proof of Theorem \ref{tp3}} Let $B$ be an anisotropic bilinear form of dimension $5$, and $Q=\left<1, a, b\right>$ an anisotropic totally singular quadratic form of dimension $3$. Let $K=F(Q)$ be the function field of $Q$.\vskip1.5mm

(2) $\Longrightarrow$ (1) Suppose there exists an $F$-bilinear form $C$ of dimension $5$ which is a SPN of a bilinear Pfister form $\rho:=\left< \! \left< a, b, c\right> \! \right>_b$ and satisfies the two conditions:
\begin{itemize}
\item[(a)] $\widetilde{B}\cong \widetilde{C}$.
\item[(b)] For any $u, v\in F^2(a, b)$ such that $\left<u, v, uv\right>_b$ is similar to a subform of $\rho$, the invariant $e^2(C\perp \left<\det C\right>_b\perp\left< \! \left<u, v\right> \! \right>_b+I^3F)$ has length $2$.
\end{itemize}

Let $d\in F^*$ be such that $\det C=d.F^{*2}$. The form $C_K$ is isotropic because $\rho_K$ is isotropic and $C$ is a SPN of $\rho$. In particular, $B_K$ is isotropic. 

Suppose that $B$ is not $K$-minimal. Then, $C$ is not $K$-minimal as well because $\widetilde{B}\cong \widetilde{C}$ (Corollary \ref{c1}). It follows from Corollary \ref{cmin} that $Q=\left<1, a, b\right>$ is similar to a subform of $\widetilde{C}$. By Lemma \ref{l1}, we conclude that $p\left<1, a+q^2, b+ar^2+s^2\right>_b$ is a subform of $C$ for suitable $p\neq 0, r, s\in F$. In particular, $\left<a+q^2, b+ar^2+s^2, (a+q^2)(b+ar^2+s^2)\right>_b$ is similar to a subform of $\rho$, and thus our hypothesis (b) above implies that $e^2(C\perp \left<d\right>_b\perp\left< \! \left<a+q^2, b+ar^2+s^2\right> \! \right>_b+I^3F)$ has length $2$. 

Let $u=a+q^2$, $v=b+ar^2+s^2$. It is easy to see that $C\perp \left<d\right>_b \sim  p\left< \! \left<u, v\right> \! \right>_b \perp \tau$ for some $\tau\in GBP_2F$. Consequently, the invariant $e^2(C\perp\left<d\right>\perp \left< \! \left<u, v\right> \! \right>_b+I^3F)$ has length at most $1$, a contradiction.

(1) $\Longrightarrow$ (2) Suppose that $B$ is $K$-minimal. Then, we get by Corollary \ref{p4} that $\nd_F(\widetilde{B})=8$. It follows from Proposition \ref{p4} that $\widetilde{B}$ is quasi-Pfister neighbor of a quasi-Pfister form $\pi$. Since $\widetilde{B}_K$ is isotropic, it follows that $\pi_{K}$ is quasi-hyperbolic. Hence, $\pi \cong \left< \! \left<a, b, c\right> \! \right>$ for some $c\in F^*$ \cite[Th. 1.5]{Laghribi2004}. There exists a bilinear form $C$ of dimension $5$ similar to a subform of $\rho:=\left< \! \left<a, b, c\right> \! \right>_b$ such that $\widetilde{B}\cong \widetilde{C}$ (Lemma \ref{l1}). In particular, the form $C$ is a SPN of $\rho$. Modulo a scalar, we may write $C\cong \left< \! \left<x, y\right> \! \right>_b \perp \left<z\right>_b$ for suitable $x, y, z\in F^*$ (Proposition \ref{p3}). 

Let $u, v\in F^2(a, b)$ be such that $\left<u, v, uv\right>_b$ is similar to a subform of $\rho$. The form $\left<u, v, uv\right>$ is anisotropic because $\rho$ is anisotropic. On the one hand, the condition $u, v\in F^2(a, b)$ implies that $\left<1, u, v\right>$ becomes isotropic over $K$, which gives by Theorem \ref{t2} that $Q$ is similar to $\left<1, u, v\right>$ and thus $K=F(\left<1, u, v\right>)$. On the other hand, using Lemma \ref{ll}, we get $\rho\cong \left< \! \left<u, v, w\right> \! \right>_b$ for some $w\in F^*$. Hence, without loss of generality, we may suppose $\left< \! \left<a, b\right> \! \right>_b\cong \left< \! \left<u, v\right> \! \right>_b$ for the rest of the proof.

We have $e^2(C\perp \left<z\right>_d \perp \left< \! \left<a, b\right> \! \right>_b+I^3F)=\frac{\dif x}{x}\wedge \frac{\dif y}{y} + \frac{\dif a}{a}\wedge\frac{\dif b}{b}$. Suppose that this invariant has length $\leq 1$. Then, there exists a $2$-fold bilinear Pfister form $\tau$ such that $e^2(C \perp \left<z\right>_d\perp \left< \! \left<a, b\right> \! \right>_b+I^3F)=e^2(\tau+I^3F)$, which implies that $\left< \! \left<x, y\right> \! \right>_b \perp \left< \! \left<a, b\right> \! \right>_b \perp \tau \in I^3_qF$. It follows from Lemma \ref{ll2} that the Albert form $\left<a, b, ab, x, y, xy\right>_b$ is isotropic. Hence, the forms $\left< \! \left<x, y\right> \! \right>_b$ and $\left< \! \left<a, b\right> \! \right>_b$ are $1$-linked, meaning that $\left< \! \left<x, y\right> \! \right>_b\cong \left< \! \left<e, r\right> \! \right>_b$ and $\left< \! \left<a, b\right> \! \right>_b\cong \left< \! \left<f, r\right> \! \right>_b$ for suitable $e, f, r\in F^*$. By the uniqueness of the pure part of bilinear Pfister forms, we get $\left<a, b, ab\right>_b \cong \left<f, r, fr\right>_b$, and thus $K=F(\left<1, f, r\right>)$. We continue with some arguments similar to those used by Faivre in his proof. 

Hence, without loss of generality, we may suppose that $C\cong \left< \! \left<e, b\right> \! \right>_b \perp \left<z\right>_b$. So the form $C$ is a SPN of $\left< \! \left<e, b,z\right> \! \right>_b$. But $C$ is also a SPN of $\left< \! \left<a, b, c\right> \! \right>_b$, it follows that $\left< \! \left<e, b,z\right> \! \right>_b\cong \left< \! \left<a, b, c\right> \! \right>_b$. Adding in both sides the form $\left<1, b\right>_b$, we get$$\M(1)\perp \M(b)\perp \left<z, e, ez\right>_b\otimes \left<1, b\right>_b \cong \M(1)\perp \M(b)\perp \left<c, a, ac\right>_b\otimes \left<1, b\right>_b,$$where $\M(\alpha)$ denotes the metabolic plane given by the matrix $\begin{pmatrix}\alpha & 1\\1 & 0\end{pmatrix}$ for any $\alpha \in F$. By the uniqueness of the anisotropic part, we get$$\left<z, e, ez\right>_b\otimes \left<1, b\right>_b \cong \left<c, a, ac\right>_b\otimes \left<1, b\right>_b.$$Adding in both sides $a\left<1, b\right>_b$, we get$$a\left< \! \left<ea, b\right> \! \right>_b \perp z\left< \! \left<e, b\right> \! \right>_b \cong \M(a)\perp \M(ab)\perp c\left< \! \left<a, b\right> \! \right>_b.$$In particular, $a\left< \! \left<ea, b\right> \! \right>_b \perp z\left< \! \left<e, b\right> \! \right>_b$ is isotropic, and thus there exist $r\in D_F(\left< \! \left<ea, b\right> \! \right>_b)$ and $s\in D_F(\left< \! \left<e, b\right> \! \right>_b)$ such that $ar=zs$. We have
\begin{eqnarray*}
C & \cong & \left< \! \left<e, b\right> \! \right>_b \perp \left<z\right>_b\\& \cong & s\left< \! \left<e, b\right> \! \right>_b \perp \left<z\right>_b\\ & \cong & s\left<1, b\right>_b \perp D,
\end{eqnarray*}
where $D=es\left<1, b\right>_b \perp \left<z\right>_b$. Let $\beta:=as\left<1, b\right>_b\perp D$. Then, we have
\begin{eqnarray*}
\beta & \cong & as\left< \! \left<ea, b\right> \! \right>_b \perp \left<z\right>_b\\ & \cong & ars\left< \! \left<ea, b\right> \! \right>_b \perp \left<z\right>_b\\ & \cong & z\left< \! \left<ea, b\right> \! \right>_b \perp \left<z\right>_b.
\end{eqnarray*}
Hence, $\beta\cong \M(z)\perp \widetilde{\beta}\perp \left<zb\right>_b$, where $\widetilde{\beta}=zea\left<1, b\right>_b$. Now, we have
\begin{eqnarray*}
bC\perp \widetilde{\beta}& \sim & s\left< \! \left<e, b\right> \! \right>_b \perp \beta\\& \sim & s\left< \! \left<e, b\right> \! \right>_b \perp as\left<1, b\right>_b \perp es\left<1, b\right>_b \perp \left<z\right>_b\\& \sim & s\left< \! \left<a, b\right> \! \right>_b \perp \left<z\right>_b.
\end{eqnarray*}
This shows that $bC\perp \widetilde{\beta}$ is isotropic. Then, there exist bilinear forms $C_1$ and $C_2$ of dimension $4$ and $1$, respectively, such that $C_1\subset bC$, $C_2\subset \widetilde{\beta}$ and $C_1\perp C_2\cong s\left< \! \left<a, b\right> \! \right>_b \perp \left<z\right>_b$. Then, $i_W((C_1\perp C_2)_K)=2$ and thus $(C_1)_K$ is isotropic, meaning that $C$ is not $K$-minimal. Since $\widetilde{C}\cong \widetilde{B}$, it follows that $B$ is not $K$-minimal, a contradiction.\qed

Using Theorem \ref{tp2}, we provide an example of a $K$-minimal bilinear form of dimension $5$, where $K$ is the function field of a singular conic. The form we choose in our example  is inspired from \cite[Proposition 4.1]{CQM}.

\begin{example}
Let $F_0$ be a field of characteristic $2$, and $k=F_0(a, b, c)$ the rational function field in the indeterminates $a, b, c$ over $F_0$. Let $B=c\left<1, a+b\right>_b \perp b\left<1, a\right>_b \perp \left<1\right>_b$ and $Q=\left<1, a, c\right>$. Then, $B$ is $k(Q)$-minimal. 
\end{example}

\begin{proof}
Using the isometry $\left< \! \left<a, b\right> \! \right>_b\cong \left< \! \left<ab, a+b\right> \! \right>_b$, we get 
\begin{eqnarray*}
B\perp \left<a\right>_b \perp abc\left<1, a+b\right>_b & = & c\left< \! \left< ab, a+b\right> \! \right>_b \perp \left< \! \left<a, b\right> \! \right>_b\\ &\cong & c\left< \! \left< ab, a+b\right> \! \right>_b \perp \left< \! \left<ab, a+b\right> \! \right>_b\\& = & \left< \! \left<c, ab, a+b\right> \! \right>_b\\& \cong &\left< \! \left<a, b, c\right> \! \right>_b.
\end{eqnarray*}
Hence, $B$ is a SPN of $\left< \! \left<a, b, c\right> \! \right>_b$, and thus $B_K$ is isotropic. Moreover, $B$ is anisotropic over $k$ since $\left< \! \left<a, b, c\right> \! \right>_b$ is also anisotropic.

Let $u, v\in k^2(a, c)$ be such that $\left<u, v, uv\right>_b$ is a subform of $\left< \! \left<a, b, c\right> \! \right>_b$. We have 
$$e^2(B\perp \left<\det B\right>_b \perp \left< \! \left<u, v\right> \! \right>_b+I^3F)=\frac{\dif(a+b)}{a+b}\wedge\frac{\dif (bc)}{bc}+\frac{\dif u}{u}\wedge\frac{\dif v}{v}.$$If this invariant is of length $\leq 1$, then the Albert form $\left<u, v, uv, a+b, bc, (a+b)bc\right>_b$ is isotropic over $k$. Since the forms $\left<u, v, uv\right>_b$ and $\left<a+b, bc, (a+b)bc\right>_b$ are anisotropic over $k$, there exists $\alpha \in k^*$ represented by both forms. Let us write$$\alpha=uL^2+vM^2+uvN^2=(a+b)S^2+bcT^2+(a+b)bcU^2$$for suitable $L, M, N, S, T, U\in k$. Hence, we have$$b(S^2+cT^2+acU^2)=uL^2+vM^2+uvN^2+aS^2+c(bU)^2.$$The right hand side in this equality and the factor $S^2+cT^2+acU^2$ belong to $k^2(a, c)$, but since $b\not\in k^2(a, c)$, we necessarily have $S^2+cT^2+acU^2=0$. Since $\left<1, c, ac\right>$ is anisotropic over $k$, it follows that $S=T=U=0$ and thus $\alpha=0$, a contradiction. 

Consequently, $e^2(B\perp \left<\det B\right>_b \perp \left< \! \left<u, v\right> \! \right>_b+I^3F)$ is of length $2$. Since all the conditions of Theorem \ref{tp3} are satisfied (taking for $C$ the form $B$ itself), we conclude that $B$ is $k(Q)$-minimal.
\end{proof}
\medskip

\subsection*{\sc Acknowledgement.} This work was done in the framework of the project ``IEA of CNRS'' between the Artois University and the Academic College of Tel-Aviv-Yaffo. The two authors are grateful for the support of both institutions and CNRS.

\end{sloppypar}
\end{document}